\newtheorem{defn}{Definition}[section]
\newtheorem{theorem}[defn]{Theorem}
\newtheorem{prop}[defn]{Proposition}
\newtheorem{lem}[defn]{Lemma}
\newtheorem{fact}[defn]{Fact}
\newtheorem{cor}[defn]{Corollary}
\newtheorem{example}[defn]{Example}
\newtheorem{conj}[defn]{Conjecture}	
\newtheorem{que}[defn]{Question}
\newtheorem{prob}[defn]{Problem}
\def\zhs{{\mathbb Z}HS^3}
\def\lz{L{\mathbb Z}HS^3}
\begin{document}
\title{On the Alexander polynomial of lens space knot}
\author{Motoo Tange\\
\vspace{10pt}\\
  {\small Institute of Mathematics, University of Tsukuba Ibaraki \textup{305-8571}, Japan}\\
    {\small e-mail\textup{: \texttt{tange@math.tsukuba.ac.jp}}}}
\date{}
\maketitle 
\abstract
Ozsv\'ath-Szab\'o proved the property that any coefficient of Alexander polynomial of lens space knot is either $\pm1$ or $0$ and the non-zero coefficients are alternating.
Combining the formulas of the Alexander polynomial of lens space knots due to Kadokami-Yamada and Ichihara-Saito-Teragaito, we refine Ozsv\'ath-Szab\'o's property as the existence of simple curves included in a region in ${\Bbb R}^2$.
The existence of curves, that has no end-points connected, is just 1-component in a region, can search distribution of non-zero coefficients of the Alexander polynomial of the lens space knot.
This curve is much useful to obtain constraints of Alexander polynomials of lens space knots.
For example, we can investigate the location of the second, third and fourth non-zero coefficients.
The curve extracts new invariant $\alpha$-index.
The invariant is an important factor to determine Alexander polynomial of lens space knot.
We classify lens space surgeries that the Alexander polynomial is the same as a $(2,r)$-torus knot
and lens space surgeries with small genus and so on.
As well as lens space knots in $S^3$, we also deal with lens space knots in homology spheres, which the surgery duals are simple (1,1)-knots.
\footnote{Keyword: lens space surgery, Alexander polynomial, double-primitive knot, simple 1-bridge knot}
\footnote{MSC: 57M25,57M27}
\section{Introduction.}
\subsection{Lens space surgery.}%%%%%%%%%%%%%%%%%%%%%%%%%%%%%%%%%%%%%
\label{lssurgery}
Let $Y_r(K)$ denote an $r$-surgery along a knot $K$ in a $\zhs$ $Y$.
$\zhs$ stands for integral homology sphere.
We call the rational number $r$ {\it slope} of the Dehn surgery.
A lens space $L(p,q)$ is defined to be $p/q$-surgery of the unknot in $S^3$.
A knot $K\subset Y$ is called a {\it (positive) lens space knot} if a positive integral Dehn surgery of $K$ 
is a lens space.
In the same way, a knot $K\subset Y$ is called a {\it (positive) L-space knot} if a positive integral Dehn surgery of $K$ 
is an L-space.
Here an L-space is a rational homology sphere whose Heegaard Floer homology for any spin$^c$ structure is isomorphic to that of $S^3$.
The first examples of L-spaces are any lens spaces.
If $Y_p(K)$ is a lens space, we call the number $p$ {\it lens surgery slope}. 

Let $p$ be a positive integer.
The dual knot $\tilde{K}$ in $Y_p(K)$ is defined to be the core of the solid torus for $K$.
Then $[\tilde{K}]\in H_1(Y_p(K))$ is called {\it dual class}.
If $Y_p(K)$ is a lens space surgery on $\zhs$, 
we can assign an integer $k$ in $({\mathbb Z}/p{\mathbb Z})^\times$ to the dual class of the surgery, as defined in detail in Section~\ref{lsk}.
The number $k$ may assume $0<k<p/2$ and $(p,k)=1$.
The $k_2$ denotes the integer with $kk_2\pm1\bmod p$ and $0<k_2<p/2$.
Here we may assume $k\le k_2$ exchanging $k$ and $k_2$ if necessary.
Then we call $k_2$ {\it the second dual class}.
We call such a triple $(p,k,k_2)$ (also $(p,k)$) {\it (lens surgery) parameter}.

Let $p,k$ be relatively prime positive integers.
In Section~\ref{simple1bridgeknot}, we will define a positive lens space knot $K_{p,k}$
in a $\zhs$ $Y_{p,k}$ (the surgery dual is a simple 1-bridge knot in a lens space).
The $p$-surgery of $K_{p,k}$ is $L(p,k^2)$ and its lens surgery parameter is $(p,k)$.

In this paper we mainly deal with the following two types of lens space knots:
\begin{itemize}
\item Lens space knot in an $\lz$.
\item $K_{p,k}$ in $Y_{p,k}$.
\end{itemize}
Here, $\lz$ stands for L-space $\zhs$.
Furthermore, for such lens surgery $Y_p(K)$ we always assume that 
$$2g(K)-1\neq p,$$
even if it is not mentioned in each statement.
We recall the following two facts.
\begin{fact}
\label{fact1}
Let $p,k$ be relatively prime positive integers.
\begin{enumerate}
\item $K_{p,k}$ is a double-primitive knot in $Y_{p,k}$.
\item Any double-primitive knot is a lens space knot.
\end{enumerate}
\end{fact}
Immediately, we understand that $K_{p,k}$ is a lens space knot in $Y_{p,k}$.
The definition of double-primitive knot will be done in Section~\ref{simple1bridgeknot}.
These facts are proven in \cite{B} in the case of lens space surgery on $S^3$.
$K_{p,k}$ is a double-primitive knot in a $\zhs$.
An integral surgery of any double-primitive knot in a $\zhs$ produces
two solid tori.
This means the integral surgery of a double-primitive knot gives a lens space.

Conversely, the following is known as {\it Berge conjecture} due to Gordon in \cite{Kirby}.
\begin{conj}[Berge conjecture \cite{Kirby}]
\label{Bconj}
Any lens space knot in $S^3$ is a double-primitive knot.
\end{conj}
\begin{conj}[\cite{tan1}]
\label{ponconj}
If $K$ is a lens space knot $K$ in an $\lz$ with $2g(K)-1<p$ and surgery parameter $(p,k)$,
then the $\lz$ is $S^3$ or $\Sigma(2,3,5)$ and $K$ is isotopic to $K_{p,k}$.
Furthermore, any lens space knot $K_{p,k}$ in $\Sigma(2,3,5)$ is 
in one of the 20 families in \cite{tan1}.
\end{conj}

Here we write the inclusion relationship as below:
\begin{eqnarray*}
\{\text{Double primitive knots in $S^3$}\}&\subset& \{\text{Lens space knots in }S^3\}\\
&\subset&\{\text{Lens space knots in $S^3$ or $\Sigma(2,3,5)$}\}\\
&\subset&\{\text{Lens space knots in L-space ${\mathbb Z}HS^3$'s}\}
\end{eqnarray*}

The assertion that the first $``\subset"$ would be actually $``="$ is Conjecture~\ref{Bconj}. 
The statement that the third $``\subset"$ would be actually $``="$ is 
the first statement in Conjecture~\ref{ponconj}.
We describe the inclusion relationship between Berge knots and $K_{p,k}$ in $\zhs$.
Berge knots are defined to be knots consisting of 10 families in \cite{B}.
\begin{eqnarray*}
\{\text{Berge knots}\}&=&\{\text{$K_{p,k}\subset Y_{p,k}=S^3$}\}\\
&\subset&\{\text{$K_{p,k}\subset Y_{p,k}$}|(p,k)=1\}%=:\{\text{Type-(B) knots}\}
\end{eqnarray*}
The first equality is proven in \cite{G2}.

\subsection{Lens space knot and its Alexander polynomial}%%%%%%%%%%%%%%%%%%%%%%%%
Any $(r,s)$-torus knot $T(r,s)$ is a typical example of lens space knot.
The restrictions related to lens space knot $K$ affect the Alexander polynomial $\Delta_K(t)$ in many cases.
We define the following:
\begin{defn}
We call the Alexander polynomial of a lens space knot in $Y$ {\it lens surgery polynomial} in $Y$.
We call $\Delta_{T(r,s)}$ {\it a torus knot polynomial}.
\end{defn}
Hence, torus knot polynomial is lens surgery polynomial in $S^3$.
Throughout this paper, we use a symmetrized polynomial as $\Delta_K(t)$.
%L-space knots are much restricted among all the knots.
%Hence, the Alexander polynomials of lens space knots have been also restricted.
In \cite{[10]}, \cite{3}, and \cite{tan2}, there are many results for lens surgery polynomials.
Here we introduce the following theorem by Ozsv\'ath-Szab\'o (in the case of L-space knot in $S^3$) and by Ichihara-Saito-Teragaito (in the case of $K_{p,k}$).
\begin{theorem}[Ozsv\'ath-Szab\'o \cite{3}, Ichihara-Saito-Teragaito \cite{IST}]
\label{OS1}
Suppose that $K$ is an L-space knot in $S^3$ or $K_{p,k}$ in $Y_{p,k}$.
Then the Alexander polynomial of $K$ is of form 
\begin{equation}
\label{OS}
\Delta_K(t)=(-1)^r+\sum_{j=1}^{r}(-1)^{j-1}(t^{n_{j}}+t^{-n_{j}}),
\end{equation}
for some decreasing sequence of positive integers $d=n_1>n_2>\cdots>n_r>0$.
\end{theorem}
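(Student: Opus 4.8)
Both parts are proved with knot Floer homology: the case of an L-space knot $K\subset S^{3}$ is the model (this is the original Ozsv\'ath--Szab\'o argument from \cite{3}), and the case of the dual $K_{p,k}$ of a simple $1$-bridge knot is then obtained either by running the same argument in the ambient homology sphere or, following \cite{IST}, by a direct computation from a genus-$1$ Heegaard diagram. The plan is as follows.

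\emph{Step 1: rank bounds from an L-space surgery.} Suppose $K\subset S^{3}$ admits a positive L-space surgery; then every sufficiently large integral surgery $S^{3}_{N}(K)$ is also an L-space, and the large-surgery formula gives $HF^{+}(S^{3}_{N}(K),[s])\cong H_{*}(A^{+}_{s})$ for $|s|\le N/2$, where $A^{+}_{s}=C\{\max(i,j-s)\ge 0\}$ is the corresponding subquotient of $CFK^{\infty}(K)$. Since each spin$^{c}$ summand of $HF^{+}$ of an L-space is a single tower $\mathcal{T}^{+}$, we get $H_{*}(A^{+}_{s})\cong\mathcal{T}^{+}$, i.e.\ the reduced part vanishes for all $s$. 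First I would use this to show $\operatorname{rk}\widehat{HFK}(K,j)\le 1$ for every $j$: a rank $\ge 2$ in some Alexander grading forces extra torsion in $H_{*}(A^{+}_{s})$ for $s$ near $j$. Since $a_{j}=\chi\,\widehat{HFK}(K,j)$, this yields $a_{j}\in\{0,\pm1\}$, and genus detection ($\widehat{HFK}(K,g)=\mathbb{Z}$, which holds as L-space knots are fibered, and $\widehat{HFK}(K,j)=0$ for $|j|>g$) identifies the top degree with $g$, so $n_{1}=d=g$.

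\emph{Step 2: from rank bounds to the staircase and the alternating signs.} Because $\widehat{HF}(S^{3})=\mathbb{Z}$, the filtration differentials on $\bigoplus_{j}\widehat{HFK}(K,j)$ must cancel all but one generator; with every graded piece of rank $\le1$ this is possible only if the nonzero groups sit in the rigid configuration of a \emph{staircase complex}, whose Maslov gradings are then determined. The outer corners of a staircase alternate between ``local maximum'' and ``local minimum'' type, which flips the sign of the Euler characteristic from one nonzero Alexander grading to the next; this produces the signs $(-1)^{j-1}$ in $(\ref{OS})$. Together with the symmetry $\Delta_{K}(t)=\Delta_{K}(t^{-1})$ and the normalization $\Delta_{K}(1)=1$ --- which forces the leading nonzero coefficient to be $+1$, hence the constant term $(-1)^{r}$ --- this gives the displayed form.

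\emph{Step 3: the dual of a simple $1$-bridge knot, and the main obstacle.} For $K_{p,k}$ the defining integral surgery is a lens space, hence an L-space; the large-surgery formula and the rank estimate of Step~1 hold over any homology sphere, so when the ambient manifold is an L-space homology sphere the same reasoning applies verbatim. In general I would instead follow \cite{IST}: compute $\Delta_{K_{p,k}}$ directly from a genus-$1$ doubly-pointed Heegaard diagram (equivalently, by Fox calculus on the two-generator one-relator presentation coming from the $(1,1)$-structure), obtaining $\Delta_{K_{p,k}}$ as an explicit alternating sum of lattice-point contributions, and then show that after cancellation only the coefficients $0,\pm1$ remain and that they alternate. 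In both routes the real work is exactly this last passage --- upgrading the qualitative bound $|a_{j}|\le1$ to the precise staircase/alternating pattern --- since a priori the surgery formula (respectively the $(1,1)$ computation) produces groups of rank $\ge2$ (respectively monomials with large coefficients), and one must prove the differential (respectively the combinatorial cancellation) is forced to be maximal. I expect this sign-and-cancellation bookkeeping to be the crux.
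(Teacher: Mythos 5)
This theorem is not proved in the paper at all: it is quoted as an external result of Ozsv\'ath--Szab\'o \cite{3} (for L-space knots) and Ichihara--Saito--Teragaito \cite{IST} (for duals of simple $1$-bridge knots), and the paper's own contributions all take (\ref{OS}) as a starting point. So there is no internal argument to compare yours against; the relevant comparison is with the cited references, and your outline does faithfully reproduce their strategies: the large-surgery formula and the tower condition $H_*(A_s^+)\cong\mathcal{T}^+$ for the L-space case, and the genus-one doubly-pointed diagram / Fox-calculus computation (which is exactly the formula recorded as Theorem~\ref{ISTformula} in this paper) for the $(1,1)$-dual case.

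Two caveats on the substance of the sketch. First, in Step 2 the inference ``rank $\le 1$ in each Alexander grading plus total homology $\mathbb{Z}$ forces the staircase, hence the alternation'' is not valid as stated: those two facts alone do not determine the relative Maslov gradings of the surviving generators, and it is precisely the Maslov gradings that produce the signs $(-1)^{j-1}$. In \cite{3} the gradings are pinned down by using the tower condition on $H_*(A_s^+)$ (equivalently on the hat-version complexes $\hat{A}_s$) for \emph{all} $s$ simultaneously, via an induction relating consecutive $s$; the rank bound and the grading determination come out of the same induction rather than the rank bound coming first and the staircase following formally. Second, your Step 1 parenthetical is slightly off: genus detection $\widehat{HFK}(K,g)\neq 0$ holds for every knot and does not rely on fiberedness (Ni's fiberedness theorem gives the stronger $\widehat{HFK}(K,g)\cong\mathbb{Z}$, which you do use). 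Neither point is fatal --- you correctly identify where the real work lies and defer it to the references --- but as a standalone proof the proposal has a genuine gap exactly at the passage from $|a_j|\le 1$ to the alternating pattern.
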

The Alexander polynomial of $T(r,s)$ is computed by $\Delta_{T(r,s)}=t^{-(r-1)(s-1)/2}(t^{rs}-1)(t-1)/(t^r-1)(t^s-1)$.
Hence, this polynomial satisfies (\ref{OS}).
%$K_{p,k}$ is a double primitive knot in a ${\mathbb Z}HS^3$ $Y_{p,k}$ which is parametrized by integer  pair with $(p,k)=1$ and in particular $K_{p,k}$ is a lens space knot whose $p$-surgery is a lens space.
%Here $(p,k)$ is the greatest common divisor of $p,k$.
The form of the Alexander polynomial conditions in Theorem~\ref{OS1} are rewritten as follows:
\begin{equation}
\label{OScond}
\begin{cases}\text{The absolute values of coefficients are zero or one.\ (Flat)}\\
\text{The non-zero coefficients alternate in sign.\ (Alternating).}
\end{cases}
\end{equation}
It is well-known in \cite{YN} and \cite{3} that any L-space knot in any $S^3$ or $K_{p,k}$ is fibered.
Thus, in these cases the Seifert genus $g(K)$ for such a knot coincides with the degree $d(K)$ of $\Delta_K(t)$.
Any L-space knots in $Y$ ($Y$:$\lz$) with $Y-K$ irreducible is fibered (read p.545 in \cite{tan6}).
If $K$ is a lens space knot in an $\lz$ $Y$, then $Y-K$ is clearly irreducible, thus in such a case $K$ is also fibered and $d(K)=g(K)$ holds.
%As a conjecture, the result in \cite{YN} would be generalized to any lens surgery in any $\lz$ (with $2g(k)-1<p$).
%A result in \cite{3} also shows that $K_{p,k}$ is fibered.

\subsection{Main question and results.}%%%%%%%%%%%%%%%%%%%%%%%%%%%
\label{resultssection}
Suppose that $K$ is a lens space knot in an $L{\mathbb Z}HS^3$ or $K_{p,k}$.
The following is our main question.
\begin{que}
\label{mainq}
How are the non-zero coefficient terms of $\Delta_K(t)$ of a lens space knot $K$ distributed?
\end{que}
This question is regarded as the question for a refinement of Theorem~\ref{OS1}.
In the present section we introduce a series of main results related to Question~\ref{mainq}.
\subsubsection{Non-zero curve}
In Section~\ref{nonzero} we define {\it non-zero curve} for a lens space knot.
This curve is a complete embedding of several ${\mathbb R}$ (this means there are no end points in ${\mathbb R}^2$).
The curve presents distribution of non-zero coefficients.
We prove the following fundamental properties.
The necessary terminologies will be defined  in Section~\ref{nonzero}.
\begin{theorem}
\label{non-zerocor}
Let $K$ be a lens space knot in an $L{\mathbb Z}HS^3$ or $K_{p,k}$ for relatively positive integers $p,k$.
There is one non-zero curve only in each non-zero region.
\end{theorem}
The existence of non-zero curve satisfying such properties naturally induces flat and alternating conditions of Alexander polynomial.
The distribution of non-zero coefficients of lens surgery polynomials are controlled by {\it non-zero curve} on ${\mathbb R}^2$.
It is proven in Lemma~\ref{contained} that each curve lies in a non-zero region,
which has a ${\mathbb Z}$-action on it.
By the action the curve is also invariant.

All the results which are introduced below follow form this theorem.
For example, we can immediately give the following corollary coming from the existence of non-zero curve.
\begin{cor}
\label{thenumber}
Let $K$ be a lens space knot in an $\lz$ or $K_{p,k}$ for relatively prime positive integers $p,k$.
If $2r+1$ is the number of the non-zero coefficients in $\Delta_K(t)$, then
we have
$$k_2\le 2r+1.$$
\end{cor}
\subsubsection{Non-zero sequence, $\alpha$-index, and adjacent sequence.}
Let $\{n_i\}$ be non-zero exponents (exponents of non-zero coefficients) 
in a flat and alternating Laurent symmetric polynomial as in (\ref{OS}).
We call the decreasing sequence $(d=n_1,\cdots,n_r,n_{r+1}=0)$ {\it half non-zero sequence} and
the sequence $(d=n_1,n_2,\cdots,n_{2r},n_{2r+1}=-d)$ {\it (full) non-zero sequence}, where $n_{2r+2-i}=-n_i$.
Let $K$ be a knot whose Alexander polynomial is flat and alternating.
$NS_h(K)$ and $NS(K)$ denote the half non-zero sequence and the full non-zero sequence of $\Delta_K(t)$ respectively.
The decreasing sequences construct a flat and alternating polynomial uniquely.

For example, $NS_h(3_1)=(1,0)$ and
\begin{equation}
\label{237pret}
NS(Pr(-2,3,7))=(5,4,2,1,0,-1,-2,-4,-5),
\end{equation}
where $Pr(k_1,k_2,k_3)$ is the $(k_1,k_2,k_3)$-pretzel knot.

Computing lens surgery polynomial, we know that the coefficients $1$ and $-1$ are adjacent in this order for some region from the top term.
To measure the region which adjacent coefficients $1,-1$ appear in the coefficients of a lens surgery 
polynomial we define {\it $\alpha$-index}.
\begin{defn}[$\alpha$-index]
We assume that $K$ is a knot which $\Delta_K(t)$ is flat, alternating, and $\Delta_K(t)\neq 1$.
We define the $\alpha$-index of $K$ with non-zero sequence $NS(K)=(n_1,\cdots, n_{2r+1})$ to be
$$\alpha(K)=\max\{n_1-n_{2j+1}|n_{2i-1}-n_{2i}=1,\ 1\le\forall i\le j\le r\}.$$
If $\Delta_K(t)=1$, then $\alpha(K)=0$.
\end{defn}
For example, $\alpha(Pr(-2,3,7))=7$ holds.
%This index $\alpha(K)$ measures the length of the interval in which this ordered pair of the non-zero coefficients $1,-1$ are adjacent.
This $\alpha$-index extracts one of the most important information in lens surgery polynomial, as mentioned as below.
%We call the range $[\alpha, d]\cap {\Bbb Z}$ the {\it adjacent range}.
\begin{theorem}
\label{a22a2cor}
Let $\alpha_0$ be a positive integer.
If $K$ is a lens space knot in an $\lz$ or $K_{p,k}$ with $\alpha(K)=\alpha_0$ and with surgery slope $p$, then the following inequality holds
$$p\le \alpha_0^2+2\alpha_0+2.$$

As a result, as long as we consider lens space knots in an $\lz$ or $K_{p,k}$, the lens surgery polynomials that the $\alpha$-index can be less than or equal to $\alpha_0$  are finitely many.
\end{theorem}
Namely, the forms of lens surgery polynomial with $\alpha(K)\le \alpha_0$ have finite variations only.
Of course, the genus of a lens space knot has the same property.
However, $\alpha$-index can determine more detailed form of lens surgery polynomial than genus (it coincides with the top degree).
As a corollary, genus of lens surgery polynomials with a fixed $\alpha$-index has an upper bound.
The $\alpha$-index has the natural limitation $0\le \alpha(K)\le 2g(K)$.
By the definition the condition of $\alpha(K)=0$ is equivalent to what $K$ is the unknot.
Since $\alpha(K)=1$ is not satisfied naturally for any of such a lens space knot $K$, any non-trivial lens space knot satisfies with $2\le \alpha(K)\le 2g(K)$.
We classify the lens surgery polynomials with $\alpha(K)=2$ (Corollary~\ref{trefoilorollary}) and $\alpha(K)=2g(K)$ (Theorem~\ref{2ntorusknot}).
In the former case, immediately the exponent of the fourth non-trivial term of the lens surgery polynomials of the form $\Delta_K(t)=t^g-t^{g-1}+t^{g-2}+\cdots$ ($g=g(K)$) is determined (Corollary~\ref{trefoilorollary}).

The non-zero sequence with $\alpha(K)=\alpha_0$ satisfies with
$$n_{2i-1}=n_{2i}+1\ (1\le i\le s-1),\ n_{2s-1}=n_1-\alpha_0.$$
Namely, among the region $n_1\ge x\ge n_{2s-1}$ the non-zero coefficients $\pm1$ are
adjacent, and we call the region {\it adjacent region}
and $AS(K)$ denotes $(n_1,n_3,\cdots,n_{2s-1})$ and it is called {\it the adjacent sequence} and the integer $s$ is called {\it adjacent length}.
For example, $AS(Pr(-2,3,7))=(5,2,0,-2)$ and adjacent length is $4$.

%If the Alexander polynomial $\Delta_K$ can be expanded as follows
%$$\Delta_K(t)=t^{d_1}-t^{d_1-1}+t^{d_2}-t^{d_2-1}+\cdots+t^{d_{\gamma-1}}-t^{d_{\gamma-1}-1}+t^{d_{\gamma}}-\cdots,$$
%then $d_1-d_{\gamma}+1\ge \max\{|k_2|,k\}$.

%In this article, any knot of (A), or (B) is called {\it type-(A)}, or {\it type-(B)} knot.
%The type-(B) knot is parametrized by coprime positive integers $(p,k)$ and denote by $K_{p,k}$.

\subsubsection{Lens space knots with torus knot polynomial}%%%%%%%%%%%%%%%%%%%%%%
\label{lensspaceknotresult}
We classify the lens space knots that the lens surgery polynomials are the same ones as the torus knot polynomials $\Delta_{T(2,2g+1)}$.
\begin{theorem}
\label{2ntorusknot}
Let $K$ be a lens space knot in an $L{\mathbb Z}HS^3$ $Y$ with surgery parameter $(p,k,k_2)$.
Let $g$ be the genus of $K$.
Then the following conditions are equivalent each other:
\begin{enumerate}
\item $\Delta_K(t)=\Delta_{T(2,2g+1)}(t)$.
\item The lens surgery parameter of $Y_p(K)=L(p,q)$ is $(p,2)$.
\item The lens surgery parameter $(p,k)$ can be realized by the surgery of $(2,2g+1)$-torus knot
\item $k_2=2g$ or $k_2=2g+1$.
\item $\alpha(K)=2g$
\end{enumerate}
\end{theorem}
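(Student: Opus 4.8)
\quad The plan is to establish the cycle $(2)\Leftrightarrow(3)$, $(2)\Rightarrow(1)\Rightarrow(4)$ and $(4)\Rightarrow(2)$, so that all four statements become equivalent. Throughout we use that $g(K)=d$ for type-(A) and type-(B) knots, and the dictionary of Section~\ref{parameter}: $\Delta_K(t)$ depends only on $(p,k)$, and the pair $(p,\{k,k_2\})$ is a complete invariant of the lens surgery realization of $Y_p(K)=L(p,q)$. Recall that $\Delta_{T(2,2d+1)}(t)=\sum_{j=-d}^{d}(-1)^{d-j}t^{j}$ has all $2d+1$ coefficients non-zero, so $(1)$ is equivalent to $NS_h(K)=(d,d-1,\dots,1,0)$, i.e.\ to $\Delta_K(t)$ being as dense as (\ref{OScond}) permits.

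The equivalence $(2)\Leftrightarrow(3)$ is a computation. By the classical description of surgeries on torus knots, the only integral lens space slopes of $T(2,2d+1)$ are $p=2(2d+1)\pm1$, that is $p=4d+1$ or $p=4d+3$, and from the standard $(2,2d+1)$-position of the knot on the genus-one Heegaard torus one reads off that the dual knot of such a surgery has homology class $2$ in $H_1(L(p,q))$; thus the surgery parameter of $T(2,2d+1)$ is $(p,2,2d+1)$ with $p\in\{4d+1,4d+3\}$, up to the usual reflection $k_2\leftrightarrow-k_2\leftrightarrow p-k_2$. Hence ``$(p,k,k_2)=(p,2,2d+1)$'' and ``realized by a surgery on $T(2,2d+1)$'' coincide, by uniqueness of the homological realization. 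Now $(2)\Rightarrow(1)$ is immediate: $T(2,2d+1)$ realizes the parameter $(p,2,2d+1)$ and $\Delta_K(t)$ is a function of $(p,k)$ alone, so $\Delta_K(t)=\Delta_{T(2,2d+1)}(t)$. And $(2)\Rightarrow(4)$ is immediate: for $p=4d+3$ the representative of $k_2$ in $(-p/2,p/2)$ has $|k_2|=2d+1=2g+1$, while for $p=4d+1$ it has $|k_2|=2d=2g$, which accounts for the two alternatives in $(4)$.

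The substance of the theorem is $(1)\Rightarrow(4)$ and $(4)\Rightarrow(2)$, proved uniformly for types (A) and (B) by analysing the non-zero curve. Place $\frak{A}=\{a_{-d},\dots,a_{d}\}$ on the vertical lines of $\Z^2$ with the prescribed shifts by $k$ and $|k_2|$; by Theorem~\ref{non-zerocor} the non-zero entries form a connected, simple, weakly monotone traversable curve. Assuming $(1)$, each vertical line carries a block of $2d+1$ consecutive non-zero lattice points of vertical extent $2d$. If $|k_2|\le 2d-1$, a block and the $|k_2|$-shifted block on the adjacent line overlap in at least two rows, forcing a repeated lattice point (contradicting simplicity) or a step reversing the monotonicity; if $|k_2|\ge 2d+2$, the two blocks are separated by an empty row that no traversable step can cross (contradicting connectedness). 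Hence $|k_2|\in\{2d,2d+1\}=\{2g,2g+1\}$, which is $(4)$. Conversely, assume $(4)$ and $k\le|k_2|$: then $|k_2|$ exceeds the vertical extent $2d$ of a single block by at most one, so connectedness forces the non-zero curve to be a single monotone staircase that meets a copy of every row $-d,\dots,d$; thus no coefficient vanishes and $\Delta_K(t)=\Delta_{T(2,2d+1)}(t)$. By the dictionary of Section~\ref{parameter}, together with Theorem~\ref{main} which pins the small shift at $k=2$, the parameter of $K$ is then that of $T(2,2d+1)$, namely $(p,2,2d+1)$, which is $(2)$.

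The main obstacle is the geometric core of $(1)\Leftrightarrow(4)$: turning ``the shifted blocks overlap too much, resp.\ lie too far apart'' into a rigorous argument in terms of the exact placement rule for $\frak{A}$ on $\Z^2$ and the admissible traversable steps of Theorem~\ref{non-zerocor}, and verifying that both borderline values $|k_2|=2d$ and $|k_2|=2d+1$ actually occur (they are precisely the cases $p=4d+1$ and $p=4d+3$; the hypothesis $k\le|k_2|$ is exactly what prevents $k$ and $k_2$ from exchanging roles). A secondary difficulty is the last step of $(4)\Rightarrow(2)$: upgrading ``$\Delta_K$ dense and $|k_2|\in\{2g,2g+1\}$'' to the exact parameter $(p,2,2d+1)$, where one combines Theorem~\ref{main} with the fact that a lens space carrying a simple knot of homology class $2$ has $p$ odd, forcing $p\in\{4d+1,4d+3\}$.
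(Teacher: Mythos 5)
Your skeleton (the cycle $(2)\Leftrightarrow(3)$, $(2)\Rightarrow(1)$, $(1)\Rightarrow(4)$, $(4)\Rightarrow(2)$) is reasonable, and your $(4)\Rightarrow(1)$ argument and the ``blocks too far apart'' half of $(1)\Rightarrow(4)$ do match what the paper proves in Corollary~\ref{k22g1} and the corollary following Theorem~\ref{non-zerocor}. But the core of $(1)\Rightarrow(4)$ has a genuine gap in the overlap case $|k_2|\le 2d-1$. When all $2d+1$ coefficients are $\pm1$ and the block in column $i+1$ overlaps the block in column $i$ in several rows, there is no ``repeated lattice point'' and no forced reversal of monotonicity: if the two columns disagree in sign at one overlap row you get $|dA_{i,j}|=2$ (this is exactly how the paper kills the single case $|k_2|=2g-1$), but if they agree at one row then, by alternation, they agree on the whole overlap, $dA$ vanishes there, and nothing visibly breaks. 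The paper's actual mechanism is arithmetic, not local: it first uses the traversable theorem to force $p-|k_2|\ge 2g+1$, then reads off from the alternating string $dA(x)=e,\,dA(x+k)=-e,\dots$ that $|k_2|$ is even or equals $2g+1$, and finally invokes Lemma~\ref{0number} (the gaps between consecutive vertical $\{1,-1\}$ pairs in the $dA$-function differ by at most one) to pin down $p<3|k_2|$ and $p=|k_2|+2g+1$ or $|k_2|+2g+2$, whence $|k_2|\in\{2g,2g+1\}$. Your dichotomy never brings the period $p$ into play, and without it the intermediate values of $|k_2|$ are not excluded.

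The last step of $(4)\Rightarrow(2)$ is also not established. Theorem~\ref{main} says $n_2=d-1$; it does not ``pin the small shift at $k=2$,'' and the remark that a lens space carrying a simple knot of class $2$ has $p$ odd presupposes what you are proving. In the paper, $k=2$ (equivalently $|k_2|=(p-1)/2$) falls out of the same counting: once $p=4g+1$ or $4g+3$ and $|k_2|=2g$ or $2g+1$ are known, the congruence $kk_2\equiv\pm1\bmod p$ together with $\gcd(p,k)=1$ leaves only $k=2$ (the case $|k_2|<4$, i.e.\ $|k_2|=2$, being handled separately by $k|k_2|=p-1$). You should either reproduce that computation or cite Corollary~\ref{k22g1}, which contains it.
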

From this theorem, if the knot $K$ satisfies one of the conditions holds,
the lens space surgery parameters are $(4g+1,2,2g)$ or $(4g+3,2,2g+1)$.
The parameters are realized by $T(2,2g+1)$.
On the definition of realization, read Definition~\ref{rlzd}.
\begin{theorem}
\label{DT22r+1}
Let $p,k$ be the coprime positive integers.
If $\Delta_{K_{p,k}}(t)=\Delta_{T(2,2g+1)}(t)$ holds for any integers $g$, then $Y_{p,k}=S^3$ and $K_{p,k}=T(2,2g+1)$.
\end{theorem}
Further, we can find the following examples.
\begin{theorem}
\label{rmk1}
For coprime integers $p,k$ the following statements hold.
\begin{enumerate}
\item There exists a knot $K_{p,k}$ in a non-L-space ${\mathbb Z}HS^3$ $Y_{p,k}$ such that $\Delta_{K_{p,k}}=\Delta_{T(r,s)}$ for relatively prime integers $(r,s)$ and 
the complement $Y_{p,k}-K_{p,k}$ is not homeomorphic to $S^3- T(r,s)$.
\item There exists a knot $K_{p,k}$ in a non-L-space $Y_{p,k}$ such that $\Delta_{K_{p,k}}$ is not the lens surgery polynomial in an $\lz$.
\end{enumerate}
\end{theorem}
For example, $Y_{10,3}=\Sigma(2,3,7)$ and $\Delta_{K_{10,3}}=\Delta_{T_{3,7}}(t)$.
However, $S^3- T(3,7)$ and $\Sigma(2,3,7)- K_{10,3}$ are not homeomorphic.
In fact, we cannot obtain $\Sigma(2,3,7)$ by any Dehn surgery of $T(3,7)$ by Moser's result \cite{Mo}.
On the statement 2. in Theorem~\ref{rmk1}, $K_{23,7}$ is a double-primitive knot in $\Sigma(2,3,11)$ and does not have lens surgery polynomial in any $\lz$.
\subsubsection{Dual class and $\alpha$-index.}%%%%%%%%%%%%%%%%%%%%%%%%%%%%
\label{dualclassbounds}  
%The lens space slope $p$ of lens surgery knot $K$ in $S^3$ and genus $g(K)$ satisfy the following inequality due to Greene in \cite{G} and Rasmussen in \cite{ras2}:
%$$2g(K)-1\le p-2\sqrt{(4p+1)/5},\  p\le 4g(K)+3.$$
%It is well-known that these are best possible bounds.
%For lens space knot $K$ in an $\lz$, the following bound inequality is satisfied:
%$$2g(K)-1\le p.$$

The dual classes for lens space knots with parameter $(p,k,k_2)$ have an upper bound $p/2$ by the definition:
$$k\le k_2\le p/2.$$
%The definitions of $k,k_2$ are in \cite{tan4} or Section~\ref{Prel}.
We give an upper bound by using the $\alpha$-index:
\begin{theorem}
\label{them2}
Let $K$ be a non-trivial lens space knot in an $\lz$ or $K_{p,k}$ for relatively prime positive integers $p,k$.
Then we have
\begin{equation}
\label{alphainequation}
k\le k_2\le \alpha(K)+1.
\end{equation}

Let $(n_1,n_3,\cdots, n_{2s-1})$ be the adjacent sequence of $K$.
Then either of the following conditions holds:
\begin{enumerate}
\item[(1)] $k_2=\alpha(K)+1$
\item[(2)] There exists an integer $0<s_2\le s$ such that $k_2=n_1-n_{2s_2-1}$.
\end{enumerate}
Furthermore, if $k<k_2$, then there exists an integer $s_1$ such that $k=n_1-n_{2s_1-1}$ and $1<s_1< s_2$ hold.
\end{theorem}
If $K$ is a lens space knot in an $\lz$ with $k=k_2$, then we have $k=k_2=1$ or $3$.
These surgery parameters are realized by the unknot or $K_{8,3}$ in $\Sigma(2,3,5)$ due to p.288(A) in \cite{tan1}.
Here we give the further refined theorem for the case of (1).
\begin{theorem}
\label{3th}
Let $K$ be a lens space knot in an $\lz$ or $K_{p,k}$ in $Y_{p,k}$ with parameter $p,k$ and $NS(K)=(n_1,n_2,\cdots, n_{2r+1})$ and adjacent length $s$.
Suppose that $k_2=\alpha(K)+1$.
\begin{enumerate}
\item[(a)] If $n_{2s-1}-n_{2s}>3$, then $n_2-n_3=1$.
\item[(b)] If $n_{2s-1}-n_{2s}=3$, then $n_2-n_3\ge 2$.
\item[(c)] If $n_{2s-1}-n_{2s}=2$, then $0\le n_2-n_3-(n_{2s}-n_{2s+1})\le 1$ holds.
\end{enumerate}
\end{theorem}

\subsubsection{The non-zero exponents $n_1,n_2,n_3$, and $n_4$.}
\label{nthtermofAlex}
%the Alexander polynomial of any double-primitive knot is not the same polynomial as such polynomials.
We apply the non-zero curve to non-zero coefficients of lens surgery polynomial.
%The inequality (\ref{alphainequation}) is useful to classify lens surgery parameters having some properties of lens surgery polynomials.
\begin{cor}
\label{main}
Let $K$ be a non-trivial lens space knot in an $\lz$ or $K_{p,k}$ in $Y_{p,k}$ for relatively prime positive integers $p,k$.
Let $\{n_i\}$ be the non-zero exponents of $\Delta_K(t)$ defined in Theorem~\ref{OS1}.
Then we have
\begin{equation}
\label{second}
n_1-n_2=1.
\end{equation}
\end{cor}
In the case of lens space knot in $S^3$, this corollary was proven by the author in \cite{tan3}, although, here in more general cases we reprove this corollary by using the non-zero curve.
It is proven in \cite{HW} and \cite{K} that $n_2=g-1$ for any L-space knot in $S^3$,
where $g$ is the genus of the knot.
%Note that this  corollary includes lens space surgeries for $K_{p,k}$.

%Let $AS=(n_1,n_3,\cdots,n_{2s-1})$ be an adjacent sequence for a lens space surgery.
%We prove the following relationship between the adjacent sequence and the surgery parameter.

By using Corollary~\ref{thenumber}, we determine the third and fourth non-zero coefficients of any lens space knot.
\begin{theorem}
\label{thirdfourth}
The non-zero sequence $\{n_i\}$ of a lens space knot in an $\lz$ or $K_{p,k}$ with at least $4$ non-zero coefficients of the
Alexander polynomial satisfies with the following:
$$1\le n_3-n_4\le 3.$$
Furthermore, the non-zero sequence of a lens space knot $K$ in an $\lz$ does not satisfy $n_3-n_4=3$.
\end{theorem}

%\begin{prop}
%\label{ds1}
%Let $(d_1,\cdots,d_s)$ be an adjacent sequence of lens space surgery with parameter $(p,k)$ that the lens space knot is a lens space knot in $\lz$ or $K_{p,k}$.
%
%Then there exist integers $1\le s_1,s_2\le s$ such that 
%$$d_{s_1}=\begin{cases}d_1-k\hspace{.35cm}\text{or}\\d_1-k+1\end{cases}\text{ and }d_{s_2}=\begin{cases}d_1-k_2\hspace{.35cm}\text{or}\\d_1-k_2+1.\end{cases}$$ 
%\end{prop}
%We immediately get the two corollaries:
%\begin{cor}
%If $0<k$ and $k+2\le |k_2|$, then the number $s$ of the adjacent exponents is greater than and equal to $2$.
%\end{cor}

%In particular, the Alexander polynomial of lens space knot $K$ in $S^3$ can be expanded as follows:
%$$\Delta_K(t)=t^d-t^{d-1}+t^{d_2}-t^{d_2-1}+\cdots$$
%or 
%$$\Delta_K(t)=t^d-t^{d-1}+t^{d_2}-t^{d_2-2}+\cdots.$$
%\end{theorem}
In the case of lens space surgery in an $\lz$ or $K_{p,k}$ we classify the realization of lens surgeries with $g(K)\le \frac{k_2+4}{2}$, $g(K)\le 5$ or
lens surgery polynomials with at most $7$ non-zero coefficients.\\
{\bf Theorem~\ref{dle5}.}
{\it 
Let $K$ be a lens space knot in an $\lz$ with $g(K)\le 5$.
Then the parameter can be realized by either of the following lens space knots:
$$T(2,3),\ T(2,5),\ T(2,7),\ T(3,4),\ T(2,9),\ T(3,5),\ T(2,11),\text{ or }Pr(-2,3,7)$$
}\\
{\bf Theorem~\ref{234class}.}
{\it 
If a lens space knot $K$ in an $\lz$ satisfies $2g(K)-4\le k_2\le 2g(K)-2$, then 
the lens surgery parameters are $(11,3)$, $(14,3)$, $(19,7)$ and are realized by 
$$T(3,4), T(3,5)\text{ or }Pr(-2,3,7),$$
respectively.
}\\
For the proofs of these theorems, see Section~\ref{K2n2g-42g-1} and Section~\ref{57nonzerocoeff}.
On lens space surgeries with $2g(K)-1\le k_2$ the proof will be in Section~\ref{2d-1k2class}.
\section*{Acknowledgements}
This research is a collection of results proven from 2006 to 2010.
I would like express the gratitude for my supervisor Masaaki Ue in that period. 
I would also thank for seminar organized by Teruhisa Kadokami (was taken place in 2006 and 2007) and useful communication with Noriko Maruyama and Tsuyoshi Sakai as well.
I was supported by the Grant-in-Aid for JSPS Fellows for Young Scientists(17-1604) and JSPS KAKENHI Grant Number 24840006, and 26800031.

%\begin{theorem}
%Suppose that $L(p,q)=S_p^3(K
%Let $h$
%the quotient of $p$ divided by $h'$ is $a$ and remainder is $b$.
%\end{theorem}
%Any double-primitive knot in $S^3$ is a type-(A) and type-(B) knot.

\section{Preliminaries}%%%%%%%%%%%%%%%%%%%%%
\label{Prel}
In this section we introduce several definitions and facts which are used in this paper.
\subsection{Lens surgery parameters}
\label{lsk}
If a $p$-Dehn surgery $Y_p(K)$ of $\zhs$ $Y$ is a lens space $L(p,q)$,
then the dual knot $\tilde{K}$ of this surgery represents an element in $H_1(L(p,q))$.
Let $c$ be a core of the genus one Heegaard splitting of $L(p,q)$.
If $[\tilde{K}]=k[c]$, then $p,k$ are relatively prime.
Then the pair $(p,k)$ is called {\it (lens space surgery) parameter}.
The second dual class is the integer that is the inverse of $\pm k\bmod p$ and satisfies $0< k\le k_2<p/2$.
This is the precise definition of the lens surgery parameter $(p,k,k_2)$ which is already 
introduced in Section~\ref{lssurgery}.

%The pair $(p,k)$ is a lens surgery parameter.
%Actually, we have an ambiguity of choice of $k$ depending on the choice of the two cores of the genus one Heegaard splitting of the lens space.
%Let $(p,k')$ be the parameter for the other choice.
%Then $kk'=\pm 1\bmod p$.
%The sign is due to the orientation of the knot.
%Hence, as the dual class $k$ of the lens surgery parameter $(p,k)$ we use the minimal one in the set $\mathcal{K}=\{k,p-k,k',p-k'\}\subset \{0,1,\cdots, p-1\}$.
%Let $k_2$ be the second dual class.
%We often write the lens surgery parameter $(p,k,k_2)$ by adding $k_2$ information.

\subsection{Simple 1-bridge knot}%%%%%%%%%%%%%%%%%%%%%
\label{simple1bridgeknot}
Berge in \cite{B} defined {\it double-primitive knot}, which is a class of lens space knots in $S^3$.
We will define a double-primitive knot in a $\zhs$.
\begin{defn}[Double-primitive knot in a $\zhs$]
Let $Y$ be a $\zhs$ with at most Heegaard-genus 2.
Let $Y=H_0\cup_{\Sigma_2} H_1$ be a genus 2 Heegaard decomposition of $Y$.
Then $K\subset Y$ is double-primitive knot if $K$ is isotopic to a knot $K'$ so that $K'$ can lie in $\Sigma_2$ and the knot $K'$ gives a primitive element in both $\pi_1(H_i)$ ($i=0,1$).
\end{defn}
%As mentioned in Fact~\ref{fact1}, any double-primitive knot is a lens space knot.
%See \cite{B} in the case of $S^3$.
%Conversely, the following is conjectured:

%Furthermore, Berge falls double-primitive knots in $S^3$ into several types.
%We call any knot which belongs to the list, {\it Berge knot}.
%Greene in \cite{G2} proved that a lens space produced by an integral Dehn surgery of a knot is obtained by an integral Dehn surgery of a Berge knot.

We define 1-bridge knot in a lens space.
Let $U_\alpha\cup U_\beta$ be a genus one Heegaard splitting of the lens space.
If $K$ is isotopic to the union of two arcs $A_\alpha$ and $A_\beta$ satisfying the following conditions,
then we call it {\it a 1-bridge knot} in the lens space:
\begin{itemize}
\item $A_\gamma=K\cap U_\gamma$ for $\gamma=\alpha, \beta$.
\item $A_\gamma$ is boundary parallel in $U_\gamma$ for $\gamma=\alpha, \beta$.
\end{itemize}
We call $K'$ a {\it 1-bridge position} of $K$.

Suppose that $K'$ is a knot in a lens space with 1-bridge position.
For $\gamma=\alpha,\beta$, let $D_\gamma$ be the meridional disk of the Heegaard solid tori $U_\gamma$.
If the $A_\alpha,A_\beta$ (in the sense above) are both simple arcs in $D_\alpha$ and $D_\beta$, then the 1-bridge knot is called {\it simple}.
%An integral surgery of simple 1-bridge knot in a lens space produces a homology sphere.

Let $K$ be a simple 1-bridge knot $A_\alpha\cup A_\beta$.
Let integers $0,\cdots, p-1$ denote minimal intersection points in order between $\alpha$ and the projection of $\beta$ on $\partial U_\alpha$.
Suppose the arc $A_\alpha$ is connecting the intersection points $0$ and $k$.
Then we denote the simple 1-bridge knot by $\tilde{K}_{p,q,k}$.

The knot $\tilde{K}_{p,q,k}$ satisfies $[\tilde{K}_{p,q,k}]=k[c]$ in $H_1(L(p,q))$, where $c$ is a core circle of $U_\beta$.
Here we choose some orientation of $\tilde{K}_{p,q,k}$.
Suppose that some integral Dehn surgery along $\tilde{K}_{p,q,k}$ produces a $\zhs$.
Then integral Dehn surgery of the dual knot goes back to $L(p,q)$ again.
If the slope is positive integral, then the $\zhs$ is uniquely determined and called $Y_{p,k}$.
The dual knot in $Y_{p,k}$ is denoted by $K_{p,k}$.
Then we have $q\equiv k^2\bmod p$.
For the proof read Corollary~2.3 in \cite{tan4}.
The knot $K_{p,k}$ in $Y_{p,k}$ gives a double-primitive knot (Fact~\ref{fact1}).
See \cite{Sa} for the detail.
Thus $K_{p,k}$ gives a lens space knot in $Y_{p,k}$ with lens surgery parameter $(p,k)$.

%The surgery slope will be given by Equation (\ref{sssslope}).
%It is well-known that $K_{p,k}$ is double-primitive in $Y_{p,k}$.
%The proof is in \cite{B} in the case of $S^3$.
%In the same way one can be proven in the case of general homology spheres.

As an example of $K_{p,k}$, see Figure~\ref{torusheegaard}.
Consider the torus obtained by identifying both the slides in the rectangle as the Heegaard torus $T=\partial U_\alpha$.
The $\alpha$ and $\beta$ are the boundary circles of $D_\alpha$ and $D_\beta$ in the two solid tori 
$U_\alpha$ and $U_\beta$ respectively.
The slope of $\beta$ is $p/q$ for the lens space $L(p,q)$.
The picture is the case of $p=3$ and $q=1$.
The two circles stand for the two points $A_\alpha\cap T$.
The broken line in Figure~\ref{torusheegaard} is the simple arc $A_\beta$ projected on $T$ along the compressing disk.
Joining the arcs, we produce a knot $\tilde{K}_{3,1}$ in the lens space $L(3,1)$.
%$K_{p,k}$ denotes the dual knot in the resulting manifold $Y_{p,k}$ for the integral surgery.
\begin{figure}[htbp]
\begin{center}
%WinTpicVersion4.30d
{\unitlength 0.1in%
\begin{picture}( 16.7000, 13.5500)(  2.3000,-16.5500)%
% BOX 1 0 3 0 Black White
% 2 700 300 1900 1500
% 
\special{pn 13}%
\special{pa 700 300}%
\special{pa 1900 300}%
\special{pa 1900 1500}%
\special{pa 700 1500}%
\special{pa 700 300}%
\special{pa 1900 300}%
\special{fp}%
% CIRCLE 1 0 3 0 Black White
% 4 1145 1400 1145 1455 1145 1455 1145 1455
% 
\special{pn 13}%
\special{ar 1145 1400 55 55  0.0000000  6.2831853}%
% CIRCLE 1 0 3 0 Black White
% 4 1530 1400 1530 1455 1530 1455 1530 1455
% 
\special{pn 13}%
\special{ar 1530 1400 55 55  0.0000000  6.2831853}%
% LINE 2 0 3 0 Black White
% 2 700 1400 1900 1400
% 
\special{pn 8}%
\special{pa 700 1400}%
\special{pa 1900 1400}%
\special{fp}%
% LINE 2 0 3 0 Black White
% 2 1700 1400 1800 1640
% 
\special{pn 8}%
\special{pa 1700 1400}%
\special{pa 1800 1640}%
\special{fp}%
% LINE 2 0 3 0 Black White
% 2 790 1205 420 1080
% 
\special{pn 8}%
\special{pa 790 1205}%
\special{pa 420 1080}%
\special{fp}%
% STR 2 0 3 0 Black White
% 4 430 970 430 1020 5 0 0 0
% $\beta$
\put(4.3000,-10.2000){\makebox(0,0){$\beta$}}%
% STR 2 0 3 0 Black White
% 4 1825 1670 1825 1720 5 0 0 0
% $\alpha$
\put(18.2500,-17.2000){\makebox(0,0){$\alpha$}}%
% LINE 2 0 3 0 Black White
% 2 700 1500 1100 300
% 
\special{pn 8}%
\special{pa 700 1500}%
\special{pa 1100 300}%
\special{fp}%
% LINE 2 0 3 0 Black White
% 2 1100 1500 1500 300
% 
\special{pn 8}%
\special{pa 1100 1500}%
\special{pa 1500 300}%
\special{fp}%
% LINE 2 0 3 0 Black White
% 2 1500 1500 1900 300
% 
\special{pn 8}%
\special{pa 1500 1500}%
\special{pa 1900 300}%
\special{fp}%
% LINE 2 2 3 0 Black White
% 2 1120 1345 1460 300
% 
\special{pn 8}%
\special{pa 1120 1345}%
\special{pa 1460 300}%
\special{dt 0.045}%
% LINE 2 2 3 0 Black White
% 2 1460 1500 1490 1445
% 
\special{pn 8}%
\special{pa 1460 1500}%
\special{pa 1490 1445}%
\special{dt 0.045}%
\end{picture}}%
\caption{The knot $\tilde{K}_{3,1}$ in the Heegaard torus $\partial U_\alpha$ of $L(3,1)$.}
\label{torusheegaard}
\end{center}
\end{figure}
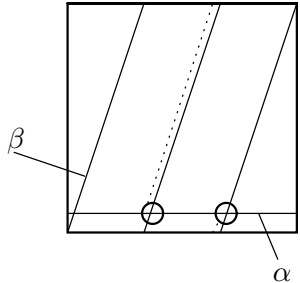

%The knot $\tilde{K}$ in the lens space gives the homology class $[\tilde{K}]=k[C]\in H_1(L(p,q))$, where 
%$C$ is the core of the genus one Heegaard decomposition of $L(p,q)$.

%\subsection{The genus and lens space surgery}%%%%%%%%%%%%%%%%%%
%Suppose that the $p$-surgery $Y_p(K)$ of an L-space homology sphere $Y$ is an L-space.
%Then the following inequality holds:
%\begin{equation}\label{oseq}2g(K)\le p+1.\end{equation}
%In the lens surgery case of $Y=S^3$, actually the inequality can be made sharper as in \cite{G}.
%Furthermore, there exist lens surgeries satisfying the equality when $Y=\Sigma(2,3,5)$ as in \cite{tan3}.
%
%\subsection{Lens space realization.}
Here we define Dehn surgery realization.
\begin{defn}
\label{rlzd}
Let $(p,k)$ be a pair of relatively prime integers.
If there exists a lens space knot $K$ in a $\zhs$ $Y$ with the lens surgery parameter $(p,k)$, 
then we say that $(p,k)$ is realized by $K$ in $Y$.
\end{defn}

\subsection{Alexander polynomial formula of lens space knot}%%%%%%%%%%%%%%%
\label{parameter}
We introduce the following theorem:
\begin{theorem}[\cite{[10]},\cite{tan2}]
\label{yamkatan}
Let $K$ be a lens space knot in a $\zhs$ $Y$ with surgery parameter $(p,k)$.
Then, for an integer $l$ with $kl=\pm1\bmod p$ and $\gcd(k,l)=1$, we have
$$\Delta_K(t)\equiv\Delta_{T(k,l)}(t)\ \ \bmod t^p-1.$$
\end{theorem}
Here we define the {\it smallest symmetric representative} $\bar{f}(t)$ of $f(t)$ in ${\Bbb Q}[t^{\pm1}]/(t^p-1)$
to be 
$$\bar{f}(t)=\begin{cases}
\sum_{|i|<\frac{p}{2}}\alpha_it^i&p\equiv 1\bmod 2\\
\sum_{|i|<\frac{p}{2}}\alpha_it^i+\frac{\alpha_{\frac{p}{2}}}2(t^{\frac{p}{2}}+t^{-\frac{p}{2}})&p\equiv 0\bmod 2,
\end{cases}$$
where $f(t)=\sum_{i\in {\mathbb Z}}\beta_i t^i$ and $\alpha_i=\sum_{j\equiv i\bmod p}\beta_j$.

If $2g(K)< p+1$, then Theorem~\ref{yamkatan} means 
$$\Delta_K(t)=\overline{\Delta_{T(k,l)}}(t).$$
%The polynomial $\bar{f}(t)$ is equivalent to $f(t)$ in ${\Bbb Q}[t^{\pm1}]/(t^p-1)$.
%Hence, the last statement in Theorem~\ref{yamkatan} means $\Delta_K(t)=\overline{\Delta_{T(k,l)}}(t)$.
%\subsection{The coefficient formula of Alexander polynomial for admissible lens surgery.}%%%%%%%%%%%%%%%%%

Here we give the coefficient formula (Proposition~\ref{tangeprop}) of the Alexander polynomials of lens space knot.
This result has been proven in \cite{tan2}, and we will reprove it as a formula with a bit different form.

We put $kk_2\equiv e:=\pm1\bmod p$, $c=\frac{(k+1-p)(k-1)}{2}$, $m=\frac{kk_2-e}{p}$, and
$$I_\alpha=\begin{cases}\{1,2,\cdots,\alpha\}&\alpha>0\\\{\alpha+1,\alpha+2,\cdots,-1,0\}&\alpha<0\end{cases}$$
The bracket $[\cdot]_p$ stands for the least absolute remainder with respect to $p$.
Namely, the remainder satisfies $-\frac{p}{2}<[y]_p\le\frac{p}{2}$ for integer $y$.
In \cite{tan2}, the coefficient $a_i(K)$ of the symmetrized Alexander polynomial $\Delta_K(t)$ is computed as follows:
\begin{prop}[\cite{tan2}]
\label{tangeprop}
Let $K$ be a lens space knot in an $\lz$ with $Y_p(K)=L(p,k^2)$ and the parameter $(p,k,k_2)$ and $k_2^2=q_2\bmod p$.

If $2g(K)<p$, then we have
\begin{equation}\label{aiformula}
a_i(K)=\begin{cases}
-em+e\cdot\#\{j\in I_k|[q_2(j+ki+c)]_p\in I_{ek_2}\}&|i|\le g(K)\\0&|i|>g(K).\end{cases}\end{equation}

If $2g(K)=p$, then we have
$$a_i(K)=
\begin{cases}
-em+e\cdot\#\{j\in I_k|[q_2(j+ki+c)]_p\in I_{ek_2}\}&|i|<\frac{p}{2}\\
1&i=\pm\frac{p}{2}.\end{cases}$$
\end{prop}
%Note that if a lens space surgery $Y_p(K)=L(p,q)$ on a homology sphere $Y$ is given, $q=-k^{\pm 2}\bmod p$ holds, where $(p,k)$ is the lens surgery parameter.

%In the case of the lens surgery on $S^3$, since $g(K)$ satisfies $2g(K)<p$ (by \cite{G}),
%the coefficient $a_i$ satisfies (\ref{aiformula}).
Here we reprove this proposition again.
We define the $E$-function.
\begin{defn}
Let $\beta$ be a non-zero integer.
We define a function $E_\beta$ as follows:
$$E_{\beta}(\alpha)=\begin{cases}
e& [\alpha]_p\in I_\beta \\
0& \text{otherwise.}
\end{cases}$$
\end{defn}

Here we prove Proposition~\ref{tangeprop}.
\begin{proof}
Suppose that $2g(K)<p$.
We set the right hand side of (\ref{aiformula}) as $\alpha_i(K)$.
Let $l$ be a positive integer with $(k,l)=1$ and $l= k_2\bmod p$.
Then we have the following:
\begin{eqnarray}
&&t^{elc+1}(t^k-1)(t^{el}-1)\sum_{0\le i<p}\alpha_it^i\nonumber\\
&&\equiv\sum_{0\le i<p}(\alpha_{i-k-el-elc-1}-\alpha_{i-k-elc-1}-\alpha_{i-el-elc-1}+\alpha_{i-elc-1})t^i\bmod t^p-1.
\label{alex}
\end{eqnarray}
Then we have 
\begin{eqnarray*}
\alpha_{i-el}-\alpha_{i}&=&E_{ek_2}(q(ki+c))-E_{ek_2}(q(k(i+1)+c))\\
&=&E_{ek_2}(ek_2i+qc))-E_{ek_2}(ek_2(i+1)+qc))
\end{eqnarray*}
and 
\begin{eqnarray*}
&&\alpha_{i-k-el-elc-1}-\alpha_{i-k-elc-1}-\alpha_{i-el-elc-1}+\alpha_{i-elc-1}\\
&=&E_{ek_2}(ek_2(i-1)-1)-E_{ek_2}(ek_2(i-1))-E_{ek_2}(ek_2i-1)+E_{ek_2}(ek_2i)\\
&=&\begin{cases}
1&i=k+2,k\\-2&i=k+1\\0&\text{otherwise.}\end{cases}
\end{eqnarray*}
Thus, $(\ref{alex})$ is congruent to $t^k(t-1)^2$ and we have 
$$t^{elc+1}(t^k-1)(t^{el}-1)\Delta_K(t)\equiv t^k(t^{ekl}-1)(t-1)\bmod t^p-1.$$
Hence, we have 
$$t^{elc+1+\frac{1-e}{2}(kl-l)}(t^k-1)(t^{l}-1)\Delta_K(t)\equiv t^k(t^{kl}-1)(t-1)\bmod t^p-1$$
and 
\begin{equation}
\label{eq1}
\Delta_K(t)\equiv t^{-elc-1+k}\frac{(t^{kl}-1)(t-1)}{(t^k-1)(t^l-1)}
\end{equation}
in ${\Bbb Q}[t^{\pm1}]/\sum_{i=0}^{p-1}t^i$.
Here $elc+1-k+\frac{1-e}{2}(kl-l)=\frac12(k-1)(l-1)\bmod p$ holds.
In fact, $2elc+2-2k+(1-e)(kl-l)-(k-1)(l-1)=(k-1)(m+l)p\bmod 2p$.
If $(k-1)(m+l)\equiv 1\bmod 2$, then $k\equiv 0\bmod 2$ and $m+l=1\bmod 2$.
However, in this case, $p\equiv 1\bmod 2$, $l\equiv 1\bmod 2$ and $m\equiv 0\bmod 2$ hold.
These contradict about $kk_2=e+pm$.
As a result, $2elc+2-2k+\frac{1-e}{2}(kl-l)-(k-1)(l-1)\equiv 0\bmod 2p$.

Further, when $t=1$, the right hand side of (\ref{eq1}) is $1$.
$\Delta_K(1)=-emp+e\sum_{i=0}^{p-1}\#\{j\in I_k|[q(j+ki+c)]_p\in I_{ek_2}\}=-emp+ekk_2=1$.
Thus, (\ref{eq1}) lifts as the equality in ${\Bbb Z}[t^{\pm1}]/t^p-1$.

In the case of $2g(K)=p$, since $-em+e\cdot\#\{j\in I_k|[q_2(j+p/2+c)]_p\in I_{ek_2}\}=2$,
then we can use the same formula as (\ref{aiformula}).
\end{proof}

Let $(p,k)$ be relatively prime positive integers
and $a_i(p,k)$ the $i$-th coefficient of $\overline{\Delta_{T(k,l)}}(t)$.
Then we define the coefficient $\bar{a}_i(p,k)$ to be $\sum_{j\equiv i\bmod p}a_j(p,k)$.
The coefficients have the period $p$ namely, $\bar{a}_{i+p}(p,k)=\bar{a}_i(p,k)$.
We define {\it $A$-function} $A(x)$ and {\it $A$-matrix} $(A_{i,j})$ to be
$$A:{\Bbb Z}\to {\Bbb Z},\ \ A(x)=\bar{a}_{k_2(x-c)}(p,k)$$
and
$$A_{i,j}=\bar{a}_{k_2(i-c)+j}(p,k),\ \ (i,j)\in {\Bbb Z}^2$$
respectively.
Namely, $A(i+ekj)=A_{i,j}$ holds.
Further, we denote the difference $A(x)-A(x-1)$ by $dA(x)$, and $A_{i,j}-A_{i-1,j}=dA_{i,j}$.
We call these {\it $dA$-function} and {\it $dA$-matrix}.

We define {\it $A'$-function} and {\it $A'$-matrix} to be $A'(x)=\bar{a}_{k(x-c')}(p,k)$, where $c'=(k_2+e-p)(k_2-e)/2$
and $A_{i,j}'=\bar{a}_{ej+k(i-c')}(p,k)$.

\begin{lem}
\label{alternating}
Let $(p,k,k_2)$ be a surgery parameter and $q_2=k_2^2\bmod p$.
The difference $dA(x)$ is computed by 
\begin{equation}
\label{eq2}
dA(x)=E_{ek_2}(q_2x+ek_2)-E_{ek_2}(q_2x)=\begin{cases}1&[q_2x]_p\in I_{-k_2}\\-1&[q_2x]_p\in I_{k_2}\\0&\text{otherwise}\end{cases}
\end{equation}
and 
\begin{equation}
\label{alternatingAtype}
dA(x)=1\Leftrightarrow dA(x+ek)=-1.
\end{equation}
\end{lem}
\begin{proof}
By the definition of $A(x)$, we have
$A(x)=-em+e\#\{k\in I_k|[q_2(j+k(ek_2(x-c))+c)]_p\in I_{ek_2}\}=
-em+e\#\{j\in I_k|[q_2(j+x)]_p\in I_{ek_2}\}$ and 
\begin{eqnarray*}
dA(x)&=&A(x)-A(x-1)\\
&=&e\{j\in I_k|[q_2(j+x)]_p\in I_{ek_2}\}-e\{j\in I_k|[q_2(j+x-1)]_p\in I_{ek_2}\}\\
&=&E_{ek_2}(q_2x+ek_2)-E_{ek_2}(q_2x).
\end{eqnarray*}
Therefore we have (\ref{eq2}).
The (\ref{alternatingAtype}) follows from the equivalence relation: $dA(x)=1\Leftrightarrow [q_2x]_p\in I_{-k_2}\Leftrightarrow [q_2x+k_2]_p\in I_{k_2}\Leftrightarrow dA(x+ek)=-1$.
\end{proof}
In the same way as the case of $dA$, we have $dA'(x)=1\Leftrightarrow[qx]_p\in I_{-k}\Leftrightarrow [qx+k]_p\in I_{k}\Leftrightarrow dA'(x+ek_2)=-1$, hence,
$$dA'(x)=-1\Leftrightarrow dA'(x+ek_2)=1.$$
Here $q$ is an integer with $q\equiv k^2\bmod p$.
%\subsection{The surgery slope to give $(Y_{p,k},K_{p,k})$.}%%%%%%%%%%%%%%%%%%%%%%%%%%%%%%%%%%%
\label{surgeryslope}
%In this and next section we deal with lens space surgery of the type-(B).
%One example of simple 1-bridge knots is given in Figure~\ref{torusheegaard}.
%Let $K$ be any simple 1-bridge knot in a lens space $L(p,q)$ with $[K]=k[c]$ in $H_1(L(p,q))$.
%This is the same setting as Section~\ref{simple1bridgeknot}.
%
%The surgery slope of $K$ to give $Y_{p,k}$ is given by 
%\begin{equation}
%\label{sssslope}
%-\bar{a}_{-k_2(c+1)}(p,k)[m]+[l],
%\end{equation}
%where $m$ and $l$ are the meridian and longitude of $K$ respectively.
%See \cite{tan3}.
%Then the resulting homology sphere gives $Y_{p,k}$, and the dual knot is defined to be $K_{p,k}$.
%The knot $K_{p,k}$ is a double-primitive knot with lens surgery parameter $(p,k)$.

\subsection{Alexander polynomial of $K_{p,k}$.}%%%%%%%%%%%%%%%%%%%%%%%%%%%%%%%%%%%%%%%
Ichihara, Saito, and Teragaito gave the following formula of the Alexander polynomial of $K_{p,k}$ in $S^3$.
Here $k_2$ is the second dual class of $(p,k)$ and a symbol $[[\cdot]]_p$ presents the remainder dividing by $p$, where the values are reduced to an element in $\{1,\cdots, p\}$.
\begin{theorem}[\cite{IST}]
\label{ISTformula}
Let $q,q_2$ be integers with $q=k^2\bmod p$ and $q_2=k_2^2\bmod p$.
Then $\Delta_{K_{p,k}}(t)$ is computed by the following formula:
\begin{eqnarray}
\Delta_{K_{p,k}}(t)&\doteq&\frac{\sum_{i=0}^{k_2-1}t^{\Phi(i)\cdot p-[[qi]]_p\cdot k_2}}{\sum_{i=0}^{k_2-1}t^i}\label{ISTformulaitself}\\
&\doteq&\frac{\sum_{i=0}^{k-1}t^{\Phi'(i)\cdot p-[[q_2i]]_p\cdot k}}{\sum_{i=0}^{k-1}t^i},\label{ISTformulaitself2}
\end{eqnarray}
where $\Phi(i)=\#\{j\in I_{k_2-1}|[[qj]]_p<[[qi]]_p\}$ and $\Phi'(i)=\#\{j\in I_{k-1}|[[q_2j]]_p<[[q_2i]]_p\}$.
\end{theorem}
This formula works for the $K_{p,k}$ in arbitrary $Y_{p,k}$ as well as $S^3$.
%Note that this formula can be also used even if one exchanges $k$ and $k_2$.
%$$\frac{\sum_{i=0}^{k-1}t^{\Phi(i)\cdot p+[[q'i]]_p\cdot k}}{\sum_{i=0}^{k-1}t^k}=\frac{\sum_{i=0}^{k-1}t^{\Phi(i)\cdot p+[[q'i]]_p\cdot k+1}-t^{\Phi(i)\cdot p+[[q'i]]_p\cdot k}}{t^k-1}$$
%Here notice that the right hand side of the formula (\ref{ISTformulaitself}) may be not symmetrized in general (because the exponent of the center may not be $0$).
In this paper we use this formula in many times to compute the Alexander polynomial and the genus of $K_{p,k}$.
We denote the $i$-th coefficient of the right hand side of (\ref{ISTformulaitself}) and (\ref{ISTformulaitself2}) by $b_i$ and $b_i'$ respectively.

We define {\it $B$-matrix} and {\it $B'$-matrix} to be 
$$B_{i,j}=b_{ik_2+j}, B'_{i,j}=b'_{ik+j}$$
for $(i,j)\in{\Bbb Z}^2$ respectively.
%$$B:{\Bbb Z}\to {\Bbb Z}\ \ B(x)=b_{-kx}$$ 
Similarly, we define the following differences to be $dB$-matrix and $dB'$-matrix.
%$dB_{i,j}\in {\Bbb Z}$ 
%$$dB(x):=B(x)-B(x+1)$$
%{and 
$$dB_{i,j}:=B_{i,j}-B_{i-1,j}\text{ and }dB'_{i,j}:=B'_{i,j}-B'_{i-1,j}$$
respectively.
\begin{lem}
\label{dplem}
Let $q,q_2$ be integers with $q\equiv k^2\bmod p$ and $q_2\equiv k^2_2\bmod p$.
The $dB$-matrix and $dB'$-matrix are computed as follows:
$$dB_{i,j}=b_{ik_2+j}-b_{(i-1)k_2+j}=
\begin{cases}
1&\Phi(l-1)\cdot p=([[q(l-1)]]_p+i)\cdot k_2+j\text{ for some }l\in I_{k_2}\\
-1&\Phi(l-1)\cdot p=([[q(l-1)]]_p+i)\cdot k_2+j-1\text{ for some }l\in I_{k_2}\\
0&\text{otherwise.}
\end{cases}$$
$$dB'_{i,j}=b'_{ik+j}-b'_{(i-1)k+j}=
\begin{cases}
1&\Phi(l-1)\cdot p=([[q_2(l-1)]]_p+i)\cdot k+j\text{ for some }l\in I_{k}\\
-1&\Phi(l-1)\cdot p=([[q_2(l-1)]]_p+i)\cdot k+j-1\text{ for some }l\in I_{k}\\
0&\text{otherwise.}
\end{cases}$$
Thus $dB_{i,j}=-1\Leftrightarrow dB_{i,j-1}=1$ and $dB'_{i,j}=-1\Leftrightarrow dB'_{i,j-1}=1$ hold.
%Here $\Phi(l)$ is the same as the one in Theorem~\ref{ISTformula}.
\end{lem}
\begin{proof}
Since we can do the same argument by exchanging $k$ and $k_2$, we prove the case of $dB$-matrix only.
Let $F(t)$ denote the right hand side of (\ref{ISTformulaitself}).
Then we have
\begin{eqnarray*}
(t^{k_2}-1)F(t)&=&\sum_{i}(b_{i-k_2}-b_i)t^i\\
&=&(t-1)\sum_{i=0}^{k_2-1}t^{\Phi(i)\cdot p-[[qi]]_p\cdot k_2}=\sum_{i=0}^{k_2-1}(t^{\Phi(i)\cdot p-[[qi]]_p\cdot k_2+1}-t^{\Phi(i)\cdot p-[[qi]]_p\cdot k_2})\\
&=&\sum_{i\in {\mathbb Z}}\left(\#\{l\in I_{k_2}|\Phi(l-1)\cdot p-[[q(l-1)]]_p\cdot k_2=i-1\}\right.\\
&&\left.-\#\{l\in I_{k_2}|\Phi(l-1)\cdot p-[[q(l-1)]]_p\cdot k_2=i\}\right)t^i.
\end{eqnarray*}
The number of the integers $l$ in $I_{k_2}$ satisfying $\Phi(l-1)\cdot p-[[q(l-1)]]_p\cdot k_2=i$ for a given $i$ is at most one, 
because taking modulo $p$ for the equation, we have $l\equiv 1-k_2i\bmod p$.
Hence the following holds.
\begin{eqnarray*}
b_{i}-b_{i-k_2}&=&\#\{l\in I_{k_2}|\Phi(l-1)\cdot p-[[q(l-1)]]_p\cdot k_2=i\}\\
&&-\#\{l\in I_{k_2}|\Phi(l-1)\cdot p-[[q(l-1)]]_p\cdot k_2=i-1\}\\
&=&\begin{cases}
1&\Phi(l-1)\cdot p-[[q(l-1)]]_p\cdot k_2=i\text{ for an integer }l\in I_{k_2-1}\\
-1&\Phi(l-1)\cdot p-[[q(l-1)]]_p\cdot k_2=i-1\text{ for an integer }l\in I_{k_2-1}\\
0&\text{otherwise.}\end{cases}
\end{eqnarray*}
From this formula, the last assertion follows easily.
\end{proof}
In the remaining part of this section we compute the non-zero sequence for small $p$.
For example, by the formula (\ref{ISTformulaitself}), we obtain the non-zero sequence of $K_{12,5}$ and $g(K_{12,5})=12$ as the table below.
On the other hand, by applying the pillowcase method in \cite{tan4} to the case of the parameter $(12,5)$, the homology sphere $Y_{12,5}$ is homeomorphic to $\Sigma(3,5,7)$.
Therefore we obtain the equality $\Sigma(3,5,7)_{12}(K_{12,5})=L(12,11)$.
Here we put the list of $K_{p,k}$ in a non-L-space homology sphere with $p\le23$.
\begin{table}[htbp]
\begin{center}
$\begin{array}{|c|c|c|c|c|}\hline
p&k&Y_{p,k}&g(K_{p,k})&NS_h\\\hline
10&3&\Sigma(2,3,7)&6&(6,5,3,2,0)\\\hline
12&5&\Sigma(3,5,7)&12&(12,11,7,6,5,4,2,1,0)\\\hline
13&5&\Sigma(3,5,8)&14&(14,13,9,8,6,5,4,3,1,0)\\\hline
15&4&\Sigma(3,4,11)&15&(15,14,11,10,7,6,4,2,0)\\\hline
16&7&\Sigma(4,7,9)&24&(24,23,17,16,15,14,10,9,8,7,6,5,3,2,1,0)\\\hline
17&3&\Sigma(2,3,11)&10&(10,9,7,6,4,2,1,0)\\\hline
17&4&\Sigma(3,4,13)&18&(18,17,14,13,10,9,6,4,2,0)\\\hline
17&5&\Sigma(2,5,7)&12&(12,11,7,6,5,4,2,1,0)\\\hline
19&3&\Sigma(2,3,13)&12&(12,11,9,8,6,5,3,2,0)\\\hline
20&9&\Sigma(5,9,11)&40&(40,39,31,30,29,28,22,21,20,19,18,17,\\
&&&&13,12,11,10,9,8,7,6,4,3,2,1,0)\\\hline
21&8&\Sigma(5,8,13)&42&(42,41,34,33,29,28,26,25,21,20,18,17,\\
&&&&16,15,13,12,10,9,8,7,5,4,3,1,0)\\\hline
23&5&\Sigma(2,5,9)&16&(16,15,11,10,7,5,2,0)\\\hline
23&7&\Sigma(2,3,11)&13&(13,12,10,9,6,5,3,2,0)\\\hline
\end{array}$
\caption{$K_{p,k}$ in non-L-space homology sphere $Y_{p,k}$ with $p\le 23$.}
\label{nonLspace23}
\end{center}
\end{table}

{\it Proof of Theorem~\ref{rmk1}.}
We make Table~\ref{nonLspace23} by computing the formula~(\ref{ISTformulaitself}).
These are half non-zero sequences of $K_{p,k}$ in non-L-space $\zhs$ with $p\le 23$.
The non-zero sequences can get the following equalities:
$$NS(K_{10,3})=NS(T(3,7)),\ NS(K_{12,5})=NS(K_{17,5})=NS(T(5,7))$$
$$NS(K_{13,5})=NS(T(5,8)),\ NS(K_{15,4})=NS(T(4,11))$$
$$NS(K_{16,7})=NS(T(7,9)),\ NS(K_{17,3})=NS(T(3,11))$$
$$NS(K_{19,3})=NS(T(3,13)),\ NS(K_{17,4})=NS(T(4,13))$$
$$NS(K_{20,9})=NS(T(9,11)),\ NS(K_{21,8})=NS(T(8,13))$$
$$NS(K_{23,5})=NS(T(5,9)).$$
These equalities mean the equalities of the corresponding Alexander polynomials.
Since any (even rational) Dehn surgeries of these torus knots do not produce any corresponding homology sphere $Y_{p,k}$,
those exteriors $Y_{p,k}- K_{p,k}$ and $S^3- T(r,s)$ are not homeomorphic each other.
$K_{23,7}$ lies in $\Sigma(2,3,11)$ and the polynomial $\Delta_{K_{23,7}}$ is not a cyclotomic polynomial and furthermore, 
it is not the Alexander polynomial of any lens space knot in an $\lz$.
In fact, suppose that the polynomial is a lens surgery polynomial in an $\lz$.
Since we have $\alpha(K_{23,7})=13$, hence we have $p\le 13^2+2\cdot 13+2=197$ by the estimate in Theorem~\ref{a22a2cor}.
We compute the formula (\ref{ISTformulaitself}) for $p\le 197$ and we can check that the polynomial is not any lens surgery polynomial in an $\lz$.
\qed
\section{Non-zero curve.}
\label{nonzero}
\subsection{Non-zero curve and non-zero region.}%%%%%%%%%%%%%%%%%%%%%%%%%%%%
In this section we define non-zero curves for a lens space knot.
The non-zero curves can visualize the places of all the non-zero coefficients in the $A$-matrix $(A_{i,j})$ and $B$-matrix $(B_{i,j})$.
Then the curves can explore the non-zero sequence.
The coefficients are computed by $dA_{i,j}$ or $dB_{i,j}$ in Lemma~\ref{alternating} and~\ref{dplem}.
\begin{lem}
\label{possible}
Let $K$ be a lens space knot in an $\lz$ or $K_{p,k}$ in $Y_{p,k}$.
We suppose that $X=A$ if $K$ is the former and $X=B$ if $K$ is the latter.
Let $(X_{i,j})$ be the $X$-matrix of the lens space surgery for a knot $K$ with $2g(K)\neq p$.
For the coordinate $(i,j)$ with $dA_{i,j}=-dA_{i,j+1}=1$ or $dB_{i,j}=-dB_{i,j+1}=1$.
The values of $X$-matrix around $(i,j)$ have one of the following local behaviors:
\begin{figure}[htbp]\begin{center}
%WinTpicVersion4.30d
{\unitlength 0.1in%
\begin{picture}( 40.3500,  5.7500)( 18.5500,-14.3500)%
% STR 2 0 3 0 Black White
% 4 2400 1070 2400 1170 5 0 0 0
% $-1$
\put(24.0000,-11.7000){\makebox(0,0){$-1$}}%
% STR 2 0 3 0 Black White
% 4 2000 880 2000 980 5 0 0 0
% $j+1$
\put(20.0000,-9.8000){\makebox(0,0){$j+1$}}%
% STR 2 0 3 0 Black White
% 4 2400 1400 2400 1500 5 0 0 0
% $i-1$
\put(24.0000,-15.0000){\makebox(0,0){$i-1$}}%
% VECTOR 2 0 3 0 Black White
% 2 2230 1360 2230 860
% 
\special{pn 8}%
\special{pa 2230 1360}%
\special{pa 2230 860}%
\special{fp}%
\special{sh 1}%
\special{pa 2230 860}%
\special{pa 2210 927}%
\special{pa 2230 913}%
\special{pa 2250 927}%
\special{pa 2230 860}%
\special{fp}%
% VECTOR 2 0 3 0 Black White
% 2 2190 1300 2990 1300
% 
\special{pn 8}%
\special{pa 2190 1300}%
\special{pa 2990 1300}%
\special{fp}%
\special{sh 1}%
\special{pa 2990 1300}%
\special{pa 2923 1280}%
\special{pa 2937 1300}%
\special{pa 2923 1320}%
\special{pa 2990 1300}%
\special{fp}%
% STR 2 0 3 0 Black White
% 4 2800 1070 2800 1170 5 0 0 0
% $0$
\put(28.0000,-11.7000){\makebox(0,0){$0$}}%
% STR 2 0 3 0 Black White
% 4 2400 870 2400 970 5 0 0 0
% $1$
\put(24.0000,-9.7000){\makebox(0,0){$1$}}%
% STR 2 0 3 0 Black White
% 4 2800 870 2800 970 5 0 0 0
% $0$
\put(28.0000,-9.7000){\makebox(0,0){$0$}}%
% STR 2 0 3 0 Black White
% 4 3600 1060 3600 1160 5 0 0 0
% $0$
\put(36.0000,-11.6000){\makebox(0,0){$0$}}%
% STR 2 0 3 1 Black White
% 4 4000 855 4000 955 5 0 0 0
% $-1$
\put(40.0000,-9.5500){\makebox(0,0){$-1$}}%
% STR 2 0 3 2 Black White
% 4 3600 860 3600 960 5 0 0 0
% $0$
\put(36.0000,-9.6000){\makebox(0,0){$0$}}%
% STR 2 0 3 3 Black White
% 4 4000 1055 4000 1155 5 0 0 0
% $1$
\put(40.0000,-11.5500){\makebox(0,0){$1$}}%
% VECTOR 2 0 3 4 Black White
% 2 3390 1300 4190 1300
% 
\special{pn 8}%
\special{pa 3390 1300}%
\special{pa 4190 1300}%
\special{fp}%
\special{sh 1}%
\special{pa 4190 1300}%
\special{pa 4123 1280}%
\special{pa 4137 1300}%
\special{pa 4123 1320}%
\special{pa 4190 1300}%
\special{fp}%
% VECTOR 2 0 3 5 Black White
% 2 3430 1360 3430 860
% 
\special{pn 8}%
\special{pa 3430 1360}%
\special{pa 3430 860}%
\special{fp}%
\special{sh 1}%
\special{pa 3430 860}%
\special{pa 3410 927}%
\special{pa 3430 913}%
\special{pa 3450 927}%
\special{pa 3430 860}%
\special{fp}%
% STR 2 0 3 0 Black White
% 4 4800 1065 4800 1165 5 0 0 0
% $0$
\put(48.0000,-11.6500){\makebox(0,0){$0$}}%
% STR 2 0 3 1 Black White
% 4 5200 865 5200 965 5 0 0 0
% $0$
\put(52.0000,-9.6500){\makebox(0,0){$0$}}%
% STR 2 0 3 2 Black White
% 4 4800 865 4800 965 5 0 0 0
% $1$
\put(48.0000,-9.6500){\makebox(0,0){$1$}}%
% STR 2 0 3 3 Black White
% 4 5200 1065 5200 1165 5 0 0 0
% $1$
\put(52.0000,-11.6500){\makebox(0,0){$1$}}%
% VECTOR 2 0 3 4 Black White
% 2 4590 1300 5390 1300
% 
\special{pn 8}%
\special{pa 4590 1300}%
\special{pa 5390 1300}%
\special{fp}%
\special{sh 1}%
\special{pa 5390 1300}%
\special{pa 5323 1280}%
\special{pa 5337 1300}%
\special{pa 5323 1320}%
\special{pa 5390 1300}%
\special{fp}%
% VECTOR 2 0 3 5 Black White
% 2 4630 1360 4630 860
% 
\special{pn 8}%
\special{pa 4630 1360}%
\special{pa 4630 860}%
\special{fp}%
\special{sh 1}%
\special{pa 4630 860}%
\special{pa 4610 927}%
\special{pa 4630 913}%
\special{pa 4650 927}%
\special{pa 4630 860}%
\special{fp}%
% STR 2 0 3 0 Black White
% 4 6000 1070 6000 1170 5 0 0 0
% $-1$
\put(60.0000,-11.7000){\makebox(0,0){$-1$}}%
% STR 2 0 3 1 Black White
% 4 6400 1070 6400 1170 5 0 0 0
% $0$
\put(64.0000,-11.7000){\makebox(0,0){$0$}}%
% STR 2 0 3 2 Black White
% 4 6000 870 6000 970 5 0 0 0
% $0$
\put(60.0000,-9.7000){\makebox(0,0){$0$}}%
% STR 2 0 3 3 Black White
% 4 6400 870 6400 970 5 0 0 0
% $-1$
\put(64.0000,-9.7000){\makebox(0,0){$-1$}}%
% VECTOR 2 0 3 4 Black White
% 2 5790 1300 6590 1300
% 
\special{pn 8}%
\special{pa 5790 1300}%
\special{pa 6590 1300}%
\special{fp}%
\special{sh 1}%
\special{pa 6590 1300}%
\special{pa 6523 1280}%
\special{pa 6537 1300}%
\special{pa 6523 1320}%
\special{pa 6590 1300}%
\special{fp}%
% VECTOR 2 0 3 5 Black White
% 2 5830 1360 5830 860
% 
\special{pn 8}%
\special{pa 5830 1360}%
\special{pa 5830 860}%
\special{fp}%
\special{sh 1}%
\special{pa 5830 860}%
\special{pa 5810 927}%
\special{pa 5830 913}%
\special{pa 5850 927}%
\special{pa 5830 860}%
\special{fp}%
% LINE 2 0 3 0 Black White
% 2 2400 1330 2400 1270
% 
\special{pn 8}%
\special{pa 2400 1330}%
\special{pa 2400 1270}%
\special{fp}%
% STR 2 0 3 0 Black White
% 4 2000 1080 2000 1180 5 0 0 0
% $j$
\put(20.0000,-11.8000){\makebox(0,0){$j$}}%
% STR 2 0 3 0 Black White
% 4 2800 1400 2800 1500 5 0 0 0
% $i$
\put(28.0000,-15.0000){\makebox(0,0){$i$}}%
% LINE 2 0 3 0 Black White
% 2 2800 1330 2800 1270
% 
\special{pn 8}%
\special{pa 2800 1330}%
\special{pa 2800 1270}%
\special{fp}%
% LINE 2 0 3 0 Black White
% 2 2200 1200 2260 1200
% 
\special{pn 8}%
\special{pa 2200 1200}%
\special{pa 2260 1200}%
\special{fp}%
% LINE 2 0 3 0 Black White
% 2 2200 1000 2260 1000
% 
\special{pn 8}%
\special{pa 2200 1000}%
\special{pa 2260 1000}%
\special{fp}%
\end{picture}}%\\\end{center}\end{figure}
\end{lem}
\begin{proof}
The non-zero coefficients of $\Delta_{K}$ are $\pm1$ via Theorem~\ref{OS1}.
Thus the second statements in Lemma~\ref{alternating} and Lemma~\ref{dplem} tell us that
the possibilities of values $A_{i,j}$ or $B_{i,j}$ around $(i,j)\in {\mathbb Z}^2$ are four patterns above.
\end{proof}
In the same way, for the $A'$-matrix and $B'$-matrix the same conditions are satisfied.
Based on this lemma, we define the non-zero curve.
\begin{defn}[Non-zero curves]
\label{nzc}
Let $K$ be a lens space knot in an $\lz$ or $K_{p,k}$ and let $X$ be $A$ or $B$ respectively.
Then according to the process below, we describe oriented curves on ${\mathbb R}^2$.
Here we regard the $X$-matrix as a function on the lattice points ${\Bbb Z}^2$ in ${\Bbb R}^2$.
\begin{enumerate}
\item[(a)] Draw a horizontal oriented arrow on each lattice point $(i,j)$ with $X_{i,j}\neq 0$.
The direction is the right when $X_{i,j}=1$ and the left when $X_{i,j}=-1$ as below.
Draw nothing on any point $(i,j)$ with $X_{i,j}=0$.
We call the arrow with right direction or left direction {\it positive arrow} or {\it negative arrow} respectively.
\begin{center}\input{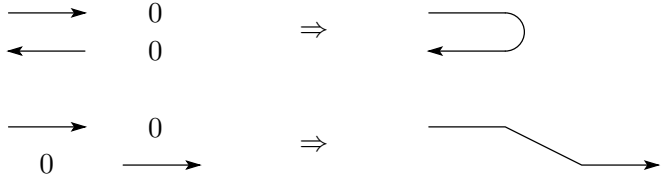}\end{center}
\item[(b)] Connect the adjacent two arrows with the same direction.
Namely, if there exist two arrows on $(i,j)$ and $(i+1,j)$ with the same direction, then
we connect them as below:
\begin{center}%WinTpicVersion4.30d
{\unitlength 0.1in%
\begin{picture}( 24.0000,  1.3200)( 10.0000, -8.2700)%
% STR 2 0 3 0 Black White
% 4 1200 675 1200 775 5 0 0 0
% $1$
\put(12.0000,-7.7500){\makebox(0,0){$1$}}%
% STR 2 0 3 0 Black White
% 4 2800 665 2800 765 5 0 0 0
% $-1$
\put(28.0000,-7.6500){\makebox(0,0){$-1$}}%
% STR 2 0 3 0 Black White
% 4 1600 675 1600 775 5 0 0 0
% $1$
\put(16.0000,-7.7500){\makebox(0,0){$1$}}%
% VECTOR 2 0 3 0 Black White
% 2 1000 800 1800 800
% 
\special{pn 8}%
\special{pa 1000 800}%
\special{pa 1800 800}%
\special{fp}%
\special{sh 1}%
\special{pa 1800 800}%
\special{pa 1733 780}%
\special{pa 1747 800}%
\special{pa 1733 820}%
\special{pa 1800 800}%
\special{fp}%
% VECTOR 2 0 3 0 Black White
% 2 3400 800 2600 800
% 
\special{pn 8}%
\special{pa 3400 800}%
\special{pa 2600 800}%
\special{fp}%
\special{sh 1}%
\special{pa 2600 800}%
\special{pa 2667 820}%
\special{pa 2653 800}%
\special{pa 2667 780}%
\special{pa 2600 800}%
\special{fp}%
% STR 2 0 3 0 Black White
% 4 3200 660 3200 760 5 0 0 0
% $-1$
\put(32.0000,-7.6000){\makebox(0,0){$-1$}}%
\end{picture}}%\end{center}
\item[(c)] For any point $(i,j)$ satisfying $dA_{i,j}=-dA_{i,j+1}=1$ or $dB_{i,j}=-dB_{i,j+1}=1$, 
we connect the corresponding two non-empty arrows around $(i,j)\in {\mathbb Z}^2$ as in the figure below.
The four patterns are the four possibilities in Lemma~\ref{possible}:
\begin{center}%WinTpicVersion4.30d
{\unitlength 0.1in%
\begin{picture}( 45.5000,  5.7500)( 10.5000,-18.3500)%
% STR 2 0 3 0 Black White
% 4 1600 1470 1600 1570 5 0 0 0
% $-1$
\put(16.0000,-15.7000){\makebox(0,0){$-1$}}%
% STR 2 0 3 0 Black White
% 4 2600 1460 2600 1560 5 0 0 0
% $0$
\put(26.0000,-15.6000){\makebox(0,0){$0$}}%
% STR 2 0 3 0 Black White
% 4 2000 1470 2000 1570 5 0 0 0
% $0$
\put(20.0000,-15.7000){\makebox(0,0){$0$}}%
% STR 2 0 3 0 Black White
% 4 3000 1455 3000 1555 5 0 0 0
% $1$
\put(30.0000,-15.5500){\makebox(0,0){$1$}}%
% STR 2 0 3 0 Black White
% 4 1600 1270 1600 1370 5 0 0 0
% $1$
\put(16.0000,-13.7000){\makebox(0,0){$1$}}%
% STR 2 0 3 0 Black White
% 4 2600 1260 2600 1360 5 0 0 0
% $0$
\put(26.0000,-13.6000){\makebox(0,0){$0$}}%
% STR 2 0 3 0 Black White
% 4 2000 1270 2000 1370 5 0 0 0
% $0$
\put(20.0000,-13.7000){\makebox(0,0){$0$}}%
% STR 2 0 3 0 Black White
% 4 3000 1255 3000 1355 5 0 0 0
% $-1$
\put(30.0000,-13.5500){\makebox(0,0){$-1$}}%
% LINE 2 0 3 0 Black White
% 2 1400 1390 1700 1390
% 
\special{pn 8}%
\special{pa 1400 1390}%
\special{pa 1700 1390}%
\special{fp}%
% VECTOR 2 0 3 0 Black White
% 2 1700 1590 1400 1590
% 
\special{pn 8}%
\special{pa 1700 1590}%
\special{pa 1400 1590}%
\special{fp}%
\special{sh 1}%
\special{pa 1400 1590}%
\special{pa 1467 1610}%
\special{pa 1453 1590}%
\special{pa 1467 1570}%
\special{pa 1400 1590}%
\special{fp}%
% ELLIPSE 2 0 3 0 Black White
% 4 1700 1490 1750 1590 1700 1590 1700 1390
% 
\special{pn 8}%
\special{ar 1700 1490 50 100  4.7123890  1.5707963}%
% LINE 2 0 3 0 Black White
% 2 3200 1390 2900 1390
% 
\special{pn 8}%
\special{pa 3200 1390}%
\special{pa 2900 1390}%
\special{fp}%
% VECTOR 2 0 3 0 Black White
% 2 2900 1590 3200 1590
% 
\special{pn 8}%
\special{pa 2900 1590}%
\special{pa 3200 1590}%
\special{fp}%
\special{sh 1}%
\special{pa 3200 1590}%
\special{pa 3133 1570}%
\special{pa 3147 1590}%
\special{pa 3133 1610}%
\special{pa 3200 1590}%
\special{fp}%
% ELLIPSE 2 0 3 0 Black White
% 4 2900 1490 2970 1590 2900 1390 2900 1590
% 
\special{pn 8}%
\special{ar 2900 1490 70 100  1.5707963  4.7123890}%
% STR 2 0 3 0 Black White
% 4 3800 1465 3800 1565 5 0 0 0
% $0$
\put(38.0000,-15.6500){\makebox(0,0){$0$}}%
% STR 2 0 3 0 Black White
% 4 4200 1465 4200 1565 5 0 0 0
% $1$
\put(42.0000,-15.6500){\makebox(0,0){$1$}}%
% STR 2 0 3 0 Black White
% 4 3800 1265 3800 1365 5 0 0 0
% $1$
\put(38.0000,-13.6500){\makebox(0,0){$1$}}%
% STR 2 0 3 0 Black White
% 4 4200 1265 4200 1365 5 0 0 0
% $0$
\put(42.0000,-13.6500){\makebox(0,0){$0$}}%
% STR 2 0 3 0 Black White
% 4 5000 1470 5000 1570 5 0 0 0
% $-1$
\put(50.0000,-15.7000){\makebox(0,0){$-1$}}%
% STR 2 0 3 0 Black White
% 4 5400 1470 5400 1570 5 0 0 0
% $0$
\put(54.0000,-15.7000){\makebox(0,0){$0$}}%
% STR 2 0 3 0 Black White
% 4 5000 1270 5000 1370 5 0 0 0
% $0$
\put(50.0000,-13.7000){\makebox(0,0){$0$}}%
% STR 2 0 3 0 Black White
% 4 5400 1270 5400 1370 5 0 0 0
% $-1$
\put(54.0000,-13.7000){\makebox(0,0){$-1$}}%
% POLYLINE 2 0 3 0 Black White
% 5 3600 1400 3900 1400 4100 1600 4400 1600 4400 1600
% 
\special{pn 8}%
\special{pa 3600 1400}%
\special{pa 3900 1400}%
\special{pa 4100 1600}%
\special{pa 4400 1600}%
\special{fp}%
% POLYLINE 2 0 3 0 Black White
% 5 4800 1600 5100 1600 5300 1400 5600 1400 5600 1400
% 
\special{pn 8}%
\special{pa 4800 1600}%
\special{pa 5100 1600}%
\special{pa 5300 1400}%
\special{pa 5600 1400}%
\special{fp}%
% SARROW 2 0 3 1 Black White
% 2 4840 1600 4800 1600
% 
\special{pn 8}%
\special{pa 4840 1600}%
\special{pa 4800 1600}%
\special{fp}%
\special{sh 1}%
\special{pa 4800 1600}%
\special{pa 4867 1620}%
\special{pa 4853 1600}%
\special{pa 4867 1580}%
\special{pa 4800 1600}%
\special{fp}%
% STR 2 0 3 0 Black White
% 4 1195 1280 1195 1380 5 0 0 0
% $j+1$
\put(11.9500,-13.8000){\makebox(0,0){$j+1$}}%
% STR 2 0 3 0 Black White
% 4 1595 1800 1595 1900 5 0 0 0
% $i-1$
\put(15.9500,-19.0000){\makebox(0,0){$i-1$}}%
% VECTOR 2 0 3 0 Black White
% 2 1425 1760 1425 1260
% 
\special{pn 8}%
\special{pa 1425 1760}%
\special{pa 1425 1260}%
\special{fp}%
\special{sh 1}%
\special{pa 1425 1260}%
\special{pa 1405 1327}%
\special{pa 1425 1313}%
\special{pa 1445 1327}%
\special{pa 1425 1260}%
\special{fp}%
% VECTOR 2 0 3 0 Black White
% 2 1385 1700 2185 1700
% 
\special{pn 8}%
\special{pa 1385 1700}%
\special{pa 2185 1700}%
\special{fp}%
\special{sh 1}%
\special{pa 2185 1700}%
\special{pa 2118 1680}%
\special{pa 2132 1700}%
\special{pa 2118 1720}%
\special{pa 2185 1700}%
\special{fp}%
% LINE 2 0 3 0 Black White
% 2 1595 1730 1595 1670
% 
\special{pn 8}%
\special{pa 1595 1730}%
\special{pa 1595 1670}%
\special{fp}%
% STR 2 0 3 0 Black White
% 4 1195 1480 1195 1580 5 0 0 0
% $j$
\put(11.9500,-15.8000){\makebox(0,0){$j$}}%
% STR 2 0 3 0 Black White
% 4 1995 1800 1995 1900 5 0 0 0
% $i$
\put(19.9500,-19.0000){\makebox(0,0){$i$}}%
% LINE 2 0 3 0 Black White
% 2 1995 1730 1995 1670
% 
\special{pn 8}%
\special{pa 1995 1730}%
\special{pa 1995 1670}%
\special{fp}%
% LINE 2 0 3 0 Black White
% 2 1395 1600 1455 1600
% 
\special{pn 8}%
\special{pa 1395 1600}%
\special{pa 1455 1600}%
\special{fp}%
% LINE 2 0 3 0 Black White
% 2 1395 1400 1455 1400
% 
\special{pn 8}%
\special{pa 1395 1400}%
\special{pa 1455 1400}%
\special{fp}%
% VECTOR 2 0 3 0 Black White
% 2 2425 1760 2425 1260
% 
\special{pn 8}%
\special{pa 2425 1760}%
\special{pa 2425 1260}%
\special{fp}%
\special{sh 1}%
\special{pa 2425 1260}%
\special{pa 2405 1327}%
\special{pa 2425 1313}%
\special{pa 2445 1327}%
\special{pa 2425 1260}%
\special{fp}%
% VECTOR 2 0 3 1 Black White
% 2 2385 1700 3185 1700
% 
\special{pn 8}%
\special{pa 2385 1700}%
\special{pa 3185 1700}%
\special{fp}%
\special{sh 1}%
\special{pa 3185 1700}%
\special{pa 3118 1680}%
\special{pa 3132 1700}%
\special{pa 3118 1720}%
\special{pa 3185 1700}%
\special{fp}%
% LINE 2 0 3 2 Black White
% 2 2595 1730 2595 1670
% 
\special{pn 8}%
\special{pa 2595 1730}%
\special{pa 2595 1670}%
\special{fp}%
% LINE 2 0 3 3 Black White
% 2 2995 1730 2995 1670
% 
\special{pn 8}%
\special{pa 2995 1730}%
\special{pa 2995 1670}%
\special{fp}%
% LINE 2 0 3 4 Black White
% 2 2395 1600 2455 1600
% 
\special{pn 8}%
\special{pa 2395 1600}%
\special{pa 2455 1600}%
\special{fp}%
% LINE 2 0 3 5 Black White
% 2 2395 1400 2455 1400
% 
\special{pn 8}%
\special{pa 2395 1400}%
\special{pa 2455 1400}%
\special{fp}%
% VECTOR 2 0 3 0 Black White
% 2 3625 1760 3625 1260
% 
\special{pn 8}%
\special{pa 3625 1760}%
\special{pa 3625 1260}%
\special{fp}%
\special{sh 1}%
\special{pa 3625 1260}%
\special{pa 3605 1327}%
\special{pa 3625 1313}%
\special{pa 3645 1327}%
\special{pa 3625 1260}%
\special{fp}%
% VECTOR 2 0 3 1 Black White
% 2 3585 1700 4385 1700
% 
\special{pn 8}%
\special{pa 3585 1700}%
\special{pa 4385 1700}%
\special{fp}%
\special{sh 1}%
\special{pa 4385 1700}%
\special{pa 4318 1680}%
\special{pa 4332 1700}%
\special{pa 4318 1720}%
\special{pa 4385 1700}%
\special{fp}%
% LINE 2 0 3 2 Black White
% 2 3795 1730 3795 1670
% 
\special{pn 8}%
\special{pa 3795 1730}%
\special{pa 3795 1670}%
\special{fp}%
% LINE 2 0 3 3 Black White
% 2 4195 1730 4195 1670
% 
\special{pn 8}%
\special{pa 4195 1730}%
\special{pa 4195 1670}%
\special{fp}%
% LINE 2 0 3 4 Black White
% 2 3595 1600 3655 1600
% 
\special{pn 8}%
\special{pa 3595 1600}%
\special{pa 3655 1600}%
\special{fp}%
% LINE 2 0 3 5 Black White
% 2 3595 1400 3655 1400
% 
\special{pn 8}%
\special{pa 3595 1400}%
\special{pa 3655 1400}%
\special{fp}%
% VECTOR 2 0 3 0 Black White
% 2 4825 1760 4825 1260
% 
\special{pn 8}%
\special{pa 4825 1760}%
\special{pa 4825 1260}%
\special{fp}%
\special{sh 1}%
\special{pa 4825 1260}%
\special{pa 4805 1327}%
\special{pa 4825 1313}%
\special{pa 4845 1327}%
\special{pa 4825 1260}%
\special{fp}%
% VECTOR 2 0 3 1 Black White
% 2 4785 1700 5585 1700
% 
\special{pn 8}%
\special{pa 4785 1700}%
\special{pa 5585 1700}%
\special{fp}%
\special{sh 1}%
\special{pa 5585 1700}%
\special{pa 5518 1680}%
\special{pa 5532 1700}%
\special{pa 5518 1720}%
\special{pa 5585 1700}%
\special{fp}%
% LINE 2 0 3 2 Black White
% 2 4995 1730 4995 1670
% 
\special{pn 8}%
\special{pa 4995 1730}%
\special{pa 4995 1670}%
\special{fp}%
% LINE 2 0 3 3 Black White
% 2 5395 1730 5395 1670
% 
\special{pn 8}%
\special{pa 5395 1730}%
\special{pa 5395 1670}%
\special{fp}%
% LINE 2 0 3 4 Black White
% 2 4795 1600 4855 1600
% 
\special{pn 8}%
\special{pa 4795 1600}%
\special{pa 4855 1600}%
\special{fp}%
% LINE 2 0 3 5 Black White
% 2 4795 1400 4855 1400
% 
\special{pn 8}%
\special{pa 4795 1400}%
\special{pa 4855 1400}%
\special{fp}%
\end{picture}}%\end{center}
\end{enumerate}
Thus we can make oriented curves on ${\Bbb R}^2$.
We call the oriented curves non-zero curves.
\end{defn}
The curves are weakly-decreasing about $j$.
Notice that on no two points $(i,j)$ and $(i+1,j)$ opposite arrows are drawn, 
because values of $dA(x)$ are $0$ or $\pm1$.
\begin{lem}
\label{noendunbound}
Let $K$ be a lens space knot in an $\lz$ with $2g(K)<p$ or $K_{p,k}$ with the slope $p>1$.
Let $\Gamma$ be the non-zero curves of $K$.
Then, $\Gamma$ is submanifolds in ${\mathbb R}^2$,
namely, the manifolds are simple and no end points.
\end{lem}
\begin{proof}
We prove that any non-zero curve is simple and has no end points.
From the construction, the curve is simple since $|dA_{i,j}|\le 1$.
If the curve has an end at $(i,j)$, then $dA_{i,j}\neq 0$ or $dA_{i+1,j}\neq 0$ holds.
From Lemma~\ref{alternating}, in the former case, $dA_{i,j+1}=-dA_{i,j}\neq 0$ or $dA_{i,j-1}=-dA_{i,j}\neq 0$.
In the latter case, $dA_{i+1,j+1}=-dA_{i+1,j}\neq 0$ or $dA_{i+1,j}=-dA_{i+1,j}\neq 0$ holds.
Due to the definition of non-zero curve, these pairs of non-zero values of $dA$-function 
correspond to the $j$-level decreasing connection as in the process (c) (see Figure~\ref{co}).
This is contradiction.
\begin{figure}[hbtp]
\begin{center}%WinTpicVersion4.32a
{\unitlength 0.1in%
\begin{picture}(34.0000,8.6200)(8.0000,-13.9700)%
% VECTOR 2 0 3 0 Black White  
% 2 800 600 1200 600
% 
\special{pn 8}%
\special{pa 800 600}%
\special{pa 1200 600}%
\special{fp}%
\special{sh 1}%
\special{pa 1200 600}%
\special{pa 1133 580}%
\special{pa 1147 600}%
\special{pa 1133 620}%
\special{pa 1200 600}%
\special{fp}%
% VECTOR 2 0 3 0 Black White  
% 2 1200 800 800 800
% 
\special{pn 8}%
\special{pa 1200 800}%
\special{pa 800 800}%
\special{fp}%
\special{sh 1}%
\special{pa 800 800}%
\special{pa 867 820}%
\special{pa 853 800}%
\special{pa 867 780}%
\special{pa 800 800}%
\special{fp}%
% STR 2 0 3 0 Black White  
% 4 2400 600 2400 700 5 0 0 0
% $\Rightarrow$
\put(24.0000,-7.0000){\makebox(0,0){$\Rightarrow$}}%
% VECTOR 2 0 3 0 Black White  
% 4 800 1197 1200 1197 1400 1397 1800 1397
% 
\special{pn 8}%
\special{pa 800 1197}%
\special{pa 1200 1197}%
\special{fp}%
\special{sh 1}%
\special{pa 1200 1197}%
\special{pa 1133 1177}%
\special{pa 1147 1197}%
\special{pa 1133 1217}%
\special{pa 1200 1197}%
\special{fp}%
\special{pa 1400 1397}%
\special{pa 1800 1397}%
\special{fp}%
\special{sh 1}%
\special{pa 1800 1397}%
\special{pa 1733 1377}%
\special{pa 1747 1397}%
\special{pa 1733 1417}%
\special{pa 1800 1397}%
\special{fp}%
% STR 2 0 3 0 Black White  
% 4 2400 1197 2400 1297 5 0 0 0
% $\Rightarrow$
\put(24.0000,-12.9700){\makebox(0,0){$\Rightarrow$}}%
% POLYLINE 2 0 3 0 Black White  
% 6 3000 1197 3200 1197 3400 1197 3800 1397 4200 1397 4200 1397
% 
\special{pn 8}%
\special{pa 3000 1197}%
\special{pa 3400 1197}%
\special{pa 3800 1397}%
\special{pa 4200 1397}%
\special{fp}%
% SARROW 2 0 3 1 Black White  
% 2 4160 1397 4200 1397
% 
\special{pn 8}%
\special{pa 4160 1397}%
\special{pa 4200 1397}%
\special{fp}%
\special{sh 1}%
\special{pa 4200 1397}%
\special{pa 4133 1377}%
\special{pa 4147 1397}%
\special{pa 4133 1417}%
\special{pa 4200 1397}%
\special{fp}%
% LINE 2 0 3 0 Black White  
% 2 3000 600 3400 600
% 
\special{pn 8}%
\special{pa 3000 600}%
\special{pa 3400 600}%
\special{fp}%
% VECTOR 2 0 3 0 Black White  
% 2 3400 800 3000 800
% 
\special{pn 8}%
\special{pa 3400 800}%
\special{pa 3000 800}%
\special{fp}%
\special{sh 1}%
\special{pa 3000 800}%
\special{pa 3067 820}%
\special{pa 3053 800}%
\special{pa 3067 780}%
\special{pa 3000 800}%
\special{fp}%
% CIRCLE 2 0 3 0 Black White  
% 4 3400 700 3400 800 3400 800 3400 400
% 
\special{pn 8}%
\special{ar 3400 700 100 100 4.7123890 1.5707963}%
% STR 2 0 3 0 Black White  
% 4 1570 500 1570 600 5 0 0 0
% 0
\put(15.7000,-6.0000){\makebox(0,0){0}}%
% STR 2 0 3 0 Black White  
% 4 1570 700 1570 800 5 0 0 0
% 0
\put(15.7000,-8.0000){\makebox(0,0){0}}%
% STR 2 0 3 0 Black White  
% 4 1570 1100 1570 1200 5 0 0 0
% 0
\put(15.7000,-12.0000){\makebox(0,0){0}}%
% STR 2 0 3 0 Black White  
% 4 1000 1290 1000 1390 5 0 0 0
% 0
\put(10.0000,-13.9000){\makebox(0,0){0}}%
\end{picture}}%
\caption{Connection of the pair of non-zero values of $dA$-function.}
\label{co}
\end{center}
\end{figure}
\end{proof}

Consider an $A$ or $B$-matrix for a lens space knot in an $\lz$ or $K_{p,k}$ in $Y_{p,k}$ respectively.
The symmetry of Alexander polynomial tells us the following:
\begin{prop}
\label{symmetry}
A non-zero curve for $A$- or $B$-matrix has a $180^{\circ}$-rotation symmetry about a point in ${\Bbb R}^2$.
\end{prop}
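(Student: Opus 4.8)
The plan is to trace the point symmetry of the Alexander polynomial through the construction of the non-zero curve: I will exhibit a point reflection of ${\Bbb R}^2$ that preserves the $A$-matrix (resp.\ the $B$-matrix) and then check that it is compatible with the drawing rules (a)--(c) of Definition~\ref{nzc}. The only fact about the knot that I need is the symmetry of the Alexander polynomial: $a_i=a_{-i}$ for a type-(A) knot, and $\Delta_{K_{p,k}}(t)\doteq\Delta_{K_{p,k}}(t^{-1})$ for a type-(B) knot.

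First, for a type-(A) knot: from $a_i=a_{-i}$ and $\bar{a}_\ell=\sum_{j\equiv\ell}a_j$ one gets $\bar{a}_\ell=\bar{a}_{-\ell}$, and then, using $A_{i,j}=\bar{a}_{-j-k_2(i+c)}$, a one-line computation yields
$$A_{-2c-i,\,-j}=\bar{a}_{\,j+k_2(i+c)}=\bar{a}_{-j-k_2(i+c)}=A_{i,j}.$$
Thus the $A$-matrix is invariant under the point reflection $\sigma\colon(i,j)\mapsto(-2c-i,-j)$ about $P=(-c,0)$ (note $c\in{\Bbb Z}$). Differencing this identity in the first coordinate gives the twisted anti-invariance $dA_{i,j}=-dA_{-2c-i-1,\,-j}$ of the $dA$-matrix.

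Next I would show the non-zero curve is carried to itself by $\sigma$ as a subset of ${\Bbb R}^2$ — note that $\sigma$, being rotation by $\pi$, reverses horizontal arrow directions and hence reverses the orientation of the curve. Rule (a): $\sigma$ sends the arrow on a point $(i,j)$ to a horizontal arrow on $\sigma(i,j)$, and since $A_{\sigma(i,j)}=A_{i,j}$ this is, up to orientation, exactly the arrow rule (a) places on $\sigma(i,j)$. Rule (b): $\sigma$ takes a horizontally adjacent pair of equal-sign arrows to another such pair, so rule-(b) connections are preserved. Rule (c): a corner point $(i,j)$ is characterized by $dA_{i,j}=-e$ and $dA_{i,j+1}=e$; the twisted anti-invariance of $dA$ gives $dA_{-2c-i-1,\,-j-1}=-e$ and $dA_{-2c-i-1,\,-j}=e$, so $(i',j'):=(-2c-i-1,-j-1)$ is again a corner point, and $\sigma$ carries the local configuration of Lemma~\ref{possible} at $(i,j)$ onto the one at $(i',j')$. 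Hence every arrow and every connecting segment of the curve is mapped by $\sigma$ onto one of the same, so the curve is $\sigma$-invariant — this is the asserted symmetry about the point $P$. (For type-(A) the curve is moreover periodic, so it has a whole lattice of centers of symmetry, but one suffices.)

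For a type-(B) knot the argument is the same once $a_i=a_{-i}$ is replaced by the relation $b_\ell=b_{N-\ell}$, where $N$ is the sum of the top and bottom exponents occurring in (\ref{ISTformulaitself}); this follows from $\Delta_{K_{p,k}}(t)\doteq\Delta_{K_{p,k}}(t^{-1})$. Then $B_{i,j}=b_{i-jk_2}$ gives $B_{N-i,\,-j}=B_{i,j}$, so the relevant reflection is about $(N/2,0)$, and one finishes exactly as above, using Lemma~\ref{dplem} in place of Lemma~\ref{alternating}. The one genuinely fiddly step — the main obstacle — is the last check for rule (c): one must confirm that $\sigma$ permutes among themselves the finitely many local patterns listed in Lemma~\ref{possible} and is compatible with how (b) and (c) splice arrows together. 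This is a routine finite verification, but it requires care with the sign $e=\operatorname{sgn}(k_2)$ and with the fact, already used above, that $\sigma$ preserves matrix values while reversing horizontal orientation.
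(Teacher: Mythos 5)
Your proposal is correct, and it is worth noting that the paper offers essentially no proof of this proposition at all: it is introduced with the single sentence ``The definition of non-zero curve immediately implies the following,'' so your argument is a genuine filling-in rather than a reproduction. The route you take is the natural one and it checks out. The computation $A_{-2c-i,-j}=\bar a_{j+k_2(i+c)}=\bar a_{-j-k_2(i+c)}=A_{i,j}$ is right (and $c=\tfrac{(k+1-p)(k-1)}{2}$ is an integer since $\gcd(p,k)=1$ forces the product to be even, so the center $(-c,0)$ is a lattice point), the induced relation $dA_{i,j}=-dA_{-2c-i-1,-j}$ is right, and the ``routine finite verification'' you defer really is routine: under the value-preserving rotation the $2\times2$ block at $(i,j)$ goes to the block at $(-2c-i-1,-j-1)$, and the four local patterns of Lemma~\ref{possible} are permuted (the two U-turn patterns are swapped, the two diagonal-step patterns are fixed), with the drawn arcs carried onto the drawn arcs. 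The type-(B) case via $b_\ell=b_{N-\ell}$ works the same way.

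One small caveat you should make explicit: as written, your argument shows that the \emph{union} of all non-zero curves is $\sigma$-invariant, whereas the proposition is later applied to a single connected component (e.g.\ in the $|k_2|=4$ case of the seven-coefficient classification). To conclude that each component is itself point-symmetric you need $\sigma$ to preserve each component rather than permute components; this follows because $\sigma$ carries each non-zero region to itself (a one-line check: $(i_0,j_0)$ satisfies $-j_0-k_2(i_0+c)\equiv 0 \bmod p$ iff $\sigma(i_0,j_0)$ does, and $\sigma(i_0,j_0)$ lies on the lattice $\{(i_0+l,\,j_0-k_2l)\}$ of that same region) together with the Traversable Theorem, which says each region contains exactly one component. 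Since the Traversable Theorem is stated after this proposition in the paper, you should either restrict the claim to the union of curves or note that the single-component refinement becomes available once Theorem~\ref{non-zerocor} is proved. This is a matter of bookkeeping, not a flaw in the idea.
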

\begin{proof}
Since we have 
\begin{eqnarray*}
A_{i,j+ck_2}&=&\bar{a}_{j+ck_2+k_2(i-c)}=\bar{a}_{-(j+ck_2)-k_2(i-c)}\\
&=&\bar{a}_{(-j+ck_2)+k_2(-i-c)}=A_{-i,-j+ck_2},
\end{eqnarray*}
$180^\circ$-rotation of $A$-matrix with the center $(0,ck_2)$ gives the same matrix.
$B$-matrix has also a similar rotation symmetry.
The proof is skipped.
\end{proof}
In the end, we can get some symmetric unbounded simple curves with orientation and no end points on ${\Bbb R}^2$.
%Furthermore, in the case of $A$-matrix, any two non-zero curves are congruent each other by some parallel translation.
%This will be proven later.

\subsection{The $2g(K)=p$ case.}%%%%%%%%%%%%%%%%%%%
Consider the case where $K$ is a lens space knot in an $\lz$ with $2g(K)=p$.
Even in this case, Lemma~\ref{possible}	holds.
Consider the non-constant local behavior of the $A$-function.
We need consider the points with $\bar{a}_{g}(K)=2$.
\begin{lem}
\label{possible2}
Let $A_{i,j}$ be the $A$-matrix of a lens space knot $K$ in an $\lz$ with parameter $(p,k)$ and $2g(K)=p$.
Suppose that $A_{i_0,j_0}=2$ for $(i_0,j_0)\in {\mathbb Z}^2$.
Then the values of $A$-matrix around $(i_0,j_0)$ have the following local behaviors:
\begin{figure}[htbp]\begin{center}
%WinTpicVersion4.30d
{\unitlength 0.1in%
\begin{picture}( 16.9500,  7.2700)( 11.7000,-16.2700)%
% STR 2 0 3 0 Black White
% 4 2205 1262 2205 1362 5 0 0 0
% $-1$
\put(22.0500,-13.6200){\makebox(0,0){$-1$}}%
% STR 2 0 3 0 Black White
% 4 1355 1072 1355 1172 5 0 0 0
% $j_0$
\put(13.5500,-11.7200){\makebox(0,0){$j_0$}}%
% STR 2 0 3 0 Black White
% 4 2195 1592 2195 1692 5 0 0 0
% $i_0$
\put(21.9500,-16.9200){\makebox(0,0){$i_0$}}%
% VECTOR 2 0 3 0 Black White
% 2 1625 1552 1625 1052
% 
\special{pn 8}%
\special{pa 1625 1552}%
\special{pa 1625 1052}%
\special{fp}%
\special{sh 1}%
\special{pa 1625 1052}%
\special{pa 1605 1119}%
\special{pa 1625 1105}%
\special{pa 1645 1119}%
\special{pa 1625 1052}%
\special{fp}%
% VECTOR 2 0 3 0 Black White
% 2 1585 1492 2865 1492
% 
\special{pn 8}%
\special{pa 1585 1492}%
\special{pa 2865 1492}%
\special{fp}%
\special{sh 1}%
\special{pa 2865 1492}%
\special{pa 2798 1472}%
\special{pa 2812 1492}%
\special{pa 2798 1512}%
\special{pa 2865 1492}%
\special{fp}%
% STR 2 0 3 0 Black White
% 4 2595 1262 2595 1362 5 0 0 0
% $0$
\put(25.9500,-13.6200){\makebox(0,0){$0$}}%
% STR 2 0 3 0 Black White
% 4 2195 1062 2195 1162 5 0 0 0
% $2$
\put(21.9500,-11.6200){\makebox(0,0){$2$}}%
% STR 2 0 3 0 Black White
% 4 2595 1062 2595 1162 5 0 0 0
% $1$
\put(25.9500,-11.6200){\makebox(0,0){$1$}}%
% STR 2 0 3 0 Black White
% 4 1797 1065 1797 1165 5 0 0 0
% $1$
\put(17.9700,-11.6500){\makebox(0,0){$1$}}%
% STR 2 0 3 0 Black White
% 4 2197 865 2197 965 5 0 0 0
% $-1$
\put(21.9700,-9.6500){\makebox(0,0){$-1$}}%
% STR 2 0 3 0 Black White
% 4 1797 865 1797 965 5 0 0 0
% $0$
\put(17.9700,-9.6500){\makebox(0,0){$0$}}%
% LINE 2 0 3 0 Black White
% 2 2195 1522 2195 1462
% 
\special{pn 8}%
\special{pa 2195 1522}%
\special{pa 2195 1462}%
\special{fp}%
% STR 2 0 3 0 Black White
% 4 1355 1272 1355 1372 5 0 0 0
% $j_0-1$
\put(13.5500,-13.7200){\makebox(0,0){$j_0-1$}}%
% STR 2 0 3 0 Black White
% 4 2595 1592 2595 1692 5 0 0 0
% $i_0+1$
\put(25.9500,-16.9200){\makebox(0,0){$i_0+1$}}%
% LINE 2 0 3 0 Black White
% 2 2595 1522 2595 1462
% 
\special{pn 8}%
\special{pa 2595 1522}%
\special{pa 2595 1462}%
\special{fp}%
% LINE 2 0 3 0 Black White
% 2 1595 1392 1655 1392
% 
\special{pn 8}%
\special{pa 1595 1392}%
\special{pa 1655 1392}%
\special{fp}%
% LINE 2 0 3 0 Black White
% 2 1595 1192 1655 1192
% 
\special{pn 8}%
\special{pa 1595 1192}%
\special{pa 1655 1192}%
\special{fp}%
% STR 2 0 3 0 Black White
% 4 1792 1592 1792 1692 5 0 0 0
% $i_0-1$
\put(17.9200,-16.9200){\makebox(0,0){$i_0-1$}}%
% LINE 2 0 3 0 Black White
% 2 1792 1522 1792 1462
% 
\special{pn 8}%
\special{pa 1792 1522}%
\special{pa 1792 1462}%
\special{fp}%
\end{picture}}%\caption{The local behavior around $(i_0,j_0)$.}\label{4possible4}\end{center}\end{figure}
%If $dA_{i_0,j_0}\neq 0$ and $dA_{i_0,j_0+1}\neq 0$, then the values of $A$-matrix around $(i_0,j_0)$ have one of 
%the local behaviors as in {\sc Figure}~\ref{4possible4}:
\end{lem}
\begin{proof}
Suppose that $A_{i_0,j_0}=2$.
Then $j_0+k_2(i_0-c)\equiv g\bmod p$.
If $A_{i_0+1,j_0},A_{i_0-1,j_0}\neq 2$, because if $A_{i_0+1,j_0}=2$, we have $j_0+k_2(i_0+1-c)=g\bmod p$ and  this means $p=1$.
Thus we have $A_{i_0+1,j_0}=1$ and $A_{i_0-1,j_0}=1$ due to $|dA_{i_0,j_0}|\le 1$ and symmetry of the Alexander polynomial. 
If $A_{i_0,j_0+1}=0$ then $A_{i_0-1,j_0+1}=1$.
The coefficients $A_{i_0-1,j_0}=A_{i_0-1,j_0+1}=1$ are inconsistent with the alternating condition in (\ref{OScond}).
See Figure~\ref{for}.
Thus $A_{i_0,j_0\pm1}=-1$ holds.
Here we use the symmetry of $A$-matrix.
%Suppose $dA_{i,j}=-dA_{i,j+1}=-e$ holds, and one of $(i,j)$, $(i+1,j)$, $(i,j+1)$ and $(i+1,j+1)$
%contains $2$.
%Since the $A$-function has the symmetry about $a_{\frac{p}{2}}$ the behavior of the coefficient $2$ is one of the two possibilities in Figure~\ref{4possible4}.
%Because the other possibilities as below can find an adjacent coefficient pair $1,1$ against the condition (\ref{OScond}).
\begin{figure}[htbp]\begin{center}
\begin{center}%WinTpicVersion4.30d
{\unitlength 0.1in%
\begin{picture}( 13.2500,  6.6200)( 16.9000,-18.5200)%
% STR 2 0 3 0 Black White
% 4 2300 1552 2300 1652 5 0 0 0
% $0$
\put(23.0000,-16.5200){\makebox(0,0){$0$}}%
% VECTOR 2 0 3 0 Black White
% 2 1730 1852 1730 1352
% 
\special{pn 8}%
\special{pa 1730 1852}%
\special{pa 1730 1352}%
\special{fp}%
\special{sh 1}%
\special{pa 1730 1352}%
\special{pa 1710 1419}%
\special{pa 1730 1405}%
\special{pa 1750 1419}%
\special{pa 1730 1352}%
\special{fp}%
% VECTOR 2 0 3 0 Black White
% 2 1690 1792 3015 1792
% 
\special{pn 8}%
\special{pa 1690 1792}%
\special{pa 3015 1792}%
\special{fp}%
\special{sh 1}%
\special{pa 3015 1792}%
\special{pa 2948 1772}%
\special{pa 2962 1792}%
\special{pa 2948 1812}%
\special{pa 3015 1792}%
\special{fp}%
% STR 2 0 3 0 Black White
% 4 2700 1547 2700 1647 5 0 0 0
% $1$
\put(27.0000,-16.4700){\makebox(0,0){$1$}}%
% STR 2 0 3 0 Black White
% 4 2300 1352 2300 1452 5 0 0 0
% $2$
\put(23.0000,-14.5200){\makebox(0,0){$2$}}%
% STR 2 0 3 0 Black White
% 4 2700 1347 2700 1447 5 0 0 0
% $1$
\put(27.0000,-14.4700){\makebox(0,0){$1$}}%
% STR 2 0 3 0 Black White
% 4 1900 1355 1900 1455 5 0 0 0
% $1$
\put(19.0000,-14.5500){\makebox(0,0){$1$}}%
% STR 2 0 3 0 Black White
% 4 1900 1155 1900 1255 5 0 0 0
% $1$
\put(19.0000,-12.5500){\makebox(0,0){$1$}}%
% STR 2 0 3 0 Black White
% 4 2300 1155 2300 1255 5 0 0 0
% $0$
\put(23.0000,-12.5500){\makebox(0,0){$0$}}%
\end{picture}}%\caption{The case of $A_{i_0,j_0\pm 1}=0$.}\label{for}\end{center}
\end{center}\end{figure}
\end{proof}
\begin{defn}[Non-zero curve of a lens space knot in an $\lz$ with $2g(K)=p$.]
Let $A_{i,j}$ be $A$-matrix of a lens space knot $K$ in an $\lz$ with parameter $(p,k)$ and with $2g(K)=p$.
Doing processes (a'), (c') in addition to (a), (b) and (c) in Definition~\ref{nzc}, we obtain non-zero curves.
Here, we state (a') and (c') as below.
\begin{enumerate}
\item[(a')] Draw a horizontal double arrow at the values $2$ as follows:
\begin{center}%WinTpicVersion4.30d
{\unitlength 0.1in%
\begin{picture}(  3.2000,  1.4000)( 18.8000,-12.6400)%
% STR 2 0 3 0 Black White
% 4 2000 1100 2000 1200 5 0 0 0
% $2$
\put(20.0000,-12.0000){\makebox(0,0){$2$}}%
% VECTOR 2 0 3 0 Black White
% 2 1880 1244 2200 1244
% 
\special{pn 8}%
\special{pa 1880 1244}%
\special{pa 2200 1244}%
\special{fp}%
\special{sh 1}%
\special{pa 2200 1244}%
\special{pa 2133 1224}%
\special{pa 2147 1244}%
\special{pa 2133 1264}%
\special{pa 2200 1244}%
\special{fp}%
% VECTOR 2 0 3 0 Black White
% 2 1880 1164 2200 1164
% 
\special{pn 8}%
\special{pa 1880 1164}%
\special{pa 2200 1164}%
\special{fp}%
\special{sh 1}%
\special{pa 2200 1164}%
\special{pa 2133 1144}%
\special{pa 2147 1164}%
\special{pa 2133 1184}%
\special{pa 2200 1164}%
\special{fp}%
\end{picture}}%\end{center}
\item[(c')] Connect the arrows around $(i,j)$ with $A_{i_0,j_0}=2$ as below.
\begin{figure}[htbp]\begin{center}
%WinTpicVersion4.30d
{\unitlength 0.1in%
\begin{picture}( 16.5500,  7.2700)( 16.1000,-16.3500)%
% STR 2 0 3 0 Black White
% 4 2605 1270 2605 1370 5 0 0 0
% $-1$
\put(26.0500,-13.7000){\makebox(0,0){$-1$}}%
% STR 2 0 3 0 Black White
% 4 1795 1080 1795 1180 5 0 0 0
% $j_0$
\put(17.9500,-11.8000){\makebox(0,0){$j_0$}}%
% STR 2 0 3 0 Black White
% 4 2595 1600 2595 1700 5 0 0 0
% $i_0$
\put(25.9500,-17.0000){\makebox(0,0){$i_0$}}%
% VECTOR 2 0 3 0 Black White
% 2 2025 1560 2025 1060
% 
\special{pn 8}%
\special{pa 2025 1560}%
\special{pa 2025 1060}%
\special{fp}%
\special{sh 1}%
\special{pa 2025 1060}%
\special{pa 2005 1127}%
\special{pa 2025 1113}%
\special{pa 2045 1127}%
\special{pa 2025 1060}%
\special{fp}%
% VECTOR 2 0 3 0 Black White
% 2 1985 1500 3265 1500
% 
\special{pn 8}%
\special{pa 1985 1500}%
\special{pa 3265 1500}%
\special{fp}%
\special{sh 1}%
\special{pa 3265 1500}%
\special{pa 3198 1480}%
\special{pa 3212 1500}%
\special{pa 3198 1520}%
\special{pa 3265 1500}%
\special{fp}%
% STR 2 0 3 0 Black White
% 4 2995 1270 2995 1370 5 0 0 0
% $0$
\put(29.9500,-13.7000){\makebox(0,0){$0$}}%
% STR 2 0 3 0 Black White
% 4 2595 1070 2595 1170 5 0 0 0
% $2$
\put(25.9500,-11.7000){\makebox(0,0){$2$}}%
% STR 2 0 3 0 Black White
% 4 2995 1070 2995 1170 5 0 0 0
% $1$
\put(29.9500,-11.7000){\makebox(0,0){$1$}}%
% STR 2 0 3 0 Black White
% 4 2197 1073 2197 1173 5 0 0 0
% $1$
\put(21.9700,-11.7300){\makebox(0,0){$1$}}%
% STR 2 0 3 0 Black White
% 4 2597 873 2597 973 5 0 0 0
% $-1$
\put(25.9700,-9.7300){\makebox(0,0){$-1$}}%
% STR 2 0 3 0 Black White
% 4 2197 873 2197 973 5 0 0 0
% $0$
\put(21.9700,-9.7300){\makebox(0,0){$0$}}%
% LINE 2 0 3 0 Black White
% 2 2595 1530 2595 1470
% 
\special{pn 8}%
\special{pa 2595 1530}%
\special{pa 2595 1470}%
\special{fp}%
% STR 2 0 3 0 Black White
% 4 1795 1280 1795 1380 5 0 0 0
% $j_0-1$
\put(17.9500,-13.8000){\makebox(0,0){$j_0-1$}}%
% STR 2 0 3 0 Black White
% 4 2995 1600 2995 1700 5 0 0 0
% $i_0+1$
\put(29.9500,-17.0000){\makebox(0,0){$i_0+1$}}%
% LINE 2 0 3 0 Black White
% 2 2995 1530 2995 1470
% 
\special{pn 8}%
\special{pa 2995 1530}%
\special{pa 2995 1470}%
\special{fp}%
% LINE 2 0 3 0 Black White
% 2 1995 1400 2055 1400
% 
\special{pn 8}%
\special{pa 1995 1400}%
\special{pa 2055 1400}%
\special{fp}%
% LINE 2 0 3 0 Black White
% 2 1995 1200 2055 1200
% 
\special{pn 8}%
\special{pa 1995 1200}%
\special{pa 2055 1200}%
\special{fp}%
% VECTOR 2 0 3 0 Black White
% 2 2470 1155 3175 1155
% 
\special{pn 8}%
\special{pa 2470 1155}%
\special{pa 3175 1155}%
\special{fp}%
\special{sh 1}%
\special{pa 3175 1155}%
\special{pa 3108 1135}%
\special{pa 3122 1155}%
\special{pa 3108 1175}%
\special{pa 3175 1155}%
\special{fp}%
% VECTOR 2 0 3 0 Black White
% 2 2770 1410 2470 1410
% 
\special{pn 8}%
\special{pa 2770 1410}%
\special{pa 2470 1410}%
\special{fp}%
\special{sh 1}%
\special{pa 2470 1410}%
\special{pa 2537 1430}%
\special{pa 2523 1410}%
\special{pa 2537 1390}%
\special{pa 2470 1410}%
\special{fp}%
% ELLIPSE 2 0 3 0 Black White
% 4 2770 1305 2810 1405 2770 1860 2770 715
% 
\special{pn 8}%
\special{ar 2770 1305 40 100  4.7123890  1.5707963}%
% VECTOR 2 0 3 0 Black White
% 2 2070 1200 2775 1200
% 
\special{pn 8}%
\special{pa 2070 1200}%
\special{pa 2775 1200}%
\special{fp}%
\special{sh 1}%
\special{pa 2775 1200}%
\special{pa 2708 1180}%
\special{pa 2722 1200}%
\special{pa 2708 1220}%
\special{pa 2775 1200}%
\special{fp}%
% VECTOR 2 0 3 0 Black White
% 2 2777 949 2477 949
% 
\special{pn 8}%
\special{pa 2777 949}%
\special{pa 2477 949}%
\special{fp}%
\special{sh 1}%
\special{pa 2477 949}%
\special{pa 2544 969}%
\special{pa 2530 949}%
\special{pa 2544 929}%
\special{pa 2477 949}%
\special{fp}%
% ELLIPSE 2 0 3 0 Black White
% 4 2475 1050 2435 1150 2475 460 2475 1605
% 
\special{pn 8}%
\special{ar 2475 1050 40 100  1.5707963  4.7123890}%
% STR 2 0 3 0 Black White
% 4 2192 1600 2192 1700 5 0 0 0
% $i_0-1$
\put(21.9200,-17.0000){\makebox(0,0){$i_0-1$}}%
% LINE 2 0 3 0 Black White
% 2 2192 1530 2192 1470
% 
\special{pn 8}%
\special{pa 2192 1530}%
\special{pa 2192 1470}%
\special{fp}%
\end{picture}}%\\\end{center}\end{figure}
\end{enumerate}
\end{defn}
Notice that the length of any double arrow is at most one.
\subsection{Regions containing non-zero curves.}%%%%%%%%%%%%%%%%%%%%%%%%%%
In the following, we investigate a domain on ${\mathbb R}^2$ in which the arrow lies.
Here we define {\it a closed $\epsilon$-neighborhood} of $(i',j')$ to be $\{(x,y)\in {\Bbb R}^2|\ \max\{|x-i'|,|y-j'|\}\le \epsilon\}$.
\begin{defn}[Non-zero regions: $\mathcal{N}^{A,m},\mathcal{N}^B$]
\label{Alexander}
Let $K$ be a lens space knot in an $\lz$ with the parameter $(p,k)$ the $d=\deg(\Delta_K(t))$.
For $j_0=-ck_2+mp$ we denote by ${\mathcal N}_0^{A,m}$ the union of closed $\frac{1}{2}$-neighborhood of points
\begin{equation}
\label{orireg}
\{(0,j_0-d),(0,j_0-d+1),\cdots,(0,j_0+d)\}.
\end{equation}

Let $K$ be $K_{p,k}$ in $Y_{p,k}$ with the parameter $(p,k)$ with $d_\text{top}$ and $d_{\text{bottom}}$ the top and bottom degree respectively of the polynomial of right hand side of (\ref{ISTformulaitself}).
We denote by ${\mathcal N}_0^B$ the union of closed $\frac{1}{2}$-neighborhood of points of 
$$\{(0,d_{\text{bottom}}),(0,d_{\text{bottom}}+1),\cdots,(0,d_{\text{top}})\}.$$

In the former case, let ${\mathcal N}_l^{A,m}$ denote the parallel transform $\{(x+l,y-k_2l)\in{\Bbb R}^2|(x,y)\in {\mathcal N}_0^{A,m}\}$ by $(l,-k_2l)$ for $l\in {\mathbb Z}$.
In the latter case, let ${\mathcal N}_l^B$ denote $\{(x+l,y-k_2l)\in{\Bbb R}^2|(x,y)\in {\mathcal N}_0^B\}$.
We call the unions
$${\mathcal N}^{A,m}=\displaystyle{\cup_{l\in {\Bbb Z}}{\mathcal N}_l^{A,m}},\  {\mathcal N}^B=\displaystyle{\cup_{l\in {\Bbb Z}}{\mathcal N}_l^B}$$
a non-zero region.
\end{defn}
Note that $\mathcal{N}^{A,m+1}$ is the parallel transform of $\mathcal{N}^{A,m}$ by $(0,p)$.
In the same way we can define $\mathcal{N}^{A',m}$ and $\mathcal{N}^{B'}$.
\begin{lem}
\label{contained}
Let $\gamma$ be one component non-zero curve of a lens space knot in an $\lz$ or $K_{p,k}$ in $Y_{p,k}$.
Then $\gamma$ lies in a non-zero region.
\end{lem}
\begin{proof}
By the definition of non-zero region any non-zero coefficients of $K_{p,k}$ is in a non-zero region.

Let $\mathcal{N}^{A,m}$ be a non-zero region of a lens space knot $K$ in an $\lz$ with parameter $(p,k)$.\\
({\bf Case 1.}) 
Suppose that a non-zero curve is passing from $\mathcal{N}^{A,m-1}$ to $\mathcal{N}^{A,m}$ as in Figure~\ref{transverse}.
Let $\gamma$ be a segment in the curve which is connecting two 1s in the both side.
Let $l$ be the $i$-coordinate of the left $1$ on $\gamma$.
Let $s$ be the union of the right boundary segment of $\mathcal{N}^{A,m-1}_l$ and left boundary segment of $\mathcal{N}^{A,m}_{l+1}$.

Consider the nearest and upper $-1$ (with respect to $j$-coordinate) to the right $1$ on $\gamma$.
Suppose that the $j$-coordinate of the $-1$ is higher than the bottom coefficient $a_g=1$ in $\mathcal{N}^{A,m}_{l}$.
Then the curve is passing from $\mathcal{N}^{A,m}$ to $\mathcal{N}^{A,m-1}$ (see the first picture in Figure~\ref{transverse}).
Because the $-1$ is the next non-zero coefficient to the $\gamma$.
The non-zero curve inlcuding the bottom coefficient in $\mathcal{N}^{A,m}_{l}$ has to have an end point.
Because the non-zero curve is monotone-decreasing (in the wider sense), the previous point of the bottom coefficient
does not exist.
This contradicts to Lemma~\ref{noendunbound}.
\begin{figure}[thpb]
\begin{center}
%WinTpicVersion4.30d
{\unitlength 0.1in%
\begin{picture}( 45.6500, 14.2500)( 17.5000,-19.5500)%
% STR 2 0 3 0 Black White
% 4 3695 1028 3695 1170 3 0 0 0
% $\mathcal{N}^{A,m}$
\put(36.9500,-11.7000){\makebox(0,0)[rb]{$\mathcal{N}^{A,m}$}}%
% LINE 1 0 3 0 Black White
% 2 2958 1940 2958 1140
% 
\special{pn 13}%
\special{pa 2958 1940}%
\special{pa 2958 1140}%
\special{fp}%
% LINE 0 0 3 0 Black White
% 2 2560 1580 2560 540
% 
\special{pn 20}%
\special{pa 2560 1580}%
\special{pa 2560 540}%
\special{fp}%
% LINE 2 0 3 0 Black White
% 2 2958 1140 2558 1140
% 
\special{pn 8}%
\special{pa 2958 1140}%
\special{pa 2558 1140}%
\special{fp}%
% VECTOR 2 0 3 0 Black White
% 2 2750 1740 3150 1740
% 
\special{pn 8}%
\special{pa 2750 1740}%
\special{pa 3150 1740}%
\special{fp}%
\special{sh 1}%
\special{pa 3150 1740}%
\special{pa 3083 1720}%
\special{pa 3097 1740}%
\special{pa 3083 1760}%
\special{pa 3150 1740}%
\special{fp}%
% STR 2 0 3 0 Black White
% 4 2760 1610 2760 1710 5 0 0 0
% $1$
\put(27.6000,-17.1000){\makebox(0,0){$1$}}%
% STR 2 0 3 0 Black White
% 4 3160 1610 3160 1710 5 0 0 0
% $1$
\put(31.6000,-17.1000){\makebox(0,0){$1$}}%
% STR 2 0 3 0 Black White
% 4 2860 1790 2860 1890 5 0 0 0
% $\gamma$
\put(28.6000,-18.9000){\makebox(0,0){$\gamma$}}%
% LINE 3 0 3 0 Black White
% 2 2950 1540 3515 1340
% 
\special{pn 4}%
\special{pa 2950 1540}%
\special{pa 3515 1340}%
\special{fp}%
% STR 2 0 3 0 Black White
% 4 3595 1240 3595 1340 5 0 0 0
% $s$
\put(35.9500,-13.4000){\makebox(0,0){$s$}}%
% STR 2 0 3 0 Black White
% 4 3160 810 3160 910 5 0 0 0
% $-1$
\put(31.6000,-9.1000){\makebox(0,0){$-1$}}%
% LINE 2 0 3 0 Black White
% 2 2958 1580 2558 1580
% 
\special{pn 8}%
\special{pa 2958 1580}%
\special{pa 2558 1580}%
\special{fp}%
% SPLINE 2 0 3 0 Black White
% 4 2180 1220 2600 1020 3260 910 3260 910
% 
\special{pn 8}%
\special{pa 2180 1220}%
\special{pa 2208 1205}%
\special{pa 2237 1189}%
\special{pa 2265 1174}%
\special{pa 2294 1159}%
\special{pa 2322 1144}%
\special{pa 2351 1129}%
\special{pa 2379 1114}%
\special{pa 2408 1100}%
\special{pa 2437 1087}%
\special{pa 2466 1073}%
\special{pa 2495 1060}%
\special{pa 2525 1048}%
\special{pa 2554 1037}%
\special{pa 2614 1015}%
\special{pa 2644 1006}%
\special{pa 2675 997}%
\special{pa 2705 988}%
\special{pa 2798 967}%
\special{pa 2829 961}%
\special{pa 2861 955}%
\special{pa 2892 950}%
\special{pa 2924 945}%
\special{pa 3052 929}%
\special{pa 3116 923}%
\special{pa 3149 920}%
\special{pa 3213 914}%
\special{pa 3246 911}%
\special{pa 3260 910}%
\special{fp}%
% SARROW 2 0 3 1 Black White
% 2 2208 1205 2180 1220
% 
\special{pn 8}%
\special{pa 2208 1205}%
\special{pa 2180 1220}%
\special{fp}%
\special{sh 1}%
\special{pa 2180 1220}%
\special{pa 2248 1206}%
\special{pa 2227 1195}%
\special{pa 2229 1171}%
\special{pa 2180 1220}%
\special{fp}%
% STR 2 0 3 0 Black White
% 4 2760 960 2760 1060 5 0 0 0
% $1$
\put(27.6000,-10.6000){\makebox(0,0){$1$}}%
% STR 2 0 3 0 Black White
% 4 5135 1593 5135 1735 3 0 0 0
% ${\mathcal N}^{A,m-1}$
\put(51.3500,-17.3500){\makebox(0,0)[rb]{${\mathcal N}^{A,m-1}$}}%
% STR 2 0 3 0 Black White
% 4 6500 708 6500 850 3 0 0 0
% ${\mathcal N}^{A,m}$
\put(65.0000,-8.5000){\makebox(0,0)[rb]{${\mathcal N}^{A,m}$}}%
% LINE 1 0 3 0 Black White
% 2 5753 1955 5753 950
% 
\special{pn 13}%
\special{pa 5753 1955}%
\special{pa 5753 950}%
\special{fp}%
% LINE 1 0 3 0 Black White
% 2 5360 1050 5360 535
% 
\special{pn 13}%
\special{pa 5360 1050}%
\special{pa 5360 535}%
\special{fp}%
% LINE 2 0 3 0 Black White
% 2 5753 955 5353 955
% 
\special{pn 8}%
\special{pa 5753 955}%
\special{pa 5353 955}%
\special{fp}%
% VECTOR 2 0 3 0 Black White
% 2 5545 1785 5945 1785
% 
\special{pn 8}%
\special{pa 5545 1785}%
\special{pa 5945 1785}%
\special{fp}%
\special{sh 1}%
\special{pa 5945 1785}%
\special{pa 5878 1765}%
\special{pa 5892 1785}%
\special{pa 5878 1805}%
\special{pa 5945 1785}%
\special{fp}%
% STR 2 0 3 0 Black White
% 4 5555 1655 5555 1755 5 0 0 0
% $1$
\put(55.5500,-17.5500){\makebox(0,0){$1$}}%
% STR 2 0 3 0 Black White
% 4 5955 1655 5955 1755 5 0 0 0
% $1$
\put(59.5500,-17.5500){\makebox(0,0){$1$}}%
% STR 2 0 3 0 Black White
% 4 5655 1775 5655 1875 5 0 0 0
% $\gamma$
\put(56.5500,-18.7500){\makebox(0,0){$\gamma$}}%
% LINE 3 0 3 0 Black White
% 2 5760 1480 6295 1285
% 
\special{pn 4}%
\special{pa 5760 1480}%
\special{pa 6295 1285}%
\special{fp}%
% STR 2 0 3 0 Black White
% 4 6400 1180 6400 1280 5 0 0 0
% $s$
\put(64.0000,-12.8000){\makebox(0,0){$s$}}%
% LINE 2 0 3 0 Black White
% 2 5753 1055 5353 1055
% 
\special{pn 8}%
\special{pa 5753 1055}%
\special{pa 5353 1055}%
\special{fp}%
% LINE 2 0 3 0 Black White
% 2 5360 1010 4970 905
% 
\special{pn 8}%
\special{pa 5360 1010}%
\special{pa 4970 905}%
\special{fp}%
% STR 2 0 3 0 Black White
% 4 4980 685 4980 785 5 0 0 0
% $s'$
\put(49.8000,-7.8500){\makebox(0,0){$s'$}}%
% STR 2 0 3 0 Black White
% 4 5560 1050 5560 1150 5 0 0 0
% $1$
\put(55.6000,-11.5000){\makebox(0,0){$1$}}%
% STR 2 0 3 0 Black White
% 4 5960 1150 5960 1250 5 0 0 0
% $-1$
\put(59.6000,-12.5000){\makebox(0,0){$-1$}}%
% STR 2 0 3 0 Black White
% 4 5560 1300 5560 1400 5 0 0 0
% $-1$
\put(55.6000,-14.0000){\makebox(0,0){$-1$}}%
% STR 2 0 3 0 Black White
% 4 1750 1598 1750 1740 2 0 0 0
% ${\mathcal N}^{A,m-1}$
\put(17.5000,-17.4000){\makebox(0,0)[lb]{${\mathcal N}^{A,m-1}$}}%
% STR 2 0 3 0 Black White
% 4 5560 740 5560 840 5 0 0 0
% $1$
\put(55.6000,-8.4000){\makebox(0,0){$1$}}%
% SPLINE 2 0 3 0 Black White
% 5 5390 1420 5720 1370 5850 1290 6120 1230 6120 1230
% 
\special{pn 8}%
\special{pa 5390 1420}%
\special{pa 5457 1420}%
\special{pa 5490 1419}%
\special{pa 5523 1417}%
\special{pa 5555 1415}%
\special{pa 5587 1411}%
\special{pa 5618 1405}%
\special{pa 5649 1398}%
\special{pa 5679 1388}%
\special{pa 5708 1376}%
\special{pa 5736 1361}%
\special{pa 5763 1345}%
\special{pa 5817 1309}%
\special{pa 5844 1293}%
\special{pa 5873 1279}%
\special{pa 5902 1268}%
\special{pa 5933 1258}%
\special{pa 5964 1251}%
\special{pa 5995 1245}%
\special{pa 6027 1240}%
\special{pa 6060 1236}%
\special{pa 6092 1233}%
\special{pa 6120 1230}%
\special{fp}%
% SARROW 2 0 3 1 Black White
% 2 5423 1420 5390 1420
% 
\special{pn 8}%
\special{pa 5423 1420}%
\special{pa 5390 1420}%
\special{fp}%
\special{sh 1}%
\special{pa 5390 1420}%
\special{pa 5457 1440}%
\special{pa 5443 1420}%
\special{pa 5457 1400}%
\special{pa 5390 1420}%
\special{fp}%
% LINE 3 0 3 0 Black White
% 20 3045 560 3295 810 3045 620 3295 870 3045 680 3285 920 3045 740 3225 920 3045 800 3165 920 3045 860 3105 920 3075 530 3295 750 3135 530 3295 690 3195 530 3295 630 3255 530 3295 570
% 
\special{pn 4}%
\special{pa 3045 560}%
\special{pa 3295 810}%
\special{fp}%
\special{pa 3045 620}%
\special{pa 3295 870}%
\special{fp}%
\special{pa 3045 680}%
\special{pa 3285 920}%
\special{fp}%
\special{pa 3045 740}%
\special{pa 3225 920}%
\special{fp}%
\special{pa 3045 800}%
\special{pa 3165 920}%
\special{fp}%
\special{pa 3045 860}%
\special{pa 3105 920}%
\special{fp}%
\special{pa 3075 530}%
\special{pa 3295 750}%
\special{fp}%
\special{pa 3135 530}%
\special{pa 3295 690}%
\special{fp}%
\special{pa 3195 530}%
\special{pa 3295 630}%
\special{fp}%
\special{pa 3255 530}%
\special{pa 3295 570}%
\special{fp}%
% LINE 3 0 3 0 Black White
% 8 3165 920 3245 1000 3245 940 3295 990 3105 920 3185 1000 3055 930 3125 1000
% 
\special{pn 4}%
\special{pa 3165 920}%
\special{pa 3245 1000}%
\special{fp}%
\special{pa 3245 940}%
\special{pa 3295 990}%
\special{fp}%
\special{pa 3105 920}%
\special{pa 3185 1000}%
\special{fp}%
\special{pa 3055 930}%
\special{pa 3125 1000}%
\special{fp}%
% LINE 3 0 3 0 Black White
% 16 5840 970 6060 1190 5840 1030 6050 1240 5840 1090 5990 1240 5840 1150 5940 1250 5840 1210 5900 1270 5880 950 6070 1140 5940 950 6070 1080 6000 950 6070 1020
% 
\special{pn 4}%
\special{pa 5840 970}%
\special{pa 6060 1190}%
\special{fp}%
\special{pa 5840 1030}%
\special{pa 6050 1240}%
\special{fp}%
\special{pa 5840 1090}%
\special{pa 5990 1240}%
\special{fp}%
\special{pa 5840 1150}%
\special{pa 5940 1250}%
\special{fp}%
\special{pa 5840 1210}%
\special{pa 5900 1270}%
\special{fp}%
\special{pa 5880 950}%
\special{pa 6070 1140}%
\special{fp}%
\special{pa 5940 950}%
\special{pa 6070 1080}%
\special{fp}%
\special{pa 6000 950}%
\special{pa 6070 1020}%
\special{fp}%
% LINE 3 0 3 0 Black White
% 12 5860 1290 5970 1400 5840 1330 5930 1420 5840 1390 5880 1430 5900 1270 6010 1380 5950 1260 6060 1370 6000 1250 6070 1320
% 
\special{pn 4}%
\special{pa 5860 1290}%
\special{pa 5970 1400}%
\special{fp}%
\special{pa 5840 1330}%
\special{pa 5930 1420}%
\special{fp}%
\special{pa 5840 1390}%
\special{pa 5880 1430}%
\special{fp}%
\special{pa 5900 1270}%
\special{pa 6010 1380}%
\special{fp}%
\special{pa 5950 1260}%
\special{pa 6060 1370}%
\special{fp}%
\special{pa 6000 1250}%
\special{pa 6070 1320}%
\special{fp}%
% LINE 3 0 3 0 Black White
% 16 5840 1510 6000 1670 5850 1460 6060 1670 5890 1440 6070 1620 5930 1420 6070 1560 5970 1400 6070 1500 6020 1390 6070 1440 5840 1570 5940 1670 5840 1630 5880 1670
% 
\special{pn 4}%
\special{pa 5840 1510}%
\special{pa 6000 1670}%
\special{fp}%
\special{pa 5850 1460}%
\special{pa 6060 1670}%
\special{fp}%
\special{pa 5890 1440}%
\special{pa 6070 1620}%
\special{fp}%
\special{pa 5930 1420}%
\special{pa 6070 1560}%
\special{fp}%
\special{pa 5970 1400}%
\special{pa 6070 1500}%
\special{fp}%
\special{pa 6020 1390}%
\special{pa 6070 1440}%
\special{fp}%
\special{pa 5840 1570}%
\special{pa 5940 1670}%
\special{fp}%
\special{pa 5840 1630}%
\special{pa 5880 1670}%
\special{fp}%
% STR 2 0 3 0 Black White
% 4 5380 1380 5380 1480 3 0 0 0
% $\delta$
\put(53.8000,-14.8000){\makebox(0,0)[rb]{$\delta$}}%
\end{picture}}%
\end{center}
\caption{A non-zero curves passing two non-zero regions.}
\label{transverse}
\end{figure}
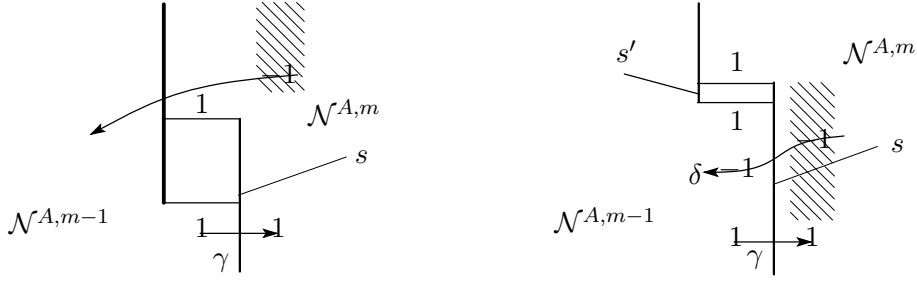

Thus, the $j$-coodinate of the next $-1$ of the right $1$ on $\gamma$ is lower than the bottom coefficient of $\mathcal{N}^{A,m}_{l}$.
We define the curve including the next $-1$ to be $\delta$ (see the second picture in Figure~\ref{transverse}).
We assume that the segment $\gamma$ meets at the highest point on $s$ 
among segments positively-passing from $\mathcal{N}^{A,m-1}$ to $\mathcal{N}^{A,m}$.
Such a segment $\delta$ passing from $\mathcal{N}^{A,m}$ to $\mathcal{N}^{A,m-1}$ whose $j$-coordinate is lower than the top coefficient 
of $\mathcal{N}^{A,m}_{l}$, is unique.
Removing the segment $\delta$ from the plane, we have the odd non-zero coefficinents in $\mathcal{N}^{A,m-1}_{l+1}$ with the $j$-coordinate upper than $\gamma$.
The curves on the remained coefficients are not passing between $\mathcal{N}^{A,m-1}$ and $\mathcal{N}^{A,m}$ from the previous condition.
On the other hand, such a curve must turn in the left side of $s$.
To turn curves we need the even coefficients because the curves are no end points.
This is a contradiction.
Therefore, this case does not occur.
As a result, there exists no such $\gamma$ which is passing from $\mathcal{N}^{A,m-1}$ to $\mathcal{N}^{A,m}$.\\
%({\bf Case 1-1}.) If the $j$-coordinate of the next non-zero $-1$-coefficient is lower than the lowest coefficient in $\mathcal{N}_{l+1}$.
%Since any non-zero curve has no end points from Lemma~\ref{noendunbound}, the non-zero curve on the $-1$-coefficient has to meet the segment $s$.
%We find a $-1$-coefficient in $\mathcal{N}'_{l+1}$ over the curve as in the left top picture in Figure~\ref{transverse}.
%The number of the non-zero coefficients higher than the curve in $\mathcal{N}_{l+1}'$ is an even number.
%However this does not satisfy the alternating condition in (\ref{noendunbound}).
%
%({\bf Case 1-2}.) If the $j$-coordinate of the next non-zero $-1$-coefficient is higher than the lowest coefficient in $\mathcal{N}_{l+1}$.
%Since any non-zero curve has no end points from Lemma~\ref{noendunbound}, the non-zero curve on the $-1$-coefficient has to meet the next segment $s'$.
%However the non-zero curve passing the lowest $1$-coefficient in $\mathcal{N}_{l+1}$ has an end point.
%This is inconsistent with Lemma~\ref{noendunbound}.
%
%Suppose that the curve $\gamma$ meets on the next segment $s'$.
%The left of the meeting point is $-1$.
%In the shaded region of the right in Figure~\ref{transverse} there is no non-zero coefficient.
%This contradicts the fact that the top coefficient of the Alexander polynomial is $1$.
({\bf Case 2.}) Suppose that a non-zero negative arrow $\gamma$ is passing from $\mathcal{N}^{A,m}$ to $\mathcal{N}^{A,m-1}$ in the negative direction.
Let $s$ be the same vertical segment as above, which $\gamma$ is passing $s$ as Figure~\ref{transverse2}.
We set the $i$-coordinate of the left of $\gamma$ to $s$ to be $l$.
The non-zero coefficients upper than $\gamma$ in $\mathcal{N}^{A,m}_l$ has odd terms.
Then there exists a curve $\gamma'$ passing from $\mathcal{N}^{A,m-1}$ to $\mathcal{N}^{A,m}$.
See Figure~\ref{transverse2}.
This reduces to Case~1.

Therefore, any non-zero curve is included in a non-zero region.
%Let $s$ be the vertical setment $({\mathcal N}\cap\mathcal{N}')\cap\{(i_0+\frac{1}{2},x)\in {\mathbb R}^2|x\in {\mathbb R}\}$.
%Let $\gamma$ be a segment in the curve connecting two 1s faced to the both side of $s$.
%We assume that $\gamma$ is such a segment with highest $j$-coordinate on the $s$. 
%Let the coordinate be $(i_0,j_0)$ and $(i_0+1,j_0)$.
%Then the number of lattice points $(i_0+1,j)$ with $j>j_0$ with $A_{i_0+1,j}\neq 0$ in $\mathcal{N}'$ is odd.
%Hence, there exists a non-zero curve crossing in the positive direction such that the curve intersects with an upper point than $\gamma$ as in Figure~\ref{transverse2}.
%Note that we need recall the highest condition.
%This situation is reduced to the Case 1.
%
%
%Suppose that $\gamma$ is passing from the right to the left as in Figure~\ref{transverse2}.
%We may assume the meeting point is the highest one among such meeting points.
%Then the next above non-zero coefficient is $1$, however, we cannot draw a curve passing the point.
%Because the curve passing the coefficient $1$ has to agree with $\gamma$, which is passing from the left to the right at a point in the segment.
%From what we proved above, such curve does not exist.
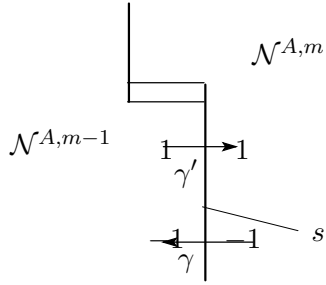
\begin{figure}[thpb]
\begin{center}
%WinTpicVersion4.30d
{\unitlength 0.1in%
\begin{picture}( 15.2500, 14.5000)( 17.9000,-24.3000)%
% STR 2 0 3 0 Black White
% 4 1790 1628 1790 1770 2 0 0 0
% $\mathcal{N}^{A,m-1}$
\put(17.9000,-17.7000){\makebox(0,0)[lb]{$\mathcal{N}^{A,m-1}$}}%
% STR 2 0 3 0 Black White
% 4 3435 1153 3435 1295 3 0 0 0
% $\mathcal{N}^{A,m}$
\put(34.3500,-12.9500){\makebox(0,0)[rb]{$\mathcal{N}^{A,m}$}}%
% LINE 1 0 3 0 Black White
% 2 2810 2430 2810 1405
% 
\special{pn 13}%
\special{pa 2810 2430}%
\special{pa 2810 1405}%
\special{fp}%
% LINE 1 0 3 0 Black White
% 2 2410 1495 2410 980
% 
\special{pn 13}%
\special{pa 2410 1495}%
\special{pa 2410 980}%
\special{fp}%
% LINE 2 0 3 0 Black White
% 2 2803 1395 2403 1395
% 
\special{pn 8}%
\special{pa 2803 1395}%
\special{pa 2403 1395}%
\special{fp}%
% STR 2 0 3 0 Black White
% 4 2605 2115 2605 2215 5 0 0 0
% $-1$
\put(26.0500,-22.1500){\makebox(0,0){$-1$}}%
% STR 2 0 3 0 Black White
% 4 3005 2115 3005 2215 5 0 0 0
% $-1$
\put(30.0500,-22.1500){\makebox(0,0){$-1$}}%
% STR 2 0 3 0 Black White
% 4 2710 2240 2710 2340 5 0 0 0
% $\gamma$
\put(27.1000,-23.4000){\makebox(0,0){$\gamma$}}%
% LINE 3 0 3 0 Black White
% 2 2795 2045 3295 2170
% 
\special{pn 4}%
\special{pa 2795 2045}%
\special{pa 3295 2170}%
\special{fp}%
% STR 2 0 3 0 Black White
% 4 3400 2090 3400 2190 5 0 0 0
% $s$
\put(34.0000,-21.9000){\makebox(0,0){$s$}}%
% LINE 2 0 3 0 Black White
% 2 2803 1495 2403 1495
% 
\special{pn 8}%
\special{pa 2803 1495}%
\special{pa 2403 1495}%
\special{fp}%
% VECTOR 2 0 3 0 Black White
% 2 2590 1730 2970 1730
% 
\special{pn 8}%
\special{pa 2590 1730}%
\special{pa 2970 1730}%
\special{fp}%
\special{sh 1}%
\special{pa 2970 1730}%
\special{pa 2903 1710}%
\special{pa 2917 1730}%
\special{pa 2903 1750}%
\special{pa 2970 1730}%
\special{fp}%
% STR 2 0 3 0 Black White
% 4 2605 1645 2605 1745 5 0 0 0
% $1$
\put(26.0500,-17.4500){\makebox(0,0){$1$}}%
% STR 2 0 3 0 Black White
% 4 3005 1645 3005 1745 5 0 0 0
% $1$
\put(30.0500,-17.4500){\makebox(0,0){$1$}}%
% VECTOR 2 0 3 0 Black White
% 2 3060 2230 2590 2230
% 
\special{pn 8}%
\special{pa 3060 2230}%
\special{pa 2590 2230}%
\special{fp}%
\special{sh 1}%
\special{pa 2590 2230}%
\special{pa 2657 2250}%
\special{pa 2643 2230}%
\special{pa 2657 2210}%
\special{pa 2590 2230}%
\special{fp}%
% STR 2 0 3 0 Black White
% 4 2710 1770 2710 1870 5 0 0 0
% $\gamma'$
\put(27.1000,-18.7000){\makebox(0,0){$\gamma'$}}%
\end{picture}}%
\end{center}
\caption{Case 2: There exists a non-zero curve passing $s$ with negative direction.
Then we can find a non-zero curve $\gamma'$ passing $s$ on the upper point of $s$ in the positive direction.}
\label{transverse2}
\end{figure}
\end{proof}
\begin{lem}
\label{bounded}
Let $K$ be a lens space knot in an $\lz$ or $K_{p,k}$ in $Y_{p,k}$.
If $\gamma$ is one component of non-zero curve, then $\gamma$ is unbounded both $i$- and $j$-coordinate. 
\end{lem}
The unbounded-ness about $i$- or $j$-coordinate means that the projection of 
$\gamma$ to $i$- or $j$-component respectively is surjective.
\begin{proof}
Let $\gamma$ be a non-zero curve in a non-zero region $\mathcal{N}^{A,m}$ or $\mathcal{N}^B$.
Since $\gamma$ is monotone about $j$-coordinate, it is unbounded in $j$-coordinate.
If $\gamma$ is bounded above by $i=i_0$, then $\gamma$ is bounded above by $j$-coordinate of 
the top coefficient of $\mathcal{N}^{A,m}_{i_0}$.
Therefore $\gamma$ is unbounded about $i$-coordinate.
%Let $j_0$ be the minimal integer in ${\mathcal N}\cap \{i=i_0\}$.
%The lattice points on $R:={\mathcal N}\cap\{i\le i_0+\frac{1}{2}\}\cap\{j\ge j_0-\frac{1}{2}\}$ are finite.
%Once the curve $\gamma$ enters in $R$, the curve does not go out from $R$.
%Because any non-zero curve is unbounded with respect to $j$-coordinate.
%This contradicts the right prevous lemma.
%Thus the curve $\gamma$ is unbounded above about $i$.

The proof for $K=K_{p,k}$ can be similarly proven by replacing $A$-matrix with $B$-matrix.
\end{proof}
As a corollary, $\mathcal{N}^{A,m}$ and $\mathcal{N}^B$ are connected, because if it is disconnected, then 
each component is bounded.
This means that any non-zero curve is bounded.\\
{\bf Theorem~\ref{non-zerocor}.}
{\it 
Let $K$ be a lens space knot in an $L{\mathbb Z}HS^3$ or $K_{p,k}$ for relatively positive integers $p,k$.
There is one non-zero curve only in each non-zero region.
}
\medskip
\begin{proof}
Suppose that two connected components $\gamma$ and $\delta$ of non-zero curves are contained in a non-zero region.
Since $\gamma$ and $\delta$ are disjoint each other, we may assume that one of three components in ${\Bbb R}^2-\gamma-\delta$ 
does not have any non-zero curve.
If $\gamma$ is upper than $\delta$, then we can find a not-alternating pair $1$ and $1$ in $\Delta_K(t)$ as seen in Figure~\ref{not-alternating}
\begin{figure}[htbp]
\begin{center}
%WinTpicVersion4.30d
{\unitlength 0.1in%
\begin{picture}( 28.0000, 20.2700)( 28.0000,-23.2700)%
% LINE 2 0 3 0 Black White
% 6 5200 400 5200 800 5400 800 5400 1600 5600 1600 5600 2200
% 
\special{pn 8}%
\special{pa 5200 400}%
\special{pa 5200 800}%
\special{fp}%
\special{pa 5400 800}%
\special{pa 5400 1600}%
\special{fp}%
\special{pa 5600 1600}%
\special{pa 5600 2200}%
\special{fp}%
% LINE 2 0 3 0 Black White
% 4 5400 800 5200 800 5400 1600 5600 1600
% 
\special{pn 8}%
\special{pa 5400 800}%
\special{pa 5200 800}%
\special{fp}%
\special{pa 5400 1600}%
\special{pa 5600 1600}%
\special{fp}%
% LINE 2 0 3 0 Black White
% 6 2800 400 2800 800 3000 800 3000 1600 3200 1600 3200 2200
% 
\special{pn 8}%
\special{pa 2800 400}%
\special{pa 2800 800}%
\special{fp}%
\special{pa 3000 800}%
\special{pa 3000 1600}%
\special{fp}%
\special{pa 3200 1600}%
\special{pa 3200 2200}%
\special{fp}%
% LINE 2 0 3 0 Black White
% 4 3000 800 2800 800 3000 1600 3200 1600
% 
\special{pn 8}%
\special{pa 3000 800}%
\special{pa 2800 800}%
\special{fp}%
\special{pa 3000 1600}%
\special{pa 3200 1600}%
\special{fp}%
% STR 2 0 3 0 Black White
% 4 3200 300 3200 400 3 0 0 0
% 
\put(32.0000,-4.0000){\makebox(0,0)[rb]{}}%
% LINE 2 0 3 0 Black White
% 2 5100 500 5100 500
% 
\special{pn 8}%
\special{pa 5100 500}%
\special{pa 5100 500}%
\special{fp}%
% LINE 2 0 3 0 Black White
% 2 4900 2100 4900 2300
% 
\special{pn 8}%
\special{pa 4900 2100}%
\special{pa 4900 2300}%
\special{fp}%
% LINE 2 0 3 0 Black White
% 2 5500 2100 4900 2100
% 
\special{pn 8}%
\special{pa 5500 2100}%
\special{pa 4900 2100}%
\special{fp}%
% LINE 2 0 3 0 Black White
% 2 5500 1900 5500 2100
% 
\special{pn 8}%
\special{pa 5500 1900}%
\special{pa 5500 2100}%
\special{fp}%
% LINE 2 0 3 0 Black White
% 2 5300 1900 5500 1900
% 
\special{pn 8}%
\special{pa 5300 1900}%
\special{pa 5500 1900}%
\special{fp}%
% LINE 2 0 3 0 Black White
% 2 5100 1700 5300 1900
% 
\special{pn 8}%
\special{pa 5100 1700}%
\special{pa 5300 1900}%
\special{fp}%
% LINE 2 0 3 0 Black White
% 2 4900 1700 4700 1500
% 
\special{pn 8}%
\special{pa 4900 1700}%
\special{pa 4700 1500}%
\special{fp}%
% LINE 2 0 3 0 Black White
% 2 4300 1300 4300 1500
% 
\special{pn 8}%
\special{pa 4300 1300}%
\special{pa 4300 1500}%
\special{fp}%
% LINE 2 0 3 0 Black White
% 2 5300 1300 4300 1300
% 
\special{pn 8}%
\special{pa 5300 1300}%
\special{pa 4300 1300}%
\special{fp}%
% LINE 2 0 3 0 Black White
% 2 5300 1100 5300 1300
% 
\special{pn 8}%
\special{pa 5300 1100}%
\special{pa 5300 1300}%
\special{fp}%
% LINE 2 0 3 0 Black White
% 2 5100 900 5300 1100
% 
\special{pn 8}%
\special{pa 5100 900}%
\special{pa 5300 1100}%
\special{fp}%
% LINE 2 0 3 0 Black White
% 2 4900 700 5100 900
% 
\special{pn 8}%
\special{pa 4900 700}%
\special{pa 5100 900}%
\special{fp}%
% LINE 2 0 3 0 Black White
% 2 4500 700 4900 700
% 
\special{pn 8}%
\special{pa 4500 700}%
\special{pa 4900 700}%
\special{fp}%
% LINE 2 0 3 0 Black White
% 2 4500 500 4500 700
% 
\special{pn 8}%
\special{pa 4500 500}%
\special{pa 4500 700}%
\special{fp}%
% VECTOR 2 0 3 0 Black White
% 2 5100 500 4500 500
% 
\special{pn 8}%
\special{pa 5100 500}%
\special{pa 4500 500}%
\special{fp}%
\special{sh 1}%
\special{pa 4500 500}%
\special{pa 4567 520}%
\special{pa 4553 500}%
\special{pa 4567 480}%
\special{pa 4500 500}%
\special{fp}%
% LINE 2 0 3 0 Black White
% 2 4200 2300 4600 2300
% 
\special{pn 8}%
\special{pa 4200 2300}%
\special{pa 4600 2300}%
\special{fp}%
% LINE 2 0 3 0 Black White
% 2 3400 700 3200 500
% 
\special{pn 8}%
\special{pa 3400 700}%
\special{pa 3200 500}%
\special{fp}%
% LINE 2 0 3 0 Black White
% 2 3400 1100 3400 700
% 
\special{pn 8}%
\special{pa 3400 1100}%
\special{pa 3400 700}%
\special{fp}%
% LINE 2 0 3 0 Black White
% 2 4000 1100 3400 1100
% 
\special{pn 8}%
\special{pa 4000 1100}%
\special{pa 3400 1100}%
\special{fp}%
% LINE 2 0 3 0 Black White
% 2 4000 1300 4000 1100
% 
\special{pn 8}%
\special{pa 4000 1300}%
\special{pa 4000 1100}%
\special{fp}%
% LINE 2 0 3 0 Black White
% 2 3800 1300 4000 1300
% 
\special{pn 8}%
\special{pa 3800 1300}%
\special{pa 4000 1300}%
\special{fp}%
% LINE 2 0 3 0 Black White
% 2 3800 1500 3800 1300
% 
\special{pn 8}%
\special{pa 3800 1500}%
\special{pa 3800 1300}%
\special{fp}%
% VECTOR 2 0 3 0 Black White
% 2 3000 500 3200 500
% 
\special{pn 8}%
\special{pa 3000 500}%
\special{pa 3200 500}%
\special{fp}%
\special{sh 1}%
\special{pa 3200 500}%
\special{pa 3133 480}%
\special{pa 3147 500}%
\special{pa 3133 520}%
\special{pa 3200 500}%
\special{fp}%
% VECTOR 2 0 3 0 Black White
% 2 4300 1500 4700 1500
% 
\special{pn 8}%
\special{pa 4300 1500}%
\special{pa 4700 1500}%
\special{fp}%
\special{sh 1}%
\special{pa 4700 1500}%
\special{pa 4633 1480}%
\special{pa 4647 1500}%
\special{pa 4633 1520}%
\special{pa 4700 1500}%
\special{fp}%
% VECTOR 2 0 3 0 Black White
% 2 4900 1700 5100 1700
% 
\special{pn 8}%
\special{pa 4900 1700}%
\special{pa 5100 1700}%
\special{fp}%
\special{sh 1}%
\special{pa 5100 1700}%
\special{pa 5033 1680}%
\special{pa 5047 1700}%
\special{pa 5033 1720}%
\special{pa 5100 1700}%
\special{fp}%
% VECTOR 2 0 3 0 Black White
% 2 4900 2300 5300 2300
% 
\special{pn 8}%
\special{pa 4900 2300}%
\special{pa 5300 2300}%
\special{fp}%
\special{sh 1}%
\special{pa 5300 2300}%
\special{pa 5233 2280}%
\special{pa 5247 2300}%
\special{pa 5233 2320}%
\special{pa 5300 2300}%
\special{fp}%
% VECTOR 2 0 3 0 Black White
% 2 3800 1500 4400 2100
% 
\special{pn 8}%
\special{pa 3800 1500}%
\special{pa 4400 2100}%
\special{fp}%
\special{sh 1}%
\special{pa 4400 2100}%
\special{pa 4367 2039}%
\special{pa 4362 2062}%
\special{pa 4339 2067}%
\special{pa 4400 2100}%
\special{fp}%
% VECTOR 2 0 3 0 Black White
% 2 4400 2100 4600 2100
% 
\special{pn 8}%
\special{pa 4400 2100}%
\special{pa 4600 2100}%
\special{fp}%
\special{sh 1}%
\special{pa 4600 2100}%
\special{pa 4533 2080}%
\special{pa 4547 2100}%
\special{pa 4533 2120}%
\special{pa 4600 2100}%
\special{fp}%
% LINE 2 0 3 0 Black White
% 2 4600 2290 4600 2090
% 
\special{pn 8}%
\special{pa 4600 2290}%
\special{pa 4600 2090}%
\special{fp}%
% LINE 2 2 3 0 Black White
% 2 5600 2300 5600 2200
% 
\special{pn 8}%
\special{pa 5600 2300}%
\special{pa 5600 2200}%
\special{dt 0.045}%
% LINE 2 2 3 0 Black White
% 2 3200 2300 3200 2200
% 
\special{pn 8}%
\special{pa 3200 2300}%
\special{pa 3200 2200}%
\special{dt 0.045}%
% LINE 2 2 3 0 Black White
% 2 2800 400 2800 300
% 
\special{pn 8}%
\special{pa 2800 400}%
\special{pa 2800 300}%
\special{dt 0.045}%
% LINE 2 2 3 0 Black White
% 2 5200 400 5200 300
% 
\special{pn 8}%
\special{pa 5200 400}%
\special{pa 5200 300}%
\special{dt 0.045}%
% BOX 2 2 3 0 Black White
% 2 4560 1460 4460 2200
% 
\special{pn 8}%
\special{pa 4560 1460}%
\special{pa 4460 1460}%
\special{pa 4460 2200}%
\special{pa 4560 2200}%
\special{pa 4560 1460}%
\special{dt 0.045}%
% STR 2 0 3 0 Black White
% 4 4420 350 4420 450 3 0 0 0
% $\gamma$
\put(44.2000,-4.5000){\makebox(0,0)[rb]{$\gamma$}}%
% STR 2 0 3 0 Black White
% 4 3800 940 3800 1040 3 0 0 0
% $\delta$
\put(38.0000,-10.4000){\makebox(0,0)[rb]{$\delta$}}%
\end{picture}}%
\caption{Two non-zero coefficients which are not allowed in the broken box.}
\label{not-alternating}
\end{center}
\end{figure}
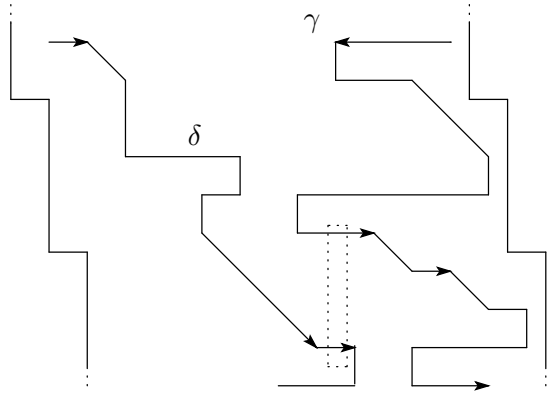
If one cannot find not-alternating coefficients, then $\gamma$ and $\delta$ must be separated by a vertical line $i=x_0$
for a real number $x_0$.
However, from Lemma~\ref{bounded} any non-zero curve is unbounded about the $i$-coordinate.
\end{proof}
We prove Corollary~\ref{thenumber}.
\begin{proof}
If on the non-zero curve one starts with a fixed non-zero lattice point ${\bf p}_0$ and goes to
the point ${\bf p}_0+(1,-k_2)$, then the curve tracks all the non-zero coefficients of the Alexander polynomial per one time.
The number is at least $k_2$ by considering the shifting length of non-zero region about the $j$-coordinate.
\end{proof}
%Let $(p,k)$ be a lens surgery parameter.
%If the parameter is an admissible lens surgery, we fix an Alexander region ${\mathcal A}$.
%Then we define an integer $m_i(i)$ for $i\in {\Bbb Z}$ as follows:
%$$m_1(i)=
%\begin{cases}
%\max\{j|a_{-j-k_2(i+c)}\neq 0,(i,j)\in {\mathcal A}\}&\text{admissible lens space knot}\\
%\max\{j|c_{j-ik}\neq 0\}&\text{double-primitive knot.}
%\end{cases}$$
%$$m_2(i)=
%\begin{cases}
%\min\{j|a_{-j-k_2(i+c)}\neq 0,(i,j)\in {\mathcal A}\}&\text{admissible lens space knot}\\
%\min\{j|c_{j-ik}\neq 0\}&\text{double-primitive knot.}
%\end{cases}$$
%\begin{defn}[Non-zero region]
%Let $(p,k)$ be the surgery parameter of an admissible lens space knot or a double-primitive knot.
%We define ${\mathcal B}$ to be the union of $\frac{1}{2}$-neighborhoods of $\{(i,j)\in{\Bbb Z}^2|m_1(i)\le j\le m_2(i)\}$
%\end{defn}
%Thus we have the following:
%\begin{lem}
%Any non-zero curve is contained in a non-zero region.
%Furthermore, in any non-zero region has a single component of non-zero curve is contained.
%\end{lem}
%\begin{proof}
%The proof of the case of double-primitive knot is remained.
%Any non-zero curve of double-primitive knot is the curve passing lattice points for any non-zero coefficient.
%Therefore, any non-zero curve is contained in the non-zero region.
%\end{proof}
\section{Applications by non-zero curve.}%%%%%%%%%%%%%%%%%%%%
\subsection{The lens surgeries with $2g(K)-1\le k_2$.}
\label{2d-1k2class}
By applying non-zero region and non-zero curve to lens surgery polynomial, 
we classify lens space knots in an $\lz$ with the parameter $(p,k,k_2)$ and $2g(K)-1\le k_2$.
\begin{prop}
\label{upk2bound}
Let $K$ be a lens space knot in an $\lz$ with parameter $(p,k,k_2)$ or $K_{p,k}$ in $Y_{p,k}$ and the genus $g$.
Then an inequality $k_2\le 2g+1$ holds.
\end{prop}
\begin{proof}
%We consider the former case of $K$.
Let $\mathcal{N}^{X}=\cup_{l\in {\Bbb Z}}{\mathcal N}_l^{X}$ be a non-zero region, where $X=(A,m)$ or $B$.
If $k_2\ge 2g+2$, then $\mathcal{N}^X$ is disconnected,
since the parallel transform is $(1,-k_2)$.
Thus, this is contradiction.
\end{proof}

\begin{theorem}
\label{k22g1}
Let $K$ be a lens space knot in an $\lz$ with parameter $(p,k,k_2)$ and $p>2$ and genus $g$.
If $2g\le k_2\le 2g+1$, then $e=-1$, $k=2$ and $\Delta_K(t)=\Delta_{T(2,2g+1)}$.
The parameters are $(p,k,k_2)=(4g+3,2,2g+1)$, $(4g+1,2,2g)$ respectively.
There exist no lens surgeries with $k_2=2g-1$.
\end{theorem}
\begin{proof}
Let $K$ be a lens space knot in an $\lz$ with $g=g(K)$ and $p>2$.
If $k_2=2g+1$, then by the parallel translation by $(1,-k_2)=(1,-(2g+1))$ two adjacent
$\mathcal{N}_l^{A,m}$ and $\mathcal{N}_{l+1}^{A,m}$ meet one corner point (Figure~\ref{meetonepoint}).
Hence, all the lattice points in the non-zero region ${\mathcal N}^{A,m}$ for any integer $m$ give non-zero coefficients.
Thus the Alexander polynomial is as follows:
$$\Delta_K(t)=t^g-t^{g-1}+t^{g-2}-\cdots+t^{-g}=\Delta_{T(2,2g+1)}(t).$$

By the definition of $k_2$, we have $2k_2\le p-1$.
If $2k_2\le p-2$, then there exists some integer $x$ such that $dA(x)=dA(x+ek)=0$.
From (\ref{eq2}) in Lemma~\ref{alternating}, the remainder of $q_2=k_2^2$ in $\{0,1,\cdots, p-1\}$ is 
smaller than $p-k_2$.
The $dA$-matrix $dA_{l,n}$ is described as follows by using (\ref{eq2}) in Lemma~\ref{alternating}
$$\begin{cases}1&[q_2l+enk_2]_{p}\in I_{-k_2}\\-1&[q_2l+enk_2]_{p}\in I_{k_2}\\0&\text{otherwise}\end{cases}$$
Thus non-zero values of the sequence $\{dA_{l,n}\}_{n\in {\mathbb Z}}$ are adjacent mutually as follows:
$$\cdots, 0,1,-1,1,-1,1,-1,\cdots, 1,-1,0,\cdots$$
In particular, $p<3k_2$ holds.
Thus, the zero values in the sequence $\{dA_{l,n}\}_{n\in {\mathbb Z}}$ are isolated, i.e., if $dA_{i,j}=0$ then $dA_{i,j\pm1}\neq 0$ holds.
Therefore, the $x$ with $dA(x)=0$ in the period $p$ is unique.
Thus $2(2g+1)+1=p$ holds.
Thus $k=2$ and $e=-1$ hold.
\begin{figure}[htbp]
\begin{center}
%WinTpicVersion4.30d
{\unitlength 0.1in%
\begin{picture}( 27.2000, 23.8500)( 15.0000,-24.5500)%
% BOX 2 0 3 0 Black White
% 2 4020 800 3820 1400
% 
\special{pn 8}%
\special{pa 4020 800}%
\special{pa 3820 800}%
\special{pa 3820 1400}%
\special{pa 4020 1400}%
\special{pa 4020 800}%
\special{pa 3820 800}%
\special{fp}%
% BOX 2 0 3 0 Black White
% 2 4220 1400 4020 2000
% 
\special{pn 8}%
\special{pa 4220 1400}%
\special{pa 4020 1400}%
\special{pa 4020 2000}%
\special{pa 4220 2000}%
\special{pa 4220 1400}%
\special{pa 4020 1400}%
\special{fp}%
% BOX 2 0 3 0 Black White
% 2 3820 200 3620 800
% 
\special{pn 8}%
\special{pa 3820 200}%
\special{pa 3620 200}%
\special{pa 3620 800}%
\special{pa 3820 800}%
\special{pa 3820 200}%
\special{pa 3620 200}%
\special{fp}%
% STR 2 0 3 0 Black White
% 4 3720 1380 3720 1480 5 0 0 0
% 0
\put(37.2000,-14.8000){\makebox(0,0){0}}%
% STR 2 0 3 0 Black White
% 4 3920 1380 3920 1480 5 0 0 0
% -1
\put(39.2000,-14.8000){\makebox(0,0){-1}}%
% STR 2 0 3 0 Black White
% 4 3900 2420 3900 2520 5 0 0 0
% $dA_{i,j}$
\put(39.0000,-25.2000){\makebox(0,0){$dA_{i,j}$}}%
% BOX 2 0 3 0 Black White
% 2 2000 800 1800 1400
% 
\special{pn 8}%
\special{pa 2000 800}%
\special{pa 1800 800}%
\special{pa 1800 1400}%
\special{pa 2000 1400}%
\special{pa 2000 800}%
\special{pa 1800 800}%
\special{fp}%
% BOX 2 0 3 0 Black White
% 2 2200 1400 2000 2000
% 
\special{pn 8}%
\special{pa 2200 1400}%
\special{pa 2000 1400}%
\special{pa 2000 2000}%
\special{pa 2200 2000}%
\special{pa 2200 1400}%
\special{pa 2000 1400}%
\special{fp}%
% BOX 2 0 3 0 Black White
% 2 1800 200 1600 800
% 
\special{pn 8}%
\special{pa 1800 200}%
\special{pa 1600 200}%
\special{pa 1600 800}%
\special{pa 1800 800}%
\special{pa 1800 200}%
\special{pa 1600 200}%
\special{fp}%
% LINE 2 0 3 0 Black White
% 2 1720 320 1500 114
% 
\special{pn 8}%
\special{pa 1720 320}%
\special{pa 1500 114}%
\special{fp}%
% LINE 2 0 3 0 Black White
% 2 2100 1510 1900 1310
% 
\special{pn 8}%
\special{pa 2100 1510}%
\special{pa 1900 1310}%
\special{fp}%
% ELLIPSE 2 0 3 0 Black White
% 4 2100 1800 2040 1910 2100 1510 2100 1910
% 
\special{pn 8}%
\special{ar 2100 1800 60 110  1.5707963  4.7123890}%
% ELLIPSE 2 0 3 0 Black White
% 4 2090 1600 2020 1692 2090 1692 2090 1356
% 
\special{pn 8}%
\special{ar 2090 1600 70 92  4.7123890  1.5707963}%
% ELLIPSE 2 0 3 0 Black White
% 4 1900 1200 1840 1310 1900 910 1900 1310
% 
\special{pn 8}%
\special{ar 1900 1200 60 110  1.5707963  4.7123890}%
% ELLIPSE 2 0 3 0 Black White
% 4 1700 600 1640 710 1700 310 1700 710
% 
\special{pn 8}%
\special{ar 1700 600 60 110  1.5707963  4.7123890}%
% ELLIPSE 2 0 3 0 Black White
% 4 1890 1000 1820 1092 1890 1092 1890 756
% 
\special{pn 8}%
\special{ar 1890 1000 70 92  4.7123890  1.5707963}%
% ELLIPSE 2 0 3 0 Black White
% 4 1690 400 1620 492 1690 492 1690 156
% 
\special{pn 8}%
\special{ar 1690 400 70 92  4.7123890  1.5707963}%
% VECTOR 2 0 3 0 Black White
% 2 2100 1910 2300 2110
% 
\special{pn 8}%
\special{pa 2100 1910}%
\special{pa 2300 2110}%
\special{fp}%
\special{sh 1}%
\special{pa 2300 2110}%
\special{pa 2267 2049}%
\special{pa 2262 2072}%
\special{pa 2239 2077}%
\special{pa 2300 2110}%
\special{fp}%
% LINE 2 0 3 0 Black White
% 2 2200 800 2000 1000
% 
\special{pn 8}%
\special{pa 2200 800}%
\special{pa 2000 1000}%
\special{fp}%
% STR 2 0 3 0 Black White
% 4 2400 1230 2400 1330 2 0 0 0
% $\mathcal{N}_{l+1}^{A,m}$
\put(24.0000,-13.3000){\makebox(0,0)[lb]{$\mathcal{N}_{l+1}^{A,m}$}}%
% LINE 2 0 3 0 Black White
% 2 2400 1330 2200 1530
% 
\special{pn 8}%
\special{pa 2400 1330}%
\special{pa 2200 1530}%
\special{fp}%
% STR 2 0 3 0 Black White
% 4 2000 100 2000 200 2 0 0 0
% $\mathcal{N}_{l-1}^{A,m}$
\put(20.0000,-2.0000){\makebox(0,0)[lb]{$\mathcal{N}_{l-1}^{A,m}$}}%
% LINE 2 0 3 0 Black White
% 2 2000 200 1800 400
% 
\special{pn 8}%
\special{pa 2000 200}%
\special{pa 1800 400}%
\special{fp}%
% STR 2 0 3 0 Black White
% 4 2200 700 2200 800 2 0 0 0
% $\mathcal{N}_l^{A,m}$
\put(22.0000,-8.0000){\makebox(0,0)[lb]{$\mathcal{N}_l^{A,m}$}}%
% LINE 2 0 3 0 Black White
% 2 1900 910 1700 710
% 
\special{pn 8}%
\special{pa 1900 910}%
\special{pa 1700 710}%
\special{fp}%
% STR 2 0 3 0 Black White
% 4 3720 170 3720 270 5 0 0 0
% 1
\put(37.2000,-2.7000){\makebox(0,0){1}}%
% STR 2 0 3 0 Black White
% 4 4120 1370 4120 1470 5 0 0 0
% 1
\put(41.2000,-14.7000){\makebox(0,0){1}}%
% STR 2 0 3 0 Black White
% 4 3520 170 3520 270 5 0 0 0
% -1
\put(35.2000,-2.7000){\makebox(0,0){-1}}%
% BOX 2 0 3 0 Black White
% 2 3820 1600 3620 2200
% 
\special{pn 8}%
\special{pa 3820 1600}%
\special{pa 3620 1600}%
\special{pa 3620 2200}%
\special{pa 3820 2200}%
\special{pa 3820 1600}%
\special{pa 3620 1600}%
\special{fp}%
% STR 2 0 3 0 Black White
% 4 3720 2000 3720 2100 5 0 0 0
% 1
\put(37.2000,-21.0000){\makebox(0,0){1}}%
% STR 2 0 3 0 Black White
% 4 3720 370 3720 470 5 0 0 0
% -1
\put(37.2000,-4.7000){\makebox(0,0){-1}}%
% STR 2 0 3 0 Black White
% 4 3520 570 3520 670 5 0 0 0
% -1
\put(35.2000,-6.7000){\makebox(0,0){-1}}%
% STR 2 0 3 0 Black White
% 4 3720 570 3720 670 5 0 0 0
% 1
\put(37.2000,-6.7000){\makebox(0,0){1}}%
% STR 2 0 3 0 Black White
% 4 3720 770 3720 870 5 0 0 0
% -1
\put(37.2000,-8.7000){\makebox(0,0){-1}}%
% STR 2 0 3 0 Black White
% 4 3720 1170 3720 1270 5 0 0 0
% -1
\put(37.2000,-12.7000){\makebox(0,0){-1}}%
% STR 2 0 3 0 Black White
% 4 3720 970 3720 1070 5 0 0 0
% 1
\put(37.2000,-10.7000){\makebox(0,0){1}}%
% STR 2 0 3 0 Black White
% 4 3920 770 3920 870 5 0 0 0
% 1
\put(39.2000,-8.7000){\makebox(0,0){1}}%
% STR 2 0 3 0 Black White
% 4 3920 1170 3920 1270 5 0 0 0
% 1
\put(39.2000,-12.7000){\makebox(0,0){1}}%
% STR 2 0 3 0 Black White
% 4 3920 970 3920 1070 5 0 0 0
% -1
\put(39.2000,-10.7000){\makebox(0,0){-1}}%
% STR 2 0 3 0 Black White
% 4 3520 370 3520 470 5 0 0 0
% 1
\put(35.2000,-4.7000){\makebox(0,0){1}}%
% STR 2 0 3 0 Black White
% 4 3920 1570 3920 1670 5 0 0 0
% 1
\put(39.2000,-16.7000){\makebox(0,0){1}}%
% STR 2 0 3 0 Black White
% 4 4120 1770 4120 1870 5 0 0 0
% 1
\put(41.2000,-18.7000){\makebox(0,0){1}}%
% STR 2 0 3 0 Black White
% 4 3720 1570 3720 1670 5 0 0 0
% 1
\put(37.2000,-16.7000){\makebox(0,0){1}}%
% STR 2 0 3 0 Black White
% 4 3720 1780 3720 1880 5 0 0 0
% -1
\put(37.2000,-18.8000){\makebox(0,0){-1}}%
% STR 2 0 3 0 Black White
% 4 4120 1580 4120 1680 5 0 0 0
% -1
\put(41.2000,-16.8000){\makebox(0,0){-1}}%
% STR 2 0 3 0 Black White
% 4 3920 1780 3920 1880 5 0 0 0
% -1
\put(39.2000,-18.8000){\makebox(0,0){-1}}%
% STR 2 0 3 0 Black White
% 4 3520 1580 3520 1680 5 0 0 0
% -1
\put(35.2000,-16.8000){\makebox(0,0){-1}}%
% STR 2 0 3 0 Black White
% 4 3520 1980 3520 2080 5 0 0 0
% -1
\put(35.2000,-20.8000){\makebox(0,0){-1}}%
% STR 2 0 3 0 Black White
% 4 3520 1770 3520 1870 5 0 0 0
% 1
\put(35.2000,-18.7000){\makebox(0,0){1}}%
% BOX 2 0 3 0 Black White
% 2 3620 1000 3420 1600
% 
\special{pn 8}%
\special{pa 3620 1000}%
\special{pa 3420 1000}%
\special{pa 3420 1600}%
\special{pa 3620 1600}%
\special{pa 3620 1000}%
\special{pa 3420 1000}%
\special{fp}%
% STR 2 0 3 0 Black White
% 4 3520 1370 3520 1470 5 0 0 0
% 1
\put(35.2000,-14.7000){\makebox(0,0){1}}%
% STR 2 0 3 0 Black White
% 4 3520 970 3520 1070 5 0 0 0
% 1
\put(35.2000,-10.7000){\makebox(0,0){1}}%
% STR 2 0 3 0 Black White
% 4 3520 1170 3520 1270 5 0 0 0
% -1
\put(35.2000,-12.7000){\makebox(0,0){-1}}%
\end{picture}}%
\caption{The case of $k_2=2g+1$.}
\label{meetonepoint}
\end{center}
\end{figure}

If $k_2=2g$, then by the translation $(1,-2g)$, the adjacent non-zero regions $\mathcal{N}_l^{A,m}$ and $\mathcal{N}_{l+1}^{A,m}$ are
attached at length one segment as in Figure~\ref{torus2}.
Then we have:
$$\Delta_K(t)=t^g-t^{g-1}+t^{g-2}-\cdots+t^{-g}=\Delta_{T(2,2g+1)}(t).$$
In the same reason as the case of $k_2=2g+1$, $p<3k_2$ and zero values in the sequence $\{dA_{l,n}\}_{n\in {\mathbb Z}}$ are isolated.
Thus $2\cdot 2g+1=p$ holds.
This means $k=2$ and $e=-1$.
\begin{figure}[htbp]
\begin{center}
%WinTpicVersion4.30d
{\unitlength 0.1in%
\begin{picture}( 30.0000, 21.3500)( 12.0000,-23.3500)%
% STR 2 0 3 0 Black White
% 4 2600 1100 2600 1200 5 0 0 0
% $\Rightarrow$
\put(26.0000,-12.0000){\makebox(0,0){$\Rightarrow$}}%
% LINE 2 0 3 0 Black White
% 2 3800 200 3800 800
% 
\special{pn 8}%
\special{pa 3800 200}%
\special{pa 3800 800}%
\special{fp}%
% LINE 2 0 3 0 Black White
% 2 3600 800 3600 200
% 
\special{pn 8}%
\special{pa 3600 800}%
\special{pa 3600 200}%
\special{fp}%
% LINE 2 0 3 0 Black White
% 2 3800 800 3600 800
% 
\special{pn 8}%
\special{pa 3800 800}%
\special{pa 3600 800}%
\special{fp}%
% BOX 2 0 3 0 Black White
% 2 4000 600 3800 1600
% 
\special{pn 8}%
\special{pa 4000 600}%
\special{pa 3800 600}%
\special{pa 3800 1600}%
\special{pa 4000 1600}%
\special{pa 4000 600}%
\special{pa 3800 600}%
\special{fp}%
% LINE 2 0 3 0 Black White
% 2 4200 1400 4200 2000
% 
\special{pn 8}%
\special{pa 4200 1400}%
\special{pa 4200 2000}%
\special{fp}%
% LINE 2 0 3 0 Black White
% 2 4000 1400 4200 1400
% 
\special{pn 8}%
\special{pa 4000 1400}%
\special{pa 4200 1400}%
\special{fp}%
% LINE 2 0 3 0 Black White
% 2 4000 2000 4000 1400
% 
\special{pn 8}%
\special{pa 4000 2000}%
\special{pa 4000 1400}%
\special{fp}%
% LINE 2 0 3 0 Black White
% 2 3800 1600 3800 2200
% 
\special{pn 8}%
\special{pa 3800 1600}%
\special{pa 3800 2200}%
\special{fp}%
% LINE 2 0 3 0 Black White
% 2 3600 1600 3800 1600
% 
\special{pn 8}%
\special{pa 3600 1600}%
\special{pa 3800 1600}%
\special{fp}%
% LINE 2 0 3 0 Black White
% 2 3600 2200 3600 1600
% 
\special{pn 8}%
\special{pa 3600 2200}%
\special{pa 3600 1600}%
\special{fp}%
% STR 2 0 3 0 Black White
% 4 3700 1420 3700 1520 5 0 0 0
% 1
\put(37.0000,-15.2000){\makebox(0,0){1}}%
% STR 2 0 3 0 Black White
% 4 4100 620 4100 720 5 0 0 0
% -1
\put(41.0000,-7.2000){\makebox(0,0){-1}}%
% LINE 2 0 3 0 Black White
% 2 1400 200 1400 800
% 
\special{pn 8}%
\special{pa 1400 200}%
\special{pa 1400 800}%
\special{fp}%
% LINE 2 0 3 0 Black White
% 2 1200 800 1200 200
% 
\special{pn 8}%
\special{pa 1200 800}%
\special{pa 1200 200}%
\special{fp}%
% LINE 2 0 3 0 Black White
% 2 1400 800 1200 800
% 
\special{pn 8}%
\special{pa 1400 800}%
\special{pa 1200 800}%
\special{fp}%
% BOX 2 0 3 0 Black White
% 2 1600 600 1400 1600
% 
\special{pn 8}%
\special{pa 1600 600}%
\special{pa 1400 600}%
\special{pa 1400 1600}%
\special{pa 1600 1600}%
\special{pa 1600 600}%
\special{pa 1400 600}%
\special{fp}%
% LINE 2 0 3 0 Black White
% 2 1800 1400 1800 2000
% 
\special{pn 8}%
\special{pa 1800 1400}%
\special{pa 1800 2000}%
\special{fp}%
% LINE 2 0 3 0 Black White
% 2 1600 1400 1800 1400
% 
\special{pn 8}%
\special{pa 1600 1400}%
\special{pa 1800 1400}%
\special{fp}%
% LINE 2 0 3 0 Black White
% 2 1600 2000 1600 1400
% 
\special{pn 8}%
\special{pa 1600 2000}%
\special{pa 1600 1400}%
\special{fp}%
% VECTOR 2 0 3 0 Black White
% 2 1260 720 1540 720
% 
\special{pn 8}%
\special{pa 1260 720}%
\special{pa 1540 720}%
\special{fp}%
\special{sh 1}%
\special{pa 1540 720}%
\special{pa 1473 700}%
\special{pa 1487 720}%
\special{pa 1473 740}%
\special{pa 1540 720}%
\special{fp}%
% VECTOR 2 0 3 0 Black White
% 2 1460 1520 1735 1520
% 
\special{pn 8}%
\special{pa 1460 1520}%
\special{pa 1735 1520}%
\special{fp}%
\special{sh 1}%
\special{pa 1735 1520}%
\special{pa 1668 1500}%
\special{pa 1682 1520}%
\special{pa 1668 1540}%
\special{pa 1735 1520}%
\special{fp}%
% ELLIPSE 2 0 3 0 Black White
% 4 1540 1220 1490 1320 1540 1520 1540 920
% 
\special{pn 8}%
\special{ar 1540 1220 50 100  4.7123890  1.5707963}%
% ELLIPSE 2 0 3 0 Black White
% 4 1540 820 1490 920 1540 1120 1540 520
% 
\special{pn 8}%
\special{ar 1540 820 50 100  4.7123890  1.5707963}%
% LINE 2 0 3 0 Black White
% 2 1560 920 1440 920
% 
\special{pn 8}%
\special{pa 1560 920}%
\special{pa 1440 920}%
\special{fp}%
% LINE 2 0 3 0 Black White
% 2 1560 1120 1440 1120
% 
\special{pn 8}%
\special{pa 1560 1120}%
\special{pa 1440 1120}%
\special{fp}%
% LINE 2 0 3 0 Black White
% 2 1560 1320 1440 1320
% 
\special{pn 8}%
\special{pa 1560 1320}%
\special{pa 1440 1320}%
\special{fp}%
% ELLIPSE 2 0 3 0 Black White
% 4 1460 1020 1510 1120 1460 720 1460 1320
% 
\special{pn 8}%
\special{ar 1460 1020 50 100  1.5707963  4.7123890}%
% ELLIPSE 2 0 3 0 Black White
% 4 1460 1410 1510 1510 1460 1110 1460 1710
% 
\special{pn 8}%
\special{ar 1460 1410 50 100  1.5707963  4.7123890}%
% ELLIPSE 2 2 3 0 Black White
% 4 1260 620 1310 720 1260 320 1260 920
% 
\special{pn 8}%
\special{pn 8}%
\special{pa 1260 720}%
\special{pa 1253 719}%
\special{fp}%
\special{pa 1230 701}%
\special{pa 1227 695}%
\special{fp}%
\special{pa 1215 664}%
\special{pa 1214 658}%
\special{fp}%
\special{pa 1210 622}%
\special{pa 1210 614}%
\special{fp}%
\special{pa 1214 581}%
\special{pa 1216 574}%
\special{fp}%
\special{pa 1227 544}%
\special{pa 1231 539}%
\special{fp}%
\special{pa 1253 521}%
\special{pa 1260 520}%
\special{fp}%
% ELLIPSE 2 2 3 0 Black White
% 4 1740 1620 1690 1720 1740 1920 1740 1320
% 
\special{pn 8}%
\special{pn 8}%
\special{pa 1740 1520}%
\special{pa 1747 1521}%
\special{fp}%
\special{pa 1768 1538}%
\special{pa 1772 1542}%
\special{fp}%
\special{pa 1784 1574}%
\special{pa 1786 1581}%
\special{fp}%
\special{pa 1790 1614}%
\special{pa 1790 1622}%
\special{fp}%
\special{pa 1786 1657}%
\special{pa 1785 1664}%
\special{fp}%
\special{pa 1772 1696}%
\special{pa 1769 1702}%
\special{fp}%
\special{pa 1747 1719}%
\special{pa 1740 1720}%
\special{fp}%
% STR 2 0 3 0 Black White
% 4 3900 1420 3900 1520 5 0 0 0
% 1
\put(39.0000,-15.2000){\makebox(0,0){1}}%
% STR 2 0 3 0 Black White
% 4 3900 1220 3900 1320 5 0 0 0
% -1
\put(39.0000,-13.2000){\makebox(0,0){-1}}%
% STR 2 0 3 0 Black White
% 4 3900 1020 3900 1120 5 0 0 0
% 1
\put(39.0000,-11.2000){\makebox(0,0){1}}%
% STR 2 0 3 0 Black White
% 4 3900 820 3900 920 5 0 0 0
% -1
\put(39.0000,-9.2000){\makebox(0,0){-1}}%
% STR 2 0 3 0 Black White
% 4 3900 620 3900 720 5 0 0 0
% 0
\put(39.0000,-7.2000){\makebox(0,0){0}}%
% STR 2 0 3 0 Black White
% 4 3700 420 3700 520 5 0 0 0
% -1
\put(37.0000,-5.2000){\makebox(0,0){-1}}%
% STR 2 0 3 0 Black White
% 4 4100 1420 4100 1520 5 0 0 0
% 1
\put(41.0000,-15.2000){\makebox(0,0){1}}%
% STR 2 0 3 0 Black White
% 4 4100 1620 4100 1720 5 0 0 0
% -1
\put(41.0000,-17.2000){\makebox(0,0){-1}}%
% STR 2 0 3 0 Black White
% 4 3900 1620 3900 1720 5 0 0 0
% 1
\put(39.0000,-17.2000){\makebox(0,0){1}}%
% LINE 2 0 3 0 Black White
% 2 3600 1400 3600 2000
% 
\special{pn 8}%
\special{pa 3600 1400}%
\special{pa 3600 2000}%
\special{fp}%
% STR 2 0 3 0 Black White
% 4 3500 1420 3500 1520 5 0 0 0
% -1
\put(35.0000,-15.2000){\makebox(0,0){-1}}%
% STR 2 0 3 0 Black White
% 4 3700 1620 3700 1720 5 0 0 0
% 0
\put(37.0000,-17.2000){\makebox(0,0){0}}%
% STR 2 0 3 0 Black White
% 4 3700 1820 3700 1920 5 0 0 0
% -1
\put(37.0000,-19.2000){\makebox(0,0){-1}}%
% STR 2 0 3 0 Black White
% 4 3500 1620 3500 1720 5 0 0 0
% 1
\put(35.0000,-17.2000){\makebox(0,0){1}}%
% STR 2 0 3 0 Black White
% 4 4100 820 4100 920 5 0 0 0
% 1
\put(41.0000,-9.2000){\makebox(0,0){1}}%
% STR 2 0 3 0 Black White
% 4 4100 1020 4100 1120 5 0 0 0
% -1
\put(41.0000,-11.2000){\makebox(0,0){-1}}%
% STR 2 0 3 0 Black White
% 4 4100 1220 4100 1320 5 0 0 0
% 1
\put(41.0000,-13.2000){\makebox(0,0){1}}%
% BOX 2 0 3 0 Black White
% 2 3600 800 3400 1800
% 
\special{pn 8}%
\special{pa 3600 800}%
\special{pa 3400 800}%
\special{pa 3400 1800}%
\special{pa 3600 1800}%
\special{pa 3600 800}%
\special{pa 3400 800}%
\special{fp}%
% STR 2 0 3 0 Black White
% 4 3700 1220 3700 1320 5 0 0 0
% -1
\put(37.0000,-13.2000){\makebox(0,0){-1}}%
% STR 2 0 3 0 Black White
% 4 3700 1020 3700 1120 5 0 0 0
% 1
\put(37.0000,-11.2000){\makebox(0,0){1}}%
% STR 2 0 3 0 Black White
% 4 3700 820 3700 920 5 0 0 0
% -1
\put(37.0000,-9.2000){\makebox(0,0){-1}}%
% STR 2 0 3 0 Black White
% 4 3700 620 3700 720 5 0 0 0
% 1
\put(37.0000,-7.2000){\makebox(0,0){1}}%
% STR 2 0 3 0 Black White
% 4 3500 820 3500 920 5 0 0 0
% 0
\put(35.0000,-9.2000){\makebox(0,0){0}}%
% STR 2 0 3 0 Black White
% 4 3500 1020 3500 1120 5 0 0 0
% -1
\put(35.0000,-11.2000){\makebox(0,0){-1}}%
% STR 2 0 3 0 Black White
% 4 3500 1220 3500 1320 5 0 0 0
% 1
\put(35.0000,-13.2000){\makebox(0,0){1}}%
% STR 2 0 3 0 Black White
% 4 3800 2300 3800 2400 5 0 0 0
% $dA_{i,j}$
\put(38.0000,-24.0000){\makebox(0,0){$dA_{i,j}$}}%
% STR 2 0 3 0 Black White
% 4 1500 2300 1500 2400 5 0 0 0
% $A_{i,j}$
\put(15.0000,-24.0000){\makebox(0,0){$A_{i,j}$}}%
\end{picture}}%
\caption{The case of $k=2g$.}
\label{torus2}
\end{center}
\end{figure}

Suppose that $k_2=2g-1$.
Let $A_{i,j}$ be the entry with $k_2(i-c)+j=-d$.
If $A_{i+1,j}=1$ then since $A_{i+1,j+1}=1$ holds, this does not satisfy alternating condition in Theorem~\ref{OS1}.
If $A_{i+1,j}=0$ then $(i,j)$ is an end point of the non-zero curve.
If $A_{i+1,j}=-1$, then $dA_{i+1,j}=-2$ holds.
This contradicts to (\ref{eq2}) in Lemma~\ref{alternating}.
\end{proof}

Thus we have only to consider the case of $k_2\le 2g-2$ to investigate the lens space knot in an $\lz$
to classify the lens space realization.

%\begin{cor}
%\label{k22}
%Let $K$ be a lens space knot in $Y$ of type-(A) with surgery parameter $(p,k)$.
%$\Delta_K(t)=\Delta_{T(2,2g(K)+1)}(t)$ if and only if $k_2=2g(K)+1,2g(K)$.
%In particular, if for any positive integer $m$ $\Delta_K(t)\neq \Delta_{T(2,2m+1)}(t)$ is satisfied, then $k_2\le 2g(K)-2$ holds.
%\end{cor}
%\begin{proof}
%From Corollary~\ref{k22g1}, this assertion holds.
%\end{proof}
\subsection{Lens surgeries with $(2,r)$-torus knot polynomial.}
We prove Theorem~\ref{2ntorusknot} (the classification of lens space knots in an $\lz$ with $(2,r)$-torus knot polynomial).
\begin{proof}
In Theorem~\ref{k22g1}, we proved $4\Rightarrow 1$ and $4\Rightarrow 2$.
Here we prove $1\Rightarrow 4$.
Suppose that $\Delta_K(t)=\Delta_{T(2,2g+1)}$, $k_2\le 2g-2$ and $e=-1$.
If $\mathcal{N}^{A,m}_l\cap \mathcal{N}^{A,m-1}_{l+1}$ is some interval, then 
for some integer $x$, $dA(x)=\pm2$ holds or some non-zero curve is passing between different non-zero regions.
Both the cases give contradiction.

Therefore $\mathcal{N}^{A,m}\cap \mathcal{N}^{A,m-1}$ is a set of discrete (corner) points or
empty.
See Figure~\ref{2torus}.
\begin{figure}[bthp]
\begin{center}
\input{2torus.tex}
\caption{$\Delta_K(t)=\Delta_{T(2,2g+1)}$, $k_2\le 2g-2$.}
\label{2torus}
\end{center}
\end{figure}
If $k_2\ge 4$, then there exists some integer $x$ such that $dA(x)=1$ and $dA(x+ek)=-1$.
Thus $p<3k_2$ holds.
Now, from the condition $k_2\le 2g-2$, there exists some integer $y$ such that
$dA(y)=dA(y+k)=dA(y+2k)=0$ holds.
This is contradiction.
Therefore $2g-1\le k_2$ holds.
If $k_2\le 3$ then the parameter is $(p,2,3)$ or $(p,2,2)$ only.
The parameters correspond to $(7,2,3)$ or $(5,2,2)$ respectively.
These cases are also $\Delta_K(t)=\Delta_{T(2,3)}(t)$.

We prove $2\Rightarrow 1$.
If $(p,2)$ is lens surgery parameter, then by using Theorem~\ref{yamkatan},
$\Delta_K(t)=\Delta_{T(2,2g+1)}$ holds.

The equivalence $3\Leftrightarrow 2$ is due to the definition of realization of lens surgery parameter.
The equivalence $1\Leftrightarrow 5$ is due to the definition of $\alpha$-index.
%\input{2torus2.tex}
%\caption{The case of $p=|k_2|+2g+|k_2|+1$.}
%\label{2torus2}
\end{proof}

Next, we classify the knots $K_{p,k}$ with torus knot polynomial $T(2,r)$.

{\it Proof of Theorem~\ref{DT22r+1}.}
Form Proposition~\ref{upk2bound}, we may assume that $k_2\le 2g+1$.
Suppose that $k_2\ge 4$ as in the first picture in Figure~\ref{Bn2d+1po}.
\begin{figure}[bthp]
\begin{center}
%WinTpicVersion4.30d
{\unitlength 0.1in%
\begin{picture}( 31.8000, 30.1000)( 13.6000,-32.0000)%
% BOX 2 0 3 0 Black White
% 2 2120 1800 1920 2800
% 
\special{pn 8}%
\special{pa 2120 1800}%
\special{pa 1920 1800}%
\special{pa 1920 2800}%
\special{pa 2120 2800}%
\special{pa 2120 1800}%
\special{pa 1920 1800}%
\special{fp}%
% BOX 2 0 3 0 Black White
% 2 1920 1000 1720 2000
% 
\special{pn 8}%
\special{pa 1920 1000}%
\special{pa 1720 1000}%
\special{pa 1720 2000}%
\special{pa 1920 2000}%
\special{pa 1920 1000}%
\special{pa 1720 1000}%
\special{fp}%
% BOX 2 0 3 0 Black White
% 2 1720 200 1520 1200
% 
\special{pn 8}%
\special{pa 1720 200}%
\special{pa 1520 200}%
\special{pa 1520 1200}%
\special{pa 1720 1200}%
\special{pa 1720 200}%
\special{pa 1520 200}%
\special{fp}%
% STR 2 0 3 0 Black White
% 4 1800 3100 1800 3200 5 0 0 0
% $dB$-function
\put(18.0000,-32.0000){\makebox(0,0){$dB$-function}}%
% STR 2 0 3 0 Black White
% 4 1820 1000 1820 1100 5 0 0 0
% 0
\put(18.2000,-11.0000){\makebox(0,0){0}}%
% STR 2 0 3 0 Black White
% 4 1820 1200 1820 1300 5 0 0 0
% -1
\put(18.2000,-13.0000){\makebox(0,0){-1}}%
% STR 2 0 3 0 Black White
% 4 2020 2000 2020 2100 5 0 0 0
% -1
\put(20.2000,-21.0000){\makebox(0,0){-1}}%
% STR 2 0 3 0 Black White
% 4 2020 1800 2020 1900 5 0 0 0
% 0
\put(20.2000,-19.0000){\makebox(0,0){0}}%
% STR 2 0 3 0 Black White
% 4 1620 400 1620 500 5 0 0 0
% -1
\put(16.2000,-5.0000){\makebox(0,0){-1}}%
% STR 2 0 3 0 Black White
% 4 1620 1000 1620 1100 5 0 0 0
% 1
\put(16.2000,-11.0000){\makebox(0,0){1}}%
% STR 2 0 3 0 Black White
% 4 1820 1800 1820 1900 5 0 0 0
% 1
\put(18.2000,-19.0000){\makebox(0,0){1}}%
% STR 2 0 3 0 Black White
% 4 2020 2200 2020 2300 5 0 0 0
% 0
\put(20.2000,-23.0000){\makebox(0,0){0}}%
% LINE 2 2 3 0 Black White
% 2 2340 2400 2540 3200
% 
\special{pn 8}%
\special{pa 2340 2400}%
\special{pa 2540 3200}%
\special{dt 0.045}%
% LINE 2 2 3 0 Black White
% 2 1360 190 1450 550
% 
\special{pn 8}%
\special{pa 1360 190}%
\special{pa 1450 550}%
\special{dt 0.045}%
% STR 2 0 3 0 Black White
% 4 2020 2400 2020 2500 5 0 0 0
% 0
\put(20.2000,-25.0000){\makebox(0,0){0}}%
% STR 2 0 3 0 Black White
% 4 2020 2600 2020 2700 5 0 0 0
% 0
\put(20.2000,-27.0000){\makebox(0,0){0}}%
% STR 2 0 3 0 Black White
% 4 1620 600 1620 700 5 0 0 0
% 1
\put(16.2000,-7.0000){\makebox(0,0){1}}%
% STR 2 0 3 0 Black White
% 4 1620 800 1620 900 5 0 0 0
% -1
\put(16.2000,-9.0000){\makebox(0,0){-1}}%
% BOX 2 0 3 0 Black White
% 2 4320 1400 4120 2400
% 
\special{pn 8}%
\special{pa 4320 1400}%
\special{pa 4120 1400}%
\special{pa 4120 2400}%
\special{pa 4320 2400}%
\special{pa 4320 1400}%
\special{pa 4120 1400}%
\special{fp}%
% BOX 2 0 3 0 Black White
% 2 4120 1000 3920 2000
% 
\special{pn 8}%
\special{pa 4120 1000}%
\special{pa 3920 1000}%
\special{pa 3920 2000}%
\special{pa 4120 2000}%
\special{pa 4120 1000}%
\special{pa 3920 1000}%
\special{fp}%
% BOX 2 0 3 0 Black White
% 2 3920 600 3720 1600
% 
\special{pn 8}%
\special{pa 3920 600}%
\special{pa 3720 600}%
\special{pa 3720 1600}%
\special{pa 3920 1600}%
\special{pa 3920 600}%
\special{pa 3720 600}%
\special{fp}%
% STR 2 0 3 0 Black White
% 4 4020 1000 4020 1100 5 0 0 0
% 0
\put(40.2000,-11.0000){\makebox(0,0){0}}%
% STR 2 0 3 0 Black White
% 4 4020 1200 4020 1300 5 0 0 0
% 0
\put(40.2000,-13.0000){\makebox(0,0){0}}%
% STR 2 0 3 0 Black White
% 4 4220 1600 4220 1700 5 0 0 0
% 0
\put(42.2000,-17.0000){\makebox(0,0){0}}%
% STR 2 0 3 0 Black White
% 4 4220 1400 4220 1500 5 0 0 0
% 0
\put(42.2000,-15.0000){\makebox(0,0){0}}%
% STR 2 0 3 0 Black White
% 4 3820 800 3820 900 5 0 0 0
% 0
\put(38.2000,-9.0000){\makebox(0,0){0}}%
% STR 2 0 3 0 Black White
% 4 3820 600 3820 700 5 0 0 0
% 0
\put(38.2000,-7.0000){\makebox(0,0){0}}%
% STR 2 0 3 0 Black White
% 4 4020 1400 4020 1500 5 0 0 0
% 0
\put(40.2000,-15.0000){\makebox(0,0){0}}%
% STR 2 0 3 0 Black White
% 4 3820 1400 3820 1500 5 0 0 0
% 1
\put(38.2000,-15.0000){\makebox(0,0){1}}%
% STR 2 0 3 0 Black White
% 4 4020 1600 4020 1700 5 0 0 0
% -1
\put(40.2000,-17.0000){\makebox(0,0){-1}}%
% STR 2 0 3 0 Black White
% 4 4020 1800 4020 1900 5 0 0 0
% 1
\put(40.2000,-19.0000){\makebox(0,0){1}}%
% STR 2 0 3 0 Black White
% 4 4220 1800 4220 1900 5 0 0 0
% 0
\put(42.2000,-19.0000){\makebox(0,0){0}}%
% LINE 2 2 3 0 Black White
% 2 4340 2000 4540 2800
% 
\special{pn 8}%
\special{pa 4340 2000}%
\special{pa 4540 2800}%
\special{dt 0.045}%
% LINE 2 2 3 0 Black White
% 2 3560 590 3650 950
% 
\special{pn 8}%
\special{pa 3560 590}%
\special{pa 3650 950}%
\special{dt 0.045}%
% STR 2 0 3 0 Black White
% 4 4220 2000 4220 2100 5 0 0 0
% -1
\put(42.2000,-21.0000){\makebox(0,0){-1}}%
% STR 2 0 3 0 Black White
% 4 4220 2200 4220 2300 5 0 0 0
% 1
\put(42.2000,-23.0000){\makebox(0,0){1}}%
% STR 2 0 3 0 Black White
% 4 3820 1000 3820 1100 5 0 0 0
% 0
\put(38.2000,-11.0000){\makebox(0,0){0}}%
% STR 2 0 3 0 Black White
% 4 3820 1200 3820 1300 5 0 0 0
% -1
\put(38.2000,-13.0000){\makebox(0,0){-1}}%
% STR 2 0 3 0 Black White
% 4 4000 2700 4000 2800 5 0 0 0
% $dB$-function
\put(40.0000,-28.0000){\makebox(0,0){$dB$-function}}%
% STR 2 0 3 0 Black White
% 4 1820 1400 1820 1500 5 0 0 0
% 1
\put(18.2000,-15.0000){\makebox(0,0){1}}%
% STR 2 0 3 0 Black White
% 4 1820 1600 1820 1700 5 0 0 0
% -1
\put(18.2000,-17.0000){\makebox(0,0){-1}}%
% STR 2 0 3 0 Black White
% 4 2220 1800 2220 1900 5 0 0 0
% -1
\put(22.2000,-19.0000){\makebox(0,0){-1}}%
% STR 2 0 3 0 Black White
% 4 2020 1400 2020 1500 5 0 0 0
% -1
\put(20.2000,-15.0000){\makebox(0,0){-1}}%
% STR 2 0 3 0 Black White
% 4 2020 1600 2020 1700 5 0 0 0
% 1
\put(20.2000,-17.0000){\makebox(0,0){1}}%
% STR 2 0 3 0 Black White
% 4 2020 1200 2020 1300 5 0 0 0
% 1
\put(20.2000,-13.0000){\makebox(0,0){1}}%
% STR 2 0 3 0 Black White
% 4 2220 2000 2220 2100 5 0 0 0
% 1
\put(22.2000,-21.0000){\makebox(0,0){1}}%
% STR 2 0 3 0 Black White
% 4 2220 2400 2220 2500 5 0 0 0
% 1
\put(22.2000,-25.0000){\makebox(0,0){1}}%
% STR 2 0 3 0 Black White
% 4 2220 2200 2220 2300 5 0 0 0
% -1
\put(22.2000,-23.0000){\makebox(0,0){-1}}%
% STR 2 0 3 0 Black White
% 4 2020 1000 2020 1100 5 0 0 0
% -1
\put(20.2000,-11.0000){\makebox(0,0){-1}}%
% STR 2 0 3 0 Black White
% 4 1820 800 1820 900 5 0 0 0
% 1
\put(18.2000,-9.0000){\makebox(0,0){1}}%
% STR 2 0 3 0 Black White
% 4 1820 400 1820 500 5 0 0 0
% 1
\put(18.2000,-5.0000){\makebox(0,0){1}}%
% STR 2 0 3 0 Black White
% 4 1820 600 1820 700 5 0 0 0
% -1
\put(18.2000,-7.0000){\makebox(0,0){-1}}%
% STR 2 0 3 0 Black White
% 4 4220 1200 4220 1300 5 0 0 0
% 1
\put(42.2000,-13.0000){\makebox(0,0){1}}%
% STR 2 0 3 0 Black White
% 4 4220 1000 4220 1100 5 0 0 0
% -1
\put(42.2000,-11.0000){\makebox(0,0){-1}}%
% STR 2 0 3 0 Black White
% 4 4420 1600 4420 1700 5 0 0 0
% 1
\put(44.2000,-17.0000){\makebox(0,0){1}}%
% STR 2 0 3 0 Black White
% 4 4420 1400 4420 1500 5 0 0 0
% -1
\put(44.2000,-15.0000){\makebox(0,0){-1}}%
% STR 2 0 3 0 Black White
% 4 4020 600 4020 700 5 0 0 0
% -1
\put(40.2000,-7.0000){\makebox(0,0){-1}}%
% STR 2 0 3 0 Black White
% 4 4020 800 4020 900 5 0 0 0
% 1
\put(40.2000,-9.0000){\makebox(0,0){1}}%
% STR 2 0 3 0 Black White
% 4 1620 200 1620 300 5 0 0 0
% 0
\put(16.2000,-3.0000){\makebox(0,0){0}}%
\end{picture}}%
\caption{$\Delta_{K_{p,k}}(t)=\Delta_{T(2,2g+1)}(t)$.}
\label{Bn2d+1po}
\end{center}
\end{figure}
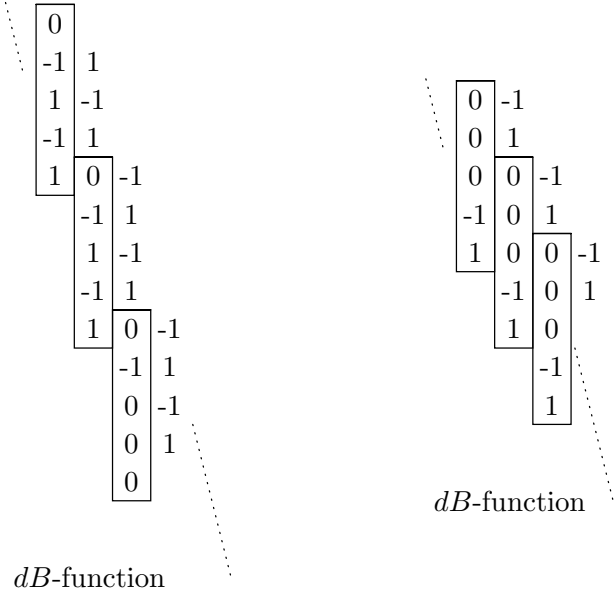
Then for any $i$ there exists an integer $j$ such that
$$\Phi(l-1)p-([[q(l-1)]]_p+i)k_2=j-1\ \ \ \text{for some }l\in I_{k_2}$$
$$\Phi(l'-1)p-([[q(l'-1)]]_p+i)k_2=j+1\ \ \ \text{for some }l'\in I_{k_2},$$
Thus $l,l'$ satisfy
$$l\equiv 1-qi+k_2(j+1),\ \ l'\equiv 1-qi+k_2(j-1)\bmod p,$$
where $1\le l,l'\le k_2$ holds.
Thus, in particular, $p<3k_2$ holds.

If $2k_2+3<p<3k_2$, then there exists an integer $j_0$ such that $[1-qi+k_2(j_0+1)]_p\not\in I_{k_2}$
and $[1-qi+k_2(j_0+2)]_p\not\in I_{k_2}$.
This is contradiction to $3k_2<p$.
Thus we obtain $p\le 2k_2+2$.
Since $(p,k_2)=1$, then we have $p=2k_2+1$.
Then we have $k=2$ and $e=-1$.
This knot $K_{2k_2+1,2}$ is in $S^3$.

Suppose that $k_2\le 3$ (the second picture in Figure~\ref{Bn2d+1po}).
From the inequality $k\le k_2$, in this case, we have $(p,k)=(5,2), (8,3)$, or $(10,3)$ only.
Here the non-zero sequence of $K_{8,3}$ is
$$NS_h(K_{8,3})=(4,3,1,0)$$
and for $K_{10,3}$ can be seen in Table~\ref{nonLspace23}.
These are not $(2,r)$-torus knot polynomials.
Thus this case also holds.
\qed
\subsection{An $\alpha$-index inequality}%%%%%%%%%%%%%%%%%
In this section we prove the theorems in Section~\ref{dualclassbounds}, and \ref{nthtermofAlex}.
We prove Theorem~\ref{them2}.
\medskip\\
{\it Proof of Theorem~\ref{them2}.}
Suppose that $X=(A,m)$ or $B$.
Let $\mathcal{N}^X$ be the non-zero region.
%We suppose the vertical segment $S:=\{i=i_0+\frac{1}{2}\}\cap (\mathcal{N}^{A,m}_{i_0}\setminus \mathcal{N}^{A,m}_{i_0+1})$ is a part of the right boundary of $\mathcal{N}^{A,m}_{i_0}$.
Let $(i_0,j_0)$ be the top lattice point in $\mathcal{N}_{i_0}^X$.

Suppose that there exists an integer $0<j<k_2$ with $X_{i_0,j_0-j}=1$.
Since any non-zero curve has no end points, there is the next point $(i_0,j_0-j-1)$ of $(i_0,j_0-j)$.
Thus $X_{i_0,j_0-j-1}=-1$ holds.
This means $n_1-k_2+1\le i\le n_1$ is included in the adjacent region.
The leftmost picture in Figure~\ref{alternative} presents this argument.

If $X_{i_0,j_0-k_2+1}=1$, then $(i_0+1,j_0-k_2)$ (the lattice point corresponding to $a_g(K)$) is the next point of $(i_0,j_0-k_2+1)$.
Thus $X_{i_0,j_0-k_2}=0$ holds.
Then $\alpha(K)=k_2-1$ (the second picture in Figure~\ref{alternative}).

Let $s$ be the adjacent length.
If $X_{i_0,j_0-k_2+1}\neq 1$, then $X_{i_0,j_0-k_2}=1$ and $n_1-k_2$ is in the adjacent region.
Thus, we have $n_1-k_2=n_{2s_2-1}$ for some integer $1<s_2\le s$.
This case is one of the third, fourth or fifth pictures in Figure~\ref{alternative}.
The latter case is $\alpha(K)=k_2$ holds.
If $k<k_2$ holds, considering the $A'$-matrix, we also obtain $n_1-k=n_{2s_1-1}$ for some integer $1<s_1<s_2$.
\begin{figure}[bthp]
\begin{center}
\input{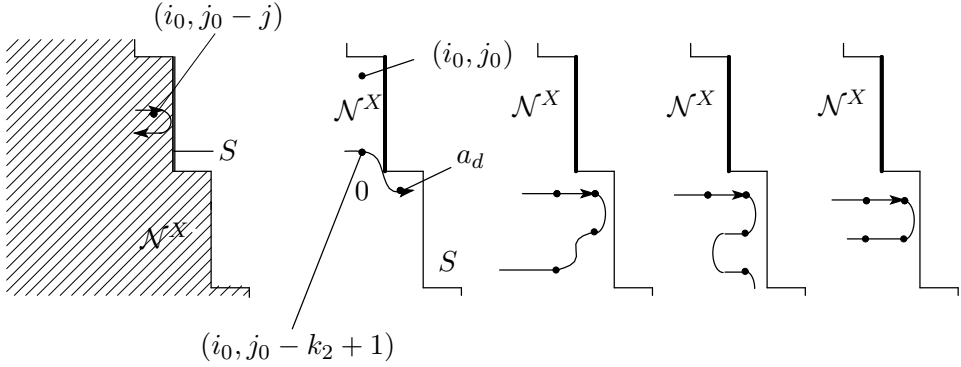}
\caption{The cases of $0\le j< k_2-1$ or $j=k_2-1$.}
\label{alternative}
\end{center}
\end{figure}
\qed

We prove Corollary~\ref{main}.
\begin{proof}
Suppose that $n_2<n_1-1$.
Then $\alpha$ is equal to $0$.
By using the right previous inequality $k_2\le \alpha+1$, we have $k=k_2=1$.
This is the surgery parameter of trivial knot surgery.
\end{proof}

Here we give a proof of Theorem~\ref{a22a2cor}.
\begin{proof}
By using $mp=kk_2-e$ and Theorem~\ref{them2}, we have
$$p=\frac{kk_2-e}{m}\le k_2^2+1\le\ (\alpha(K)+1)^2+1=\alpha(K)^2+2\alpha(K)+2.$$
The last statement of this theorem is due to this inequality.
\end{proof}

%$$\Delta_K(t)=t^d-t^{d-1}+t^{d-2}-t^{d-d'}+\cdots,$$
%where $d'>3$.
%The latter case does not actually occur.
Here we give a classification of lens space knots with $\alpha(K)=2$.
\begin{cor}
\label{trefoilorollary}
Let $K$ be a lens space knot in an $\lz$ or $K_{p,k}$ in $Y_{p,k}$ with genus $g$.
$K$ is a lens space knot in an $\lz$ or $K_{p,k}$ with $\alpha(K)=2$ if and only if $\Delta_K(t)=t-1+t^{-1}$.

In other words, if $\Delta_K(t)$ with more than three non-zero coefficients
and is expanded as follows
$$\Delta_K(t)=t^g-t^{g-1}+t^{g-2}-t^{g-g'}+\cdots,$$
then $g'=3$ holds.
\end{cor}
\begin{proof}
If $\alpha(K)=2$ then $p\le 10$ and $k_2\le 3$ by using Theorem~\ref{a22a2cor}.
The surgery parameters with $\alpha(K)=2$ are $(5,2),(7,2)$, $(8,3)$ or $(10,3)$.
The half non-zero sequences of the parameters $(8,3)$ and $(10,3)$ are $(4,3,1,0)$ and $(6,5,3,2,0)$ respectively.
These sequences do not satisfy $\alpha(K)=2$.
The parameters $(5,2)$ and $(7,2)$ are realized by lens surgeries by the trefoil.
Conversely if $\Delta_K(t)=t-1+t^{-1}$, then $\alpha(K)=2$ by the definition of $\alpha$-index.

If the $\Delta_K(t)$ has more than three non-zero coefficients, then $\alpha(K)>2$
holds.
Thus if $g,g-1,g-2$ are exponents of non-zero coefficients, then $g-3$ is 
in the adjacent region.
Thus $g'=3$ holds.
\end{proof}

%We prove Proposition~\ref{ds1}.
%\begin{proof}
%Suppose $e=1$.
%Let $(i_0,j)$ be the lattice point corresponding to a bottom coefficient $a_{-g}$ of the Alexander polynomial on $i=i_0$.
%The next-passing point is $(i_0+1,j)$ or $(i_0+1,j-1)$.
%Thus, there exists an integer $s_2$ such that $d_1-d_{s_2}=k_2$ or $k_2-1$.
%Exchanging $k$ and $k_2$, we obtain an integer $s_1$ such that $d_1-d_{s_1}=k$ or $k-1$.
%\begin{center}
%\input{prop11.tex}
%\end{center}
%In the case of $e=-1$, we obtain the same result by the similar argument.
%\end{proof}
In the following, we prove Theorem~\ref{3th}.
\begin{proof}
Let $K$ be a lens space knot in an $\lz$ or $K_{p,k}$ with genus $g$.
Suppose that $\alpha(K)=k_2-1$ and $s$ is the adjacent length i.e., $\alpha(K)=n_1-n_{2s-1}$ holds.
Let $(i_0,j_0)$ be the lattice point for the top coefficient $a_{g}(K)$ in $\mathcal{N}^X_{i_0}$.
From the assumption, we have $X_{i_0-1,j_0}=0$ and $X_{i_0-1,j_0+1}=1$.

In the case of $n_{2s}<n_{2s-1}-3$, the non-zero curve is as (a) in Figure~\ref{alternatfig}.
Since the non-zero curve is connected, $n_3=n_2-1$ holds.

In the case of $n_{2s}=n_{2s-1}-3$, the non-zero curve is as (b) in Figure~\ref{alternatfig}.
Since the non-zero curve is connected, $n_3\le n_2-2$ holds.

In the case of $n_{2s}=n_{2s-1}-2$, since the non-zero curve is connected, 
$n_2-n_3=n_{2s}-n_{2s+1}$ or $n_2-n_3=n_{2s}-n_{2s+1}+1$ holds.
See (c) in Figure~\ref{alternatfig}.
Furthermore, if $n_3=n_2-1$, then the former case holds.
Thus, $n_{2s+1}=n_{2s}-1$.
\begin{figure}[htbp]
\begin{center}
%WinTpicVersion4.30d
{\unitlength 0.1in%
\begin{picture}( 34.4500, 16.7000)(  1.3500,-22.7000)%
% LINE 2 0 3 0 Black White
% 2 1120 1600 920 1600
% 
\special{pn 8}%
\special{pa 1120 1600}%
\special{pa 920 1600}%
\special{fp}%
% LINE 2 0 3 0 Black White
% 2 400 600 400 600
% 
\special{pn 8}%
\special{pa 400 600}%
\special{pa 400 600}%
\special{fp}%
% VECTOR 2 0 3 0 Black White
% 2 950 1700 1065 1700
% 
\special{pn 8}%
\special{pa 950 1700}%
\special{pa 1065 1700}%
\special{fp}%
\special{sh 1}%
\special{pa 1065 1700}%
\special{pa 998 1680}%
\special{pa 1012 1700}%
\special{pa 998 1720}%
\special{pa 1065 1700}%
\special{fp}%
% LINE 2 0 3 0 Black White
% 2 955 1700 855 1530
% 
\special{pn 8}%
\special{pa 955 1700}%
\special{pa 855 1530}%
\special{fp}%
% VECTOR 2 0 3 0 Black White
% 2 1070 1834 950 1834
% 
\special{pn 8}%
\special{pa 1070 1834}%
\special{pa 950 1834}%
\special{fp}%
\special{sh 1}%
\special{pa 950 1834}%
\special{pa 1017 1854}%
\special{pa 1003 1834}%
\special{pa 1017 1814}%
\special{pa 950 1834}%
\special{fp}%
% ELLIPSE 2 0 3 0 Black White
% 4 1060 1769 1020 1700 1060 2006 1060 1240
% 
\special{pn 8}%
\special{ar 1060 1769 40 69  4.7123890  1.5707963}%
% VECTOR 2 0 3 0 Black White
% 2 735 1530 855 1530
% 
\special{pn 8}%
\special{pa 735 1530}%
\special{pa 855 1530}%
\special{fp}%
\special{sh 1}%
\special{pa 855 1530}%
\special{pa 788 1510}%
\special{pa 802 1530}%
\special{pa 788 1550}%
\special{pa 855 1530}%
\special{fp}%
% VECTOR 2 0 3 0 Black White
% 2 950 1990 1070 1990
% 
\special{pn 8}%
\special{pa 950 1990}%
\special{pa 1070 1990}%
\special{fp}%
\special{sh 1}%
\special{pa 1070 1990}%
\special{pa 1003 1970}%
\special{pa 1017 1990}%
\special{pa 1003 2010}%
\special{pa 1070 1990}%
\special{fp}%
% ELLIPSE 2 0 3 0 Black White
% 4 945 1915 970 1835 945 1305 945 2188
% 
\special{pn 8}%
\special{ar 945 1915 25 80  1.5707963  4.7123890}%
% LINE 2 0 3 0 Black White
% 2 1025 1700 1160 1345
% 
\special{pn 8}%
\special{pa 1025 1700}%
\special{pa 1160 1345}%
\special{fp}%
% STR 2 0 3 0 Black White
% 4 1165 1180 1165 1230 5 0 0 0
% $(i_0,j_0)$
\put(11.6500,-12.3000){\makebox(0,0){$(i_0,j_0)$}}%
% LINE 2 0 3 0 Black White
% 2 2270 1600 2070 1600
% 
\special{pn 8}%
\special{pa 2270 1600}%
\special{pa 2070 1600}%
\special{fp}%
% LINE 2 0 3 0 Black White
% 2 1600 600 1600 600
% 
\special{pn 8}%
\special{pa 1600 600}%
\special{pa 1600 600}%
\special{fp}%
% VECTOR 2 0 3 0 Black White
% 2 2105 1700 2220 1700
% 
\special{pn 8}%
\special{pa 2105 1700}%
\special{pa 2220 1700}%
\special{fp}%
\special{sh 1}%
\special{pa 2220 1700}%
\special{pa 2153 1680}%
\special{pa 2167 1700}%
\special{pa 2153 1720}%
\special{pa 2220 1700}%
\special{fp}%
% LINE 2 0 3 0 Black White
% 2 2105 1700 2005 1530
% 
\special{pn 8}%
\special{pa 2105 1700}%
\special{pa 2005 1530}%
\special{fp}%
% VECTOR 2 0 3 0 Black White
% 2 2210 1844 2090 1844
% 
\special{pn 8}%
\special{pa 2210 1844}%
\special{pa 2090 1844}%
\special{fp}%
\special{sh 1}%
\special{pa 2090 1844}%
\special{pa 2157 1864}%
\special{pa 2143 1844}%
\special{pa 2157 1824}%
\special{pa 2090 1844}%
\special{fp}%
% ELLIPSE 2 0 3 0 Black White
% 4 2220 1770 2190 1700 2220 2011 2220 1233
% 
\special{pn 8}%
\special{ar 2220 1770 30 70  4.7123890  1.5707963}%
% VECTOR 2 0 3 0 Black White
% 2 1890 1530 2010 1530
% 
\special{pn 8}%
\special{pa 1890 1530}%
\special{pa 2010 1530}%
\special{fp}%
\special{sh 1}%
\special{pa 2010 1530}%
\special{pa 1943 1510}%
\special{pa 1957 1530}%
\special{pa 1943 1550}%
\special{pa 2010 1530}%
\special{fp}%
% LINE 2 0 3 0 Black White
% 2 1990 1990 2095 1845
% 
\special{pn 8}%
\special{pa 1990 1990}%
\special{pa 2095 1845}%
\special{fp}%
% VECTOR 2 0 3 0 Black White
% 2 1990 1990 1870 1990
% 
\special{pn 8}%
\special{pa 1990 1990}%
\special{pa 1870 1990}%
\special{fp}%
\special{sh 1}%
\special{pa 1870 1990}%
\special{pa 1937 2010}%
\special{pa 1923 1990}%
\special{pa 1937 1970}%
\special{pa 1870 1990}%
\special{fp}%
% VECTOR 2 0 3 0 Black White
% 2 3410 1700 3525 1700
% 
\special{pn 8}%
\special{pa 3410 1700}%
\special{pa 3525 1700}%
\special{fp}%
\special{sh 1}%
\special{pa 3525 1700}%
\special{pa 3458 1680}%
\special{pa 3472 1700}%
\special{pa 3458 1720}%
\special{pa 3525 1700}%
\special{fp}%
% VECTOR 2 0 3 0 Black White
% 2 3285 1820 3165 1820
% 
\special{pn 8}%
\special{pa 3285 1820}%
\special{pa 3165 1820}%
\special{fp}%
\special{sh 1}%
\special{pa 3165 1820}%
\special{pa 3232 1840}%
\special{pa 3218 1820}%
\special{pa 3232 1800}%
\special{pa 3165 1820}%
\special{fp}%
% LINE 2 0 3 0 Black White
% 2 3415 1700 3315 1530
% 
\special{pn 8}%
\special{pa 3415 1700}%
\special{pa 3315 1530}%
\special{fp}%
% STR 2 0 3 0 Black White
% 4 3260 1650 3260 1700 5 0 0 0
% $0$
\put(32.6000,-17.0000){\makebox(0,0){$0$}}%
% VECTOR 2 0 3 0 Black White
% 2 3400 1820 3280 1820
% 
\special{pn 8}%
\special{pa 3400 1820}%
\special{pa 3280 1820}%
\special{fp}%
\special{sh 1}%
\special{pa 3280 1820}%
\special{pa 3347 1840}%
\special{pa 3333 1820}%
\special{pa 3347 1800}%
\special{pa 3280 1820}%
\special{fp}%
% ELLIPSE 2 0 3 0 Black White
% 4 3530 1760 3500 1700 3530 1966 3530 1300
% 
\special{pn 8}%
\special{ar 3530 1760 30 60  4.7123890  1.5707963}%
% VECTOR 2 0 3 0 Black White
% 2 3195 1530 3315 1530
% 
\special{pn 8}%
\special{pa 3195 1530}%
\special{pa 3315 1530}%
\special{fp}%
\special{sh 1}%
\special{pa 3315 1530}%
\special{pa 3248 1510}%
\special{pa 3262 1530}%
\special{pa 3248 1550}%
\special{pa 3315 1530}%
\special{fp}%
% LINE 2 0 3 0 Black White
% 2 920 1600 920 1000
% 
\special{pn 8}%
\special{pa 920 1600}%
\special{pa 920 1000}%
\special{fp}%
% LINE 2 0 3 0 Black White
% 2 2070 1600 2070 1000
% 
\special{pn 8}%
\special{pa 2070 1600}%
\special{pa 2070 1000}%
\special{fp}%
% LINE 2 0 3 0 Black White
% 2 3580 1600 3380 1600
% 
\special{pn 8}%
\special{pa 3580 1600}%
\special{pa 3380 1600}%
\special{fp}%
% LINE 2 0 3 0 Black White
% 2 3380 1600 3380 1000
% 
\special{pn 8}%
\special{pa 3380 1600}%
\special{pa 3380 1000}%
\special{fp}%
% LINE 2 0 3 0 Black White
% 2 3580 2100 3580 1600
% 
\special{pn 8}%
\special{pa 3580 2100}%
\special{pa 3580 1600}%
\special{fp}%
% LINE 2 0 3 0 Black White
% 2 2270 2100 2270 1600
% 
\special{pn 8}%
\special{pa 2270 2100}%
\special{pa 2270 1600}%
\special{fp}%
% LINE 2 0 3 0 Black White
% 2 1120 2100 1120 1600
% 
\special{pn 8}%
\special{pa 1120 2100}%
\special{pa 1120 1600}%
\special{fp}%
% STR 2 0 3 0 Black White
% 4 920 860 920 910 5 0 0 0
% $(a)$
\put(9.2000,-9.1000){\makebox(0,0){$(a)$}}%
% STR 2 0 3 0 Black White
% 4 2120 860 2120 910 5 0 0 0
% $(b)$
\put(21.2000,-9.1000){\makebox(0,0){$(b)$}}%
% STR 2 0 3 0 Black White
% 4 3380 860 3380 910 5 0 0 0
% $(c)$
\put(33.8000,-9.1000){\makebox(0,0){$(c)$}}%
% STR 2 0 3 0 Black White
% 4 1930 1655 1930 1705 5 0 0 0
% $0$
\put(19.3000,-17.0500){\makebox(0,0){$0$}}%
% LINE 2 0 3 0 Black White
% 2 3530 1820 3330 1820
% 
\special{pn 8}%
\special{pa 3530 1820}%
\special{pa 3330 1820}%
\special{fp}%
% STR 2 0 3 0 Black White
% 4 800 1650 800 1700 5 0 0 0
% $0$
\put(8.0000,-17.0000){\makebox(0,0){$0$}}%
% LINE 2 0 3 0 Black White
% 2 810 1530 560 1685
% 
\special{pn 8}%
\special{pa 810 1530}%
\special{pa 560 1685}%
\special{fp}%
% STR 2 0 3 0 Black White
% 4 440 1680 440 1730 5 0 0 0
% $n_{2s-1}$
\put(4.4000,-17.3000){\makebox(0,0){$n_{2s-1}$}}%
% STR 2 0 3 0 Black White
% 4 1640 2150 1640 2200 5 0 0 0
% $n_{2s}$
\put(16.4000,-22.0000){\makebox(0,0){$n_{2s}$}}%
% LINE 2 0 3 0 Black White
% 2 1920 1990 1635 2090
% 
\special{pn 8}%
\special{pa 1920 1990}%
\special{pa 1635 2090}%
\special{fp}%
% LINE 2 0 3 0 Black White
% 2 1960 1532 2260 1320
% 
\special{pn 8}%
\special{pa 1960 1532}%
\special{pa 2260 1320}%
\special{fp}%
% STR 2 0 3 0 Black White
% 4 2440 1190 2440 1240 5 0 0 0
% $n_{2s-1}$
\put(24.4000,-12.4000){\makebox(0,0){$n_{2s-1}$}}%
% VECTOR 2 0 3 0 Black White
% 2 3390 2121 3510 2121
% 
\special{pn 8}%
\special{pa 3390 2121}%
\special{pa 3510 2121}%
\special{fp}%
\special{sh 1}%
\special{pa 3510 2121}%
\special{pa 3443 2101}%
\special{pa 3457 2121}%
\special{pa 3443 2141}%
\special{pa 3510 2121}%
\special{fp}%
% LINE 2 0 3 0 Black White
% 2 3190 2121 3390 2121
% 
\special{pn 8}%
\special{pa 3190 2121}%
\special{pa 3390 2121}%
\special{fp}%
% STR 2 0 3 0 Black White
% 4 800 1790 800 1840 5 0 0 0
% $0$
\put(8.0000,-18.4000){\makebox(0,0){$0$}}%
% STR 2 0 3 0 Black White
% 4 800 1940 800 1990 5 0 0 0
% $0$
\put(8.0000,-19.9000){\makebox(0,0){$0$}}%
% STR 2 0 3 0 Black White
% 4 1930 1795 1930 1845 5 0 0 0
% $0$
\put(19.3000,-18.4500){\makebox(0,0){$0$}}%
% STR 2 0 3 0 Black White
% 4 2130 1945 2130 1995 5 0 0 0
% $0$
\put(21.3000,-19.9500){\makebox(0,0){$0$}}%
% SPLINE 2 0 3 0 Black White
% 4 3290 2124 3355 2124 3385 2234 3500 2264
% 
\special{pn 8}%
\special{pa 3290 2124}%
\special{pa 3326 2119}%
\special{pa 3354 2124}%
\special{pa 3368 2146}%
\special{pa 3373 2181}%
\special{pa 3379 2216}%
\special{pa 3392 2243}%
\special{pa 3417 2258}%
\special{pa 3450 2264}%
\special{pa 3500 2264}%
\special{fp}%
% SARROW 2 0 3 1 Black White
% 2 3488 2264 3500 2264
% 
\special{pn 8}%
\special{pa 3488 2264}%
\special{pa 3500 2264}%
\special{fp}%
\special{sh 1}%
\special{pa 3500 2264}%
\special{pa 3433 2244}%
\special{pa 3447 2264}%
\special{pa 3433 2284}%
\special{pa 3500 2264}%
\special{fp}%
% DOT 0 0 3 0 Black White
% 2 3255 1820 3255 2120
% 
\special{pn 4}%
\special{sh 1}%
\special{ar 3255 1820 16 16 0  6.28318530717959E+0000}%
\special{sh 1}%
\special{ar 3255 2120 16 16 0  6.28318530717959E+0000}%
% STR 2 0 3 0 Black White
% 4 2955 2270 2955 2320 5 0 0 0
% $n_{2s+1}$
\put(29.5500,-23.2000){\makebox(0,0){$n_{2s+1}$}}%
% LINE 2 0 3 0 Black White
% 4 3005 2270 3255 2120 3255 1820 3055 1920
% 
\special{pn 8}%
\special{pa 3005 2270}%
\special{pa 3255 2120}%
\special{fp}%
\special{pa 3255 1820}%
\special{pa 3055 1920}%
\special{fp}%
% STR 2 0 3 0 Black White
% 4 2855 1970 2855 2020 5 0 0 0
% $n_{2s}$
\put(28.5500,-20.2000){\makebox(0,0){$n_{2s}$}}%
\end{picture}}%
\caption{The case of $k_2=\alpha(K)+1$.}
\label{alternatfig}
\end{center}
\end{figure}
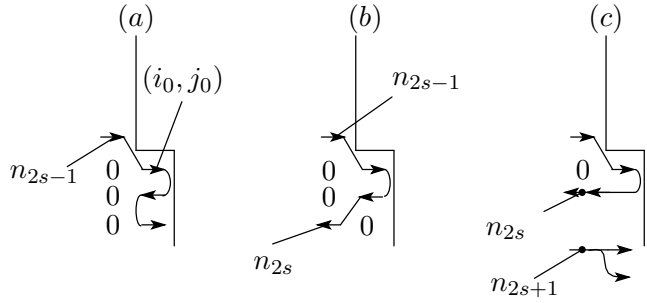
\end{proof}

\begin{cor}
\label{kk-1}
Suppose that $K$ is a lens space knot in an $\lz$ or $K_{p,k}$ in $Y_{p,k}$ with parameter $(p,k,k+1)$.
Then, $\alpha(K)=k$ holds.
\end{cor}
\begin{proof}
We use the $X$-function and $X'$-function, which is obtained by exchanging $k$ and $k_2$.

From the Theorem~\ref{them2}, there exist integers $s_1,s_2$ with $1<s_1\le s_2\le s$ such that
$k=n_1-n_{2s_1-1}$ and $k_2=n_1-n_{2s_2-1}$ or $=n_1-n_{2s_2-1}+1$.

If $s_1<s_2$, then 
$k=n_1-n_{2s_1-1}\le n_1-n_{2s_2-1}-2\le k_2-2=k-1$.
This is contradiction.
Thus $s_1=s_2$ holds.
As a result we have $k_2=n_1-n_{2s_2-1}+1$.
This is the case of (1) in Theorem~\ref{them2}, thus $s_1=s$, i.e., 
$k=\alpha(K)$ holds.

%If for some integer $s_1$, we have $d_1-d_{s_1}=k-1$, then the coefficient of $n_{2s_1-1}-1$ is $0$.
%Exchanging $k$ and $k_2$, we consider the $X$-function.
%The point corresponding to $n_{2s_1-1}$ has coefficient $1$ in the $X$-function, see Figure~\ref{kkk}.
%Then, the point would make an end point on the curve.
%This contradicts Lemma~\ref{noendunbound}.
%Thus $d_1-d_{s_1}=k=k_2-1=\alpha(K)$ holds.
\end{proof}
Here we prove the following lemma.
\begin{lem}
\label{0number}
Let $(p,k)$ be a surgery parameter for lens surgery in an $\lz$.
Then for any column of $dA$-matrix, there exists an integer $m$ such that the number of $0$s between the two pairs of the adjacent $\{-1,1\}$ is $m$ or $m+1$.
\end{lem}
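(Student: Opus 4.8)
The plan is to read off the statement from the geometry of the non-zero curve $\gamma$ (Theorem~\ref{non-zerocor}) near the top of the Alexander polynomial. First I would fix a non-zero region ${\mathcal N}=\bigcup_{l}{\mathcal N}_l$ attached to $(p,k)$ and locate, in the strip ${\mathcal N}_0$, the lattice points carrying the adjacent-region coefficients $a_d,a_{d-1},\ldots$; the preceding lemma $dA(0)=dA_{0,0}=e$, $dA(k)=dA_{0,1}=-e$ pins down the location of the top coefficient $a_d=1$ together with the local shape of $\gamma$ there. An adjacent pair $(a_{d_i},a_{d_i-1})=(1,-1)$ appears on ${\mathcal N}_0$ as two lattice points whose $j$-coordinates differ by $1$ (along a vertical line of the $A$-matrix the coefficient index changes by $1$), and the number of zeros between the $i$-th and the $(i+1)$-st pair equals $d_i-d_{i+1}-2$, i.e.\ the $j$-distance of these pairs minus $2$. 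So the lemma is equivalent to the assertion that the gaps $d_i-d_{i+1}$ take at most two distinct values, necessarily consecutive integers.

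Next I would trace $\gamma$ through the adjacent region. After $\gamma$ traverses the short cap over the $i$-th pair inside ${\mathcal N}_0$, the entries of the $A$-matrix immediately below $a_{d_i-1}$ on that vertical line are zero, so by Definition~\ref{nzc} and Lemma~\ref{possible} the curve must leave ${\mathcal N}_0$ through a connector into a neighbouring strip (the one above or below, according to $e$ and the monotonicity of $\gamma$ in $j$ from Lemma~\ref{noendunbound}); it then runs along that strip, and since consecutive strips overlap and the alternating condition (\ref{OScond}) forbids two equal adjacent non-zero coefficients, it is forced back into ${\mathcal N}_0$ exactly at the lattice point carrying $a_{d_{i+1}}$. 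The displacement $\gamma$ accumulates before re-entering --- hence $d_i-d_{i+1}$ --- is governed by the fixed strip offset $(\pm1,\mp k_2)$ together with the local pattern of $dA$, whose non-zero values occur in $\mp e,\pm e$ couples exactly $k$ apart by Lemma~\ref{alternating}. Consequently every such excursion of $\gamma$ has the same combinatorial shape, and the only freedom is whether the vertical amount attributable to $|k_2|$ is absorbed in $\lfloor\cdot\rfloor$ or $\lceil\cdot\rceil$ many steps; this produces the two consecutive possibilities, and they are uniform over $i$ because the $dA$-pattern is $p$-periodic and $\gamma$ revisits the same configuration at each pair. Taking $m$ to be the smaller value gives the statement.

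Finally I would dispose of the two special cases: when $2g(K)=p$ the single coefficient $a_g=2$ contributes only a double arrow of length one (Lemma~\ref{possible2}), which does not change the count; and the case $e=-1$ is the mirror image of $e=1$ about the $j$-axis. The main obstacle I expect is the second paragraph: rigorously showing that each excursion of $\gamma$ through a neighbouring strip crosses exactly one strip and re-enters ${\mathcal N}_0$ precisely at the next adjacent pair, rather than wandering across several strips or re-entering at a non-pair lattice point, and then extracting the exact rounding. This is the same kind of forcing argument --- via connectors, monotonicity of the curve in $j$, and the prohibition of an adjacent pair $1,1$ --- used in the proofs of Lemma~\ref{contained} and Corollary~\ref{kk-1}, but it must now be carried through the whole adjacent region while bookkeeping how much of $|k_2|$ is consumed at each turn.
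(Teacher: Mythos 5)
Your argument rests on a reformulation of Lemma~\ref{0number} that is not what the lemma asserts, and the reformulated statement is in fact false. The lemma counts runs of zeros of the $dA$-function down a vertical line $i=i_0$ of the $dA$-matrix, between the couples $\{-1,+1\}$ supplied by Lemma~\ref{alternating}; this is how it is used later (for instance in the proof of Theorem~\ref{2ntorusknot}, where ``the number of $0$s between two vertical pairs of $\{1,-1\}$ in the $dA$-function must be zero or one'' is converted into $p<3|k_2|$). You instead identify the quantity with the gaps $d_i-d_{i+1}$ of the adjacent sequence of the Alexander polynomial (minus two) and set out to prove that these gaps take only two consecutive values. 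That statement fails already for the type-(A) knot $T(5,7)$: its non-zero sequence begins $(12,11,7,6,5,4,2,1,0,-1,\dots)$, so the adjacent sequence is $(12,7,5,2,0)$ with gaps $5,2,3,2$ --- three distinct values, two of which are not consecutive. The lemma is nonetheless true for $T(5,7)$ precisely because it is a statement about $dA$ and not about the exponent gaps: with $(p,|k_2|)=(36,7)$ the $p-2|k_2|=22$ zeros on a vertical line fall into $|k_2|=7$ runs, each of length $3$ or $4$.

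The intended proof is a short arithmetic observation rather than a curve-tracing argument: by (\ref{eq2}), $dA_{i_0,j}=dA(i_0+jk)$ is non-zero exactly when the residue $[(i_0+jk)q]_p$ lies in the window $I_{|k_2|}\cup I_{-|k_2|}$ of length $2|k_2|$, and as $j$ increases by one this residue moves along an arithmetic progression of step $\pm k_2$ modulo $p$. Each visit to the window therefore lasts exactly two consecutive steps (producing the pair $\{1,-1\}$), and the number of steps needed to cross the complementary arc of length $p-2|k_2|$, starting from an offset in $\{0,\dots,|k_2|-1\}$, takes exactly two consecutive values; that is the $m$ and $m+1$ of the statement. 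Finally, even granting your reading, your write-up leaves its central step --- that each excursion of $\gamma$ crosses exactly one neighbouring strip, re-enters ${\mathcal N}_0$ at the next pair, and does so with a uniform ``rounding'' of $|k_2|$ --- explicitly unproved (you name it yourself as the main obstacle), so the proposal would be incomplete under either interpretation.
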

\begin{proof}
From the computation of $dA$-matrix we have the following:
$$dA_{i_0,j}=dA(i_0+ejk)=\begin{cases}-1&[i_0q_2+jk_2]_p\in I_{k_2}\\1&[i_0q_2+jk_2]_p\in I_{-k_2}\\0&\text{otherwise.}\end{cases}$$
Thus, the number of sequent $0$s in $dA$-matrix in a vertical line is determined by the sequence $[nk_2]_p\not\in I_{k_2}\cup I_{-k_2}$.
\end{proof}
Here we prove Theorem~\ref{thirdfourth}.
\begin{proof}
Let $K$ be a lens space knot in $\lz$ or $K_{p,k}$ in $Y_{p,k}$.
Suppose that $n_i$ is a non-zero sequence with at least four trerms.
If $(n_1,n_2,n_3,n_4)=(g,g-1,g',g'-r)$ with $r\ge 4$ and $g'<g-1$, then
by using Theorem~\ref{them2} and Theorem~\ref{3th}, $n_3+1=n_2$ holds.
From Corollary~\ref{trefoilorollary}, this case does not exists.

We suppose $K$ is a lens space knot in an $\lz$ and $r=3$.
If $p-k_2\ge 2g+1$, then since the $m$ in Lemma~\ref{0number} is $0$, and $k_2=3$ or $4$ holds.
The left two pictures in Figure~\ref{34} are non-zero curves of $k_2=3$ or $4$.
However, each of the pictures does not describe any non-zero curve of lens surgery.
Because, in the case of $k_2=3$, we cannot connect the curve as a connected curve
and in the case of $k_2=4$, the curve does not have a symmetry about a point (Proposition~\ref{symmetry}).

If $p-k_2<2g+1$, then the non-zero curve is the right picture in Figure~\ref{34}.
Since the number $m$ in Lemma~\ref{0number} is $0$, then $(p,k_2)=(2g+3,7),(2g+3,6),(2g+2,6)$, or $(2g+2,5)$ holds.
These cases are realized by any lens space surgery in an $\lz$.
\begin{figure}[htbp]
\begin{center}
\input{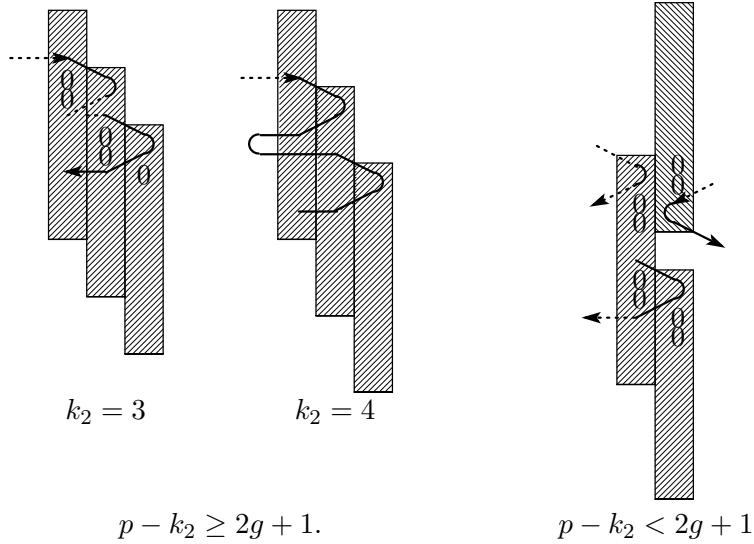}
\caption{The case of $(n_1,n_2,n_3,n_4)=(g,g-1,g',g'-3)$ and $k_2=3$ or $4$.}
\label{34}
\end{center}
\end{figure}
\end{proof}
\begin{example}
$K=T(u,u+1)$ for $u\ge 2$ satisfies this condition.
In this case we have $k=u=\alpha(T(u,u+1))$ and $k_2=u+1$.
The expansion of $\Delta_{T(u,u+1)}(t)$ is as follows:
$$\Delta_{T(u,u+1)}(t)=t^d-t^{d-1}+t^{d-u}-t^{d-u-2}+\cdots,$$
where $g=\frac{u(u-1)}{2}$.
The adjacent sequence is $(g,g-u)$.
\end{example}
%\begin{figure}[htbp]
%\begin{center}
%\input{kk-11.tex}
%\caption{The case of $k_2=k+1$.}
%\label{kkk}
%\end{center}
%\end{figure}
\begin{example}
The parameter $(19,7,8)$ is a lens surgery parameter of a lens space knot $Pr(-2,3,7)$.
The adjacent sequence is $AS=(5,2,0,-2)$.
The Alexander polynomial is expanded as follows:
$$\Delta_{Pr(-2,3,7)}(t)=t^5-t^4+\cdots+t^{5-7}-t^{5-9}+\cdots.$$
Here $\alpha(Pr(-2,3,7))=7$ holds and $s=s_1=s_2=4$.
Thus, we know that the inequality $k_2\le \alpha+1$ is best possible.

The Alexander polynomial of $K_{61,13}$ in $S^3$ is expanded as follows:
$$\Delta_{K_{61,13}}(t)=t^{22}-t^{21}+\cdots+t^{22-13}-t^{22-15}+\cdots.$$
Thus $\alpha(K_{61,13})=13$ holds.
\end{example}
\subsection{Lens surgeries with $g(K)\le 5$ or with at most $7$ non-zero coefficients.}%%%%%%%%%%%%%%%%%%%%
\label{57nonzerocoeff}
We classify lens space knots in an $\lz$ or $K_{p,k}$ in $Y_{p,k}$ with $g(K)\le 5$ or at most $7$ non-zero coefficients.
Before proving the theorems below we introduce some results.
Rasmussen proved the following:
\begin{theorem}[Rasmussen \cite{ras}]
\label{rasmussen}
Let $K$ be a knot in an L-space.
Suppose that some integral surgery
on $K$ yields a homology sphere $Y$. If $2g(K)<p+1$, then $Y$ is an L-space, while
if $2g(K)>p+1$, then $Y$ is not an L-space.
\end{theorem}
This means that if $p$ is a lens surgery slope of the lens space knot $K$ in a non-L-space homology sphere, then 
$$2g(K)\ge p+1$$
holds.

Here we prove the following:
\begin{prop}
\label{103}
Let $K_{p,k}$ be a lens space knot in a non-L-space $\zhs$.
Then $g(K_{p,k})\ge 6$ holds.
If $g(K_{p,k})=6$, then $(p,k)=(10,3)$ and $K_{10,3}$ lies in $\Sigma(2,3,7)$.
\end{prop}

\begin{proof}
When the slope is $p\le9$, any $K_{p,k}$ lies in $S^3$ or $\Sigma(2,3,5)$.
See the list in \cite{B} and \cite{tan1}.
Hence, any $K_{p,k}$ in a non-L-space $\zhs$ satisfies $p\ge 10$.
From Theorem~\ref{rasmussen}, those knots satisfy $g(K_{p,k})\ge \frac{p+1}{2}>5$.
By using the formula (\ref{ISTformulaitself}), we have $g(K_{10,3})=6$ and $K_{10,3}\subset \Sigma(2,3,7)$.
This example is all the $K_{p,k}$ in a non-L-space homology sphere $10\le p\le 11$ by checking such $Y_{p,k}$.
Any other $K_{p,k}$ with $p\ge12$ in a non-L-space homology sphere has $g(K_{p,k})\ge \frac{p+1}{2}>6$.
\end{proof}
The knot $K_{10,3}$ lies in $\Sigma(2,3,7)$ and satisfies $\Sigma(2,3,7)_{10}(K_{10,3})=L(10,1)$.
Check the case of $\ell=-1$ in $\text{A}_1$ or $\text{A}_2$ type in {\sc Table} 3 in \cite{tan4}.

Let $K_{p,k}$ be a knot in a non-L-space homology sphere.
Theorem~\ref{rasmussen} and Proposition~\ref{103} imply $g(K_{p,k})\ge 6$.
Here we list surgery parameters and realizations
of a lens space knot $K_{p,k}$ in an $\lz$ with $g(K)\le 5$.
\begin{theorem}
\label{dle5}
Let $K$ be a lens space knot in an $\lz$ with $g(K)\le 5$.
Then the parameter can be realized by either of the following lens space knots:
$$T(2,3),\ T(2,5),\ T(2,7),\ T(3,4),\ T(2,9),\ T(3,5),\ T(2,11),\text{ or }Pr(-2,3,7)$$
\end{theorem}
We classify the lens space knots in an $\lz$ or $K_{p,k}$ with at most $7$ non-zero coefficients of the 
Alexander polynomial.
\begin{theorem}
\label{atmost7}
Let $K$ be a lens space knot in an $\lz$ or $K_{p,k}$ in $Y_{p,k}$ with at most $7$ non-zero coefficients.
Then the lens space parameters can be realized by either of the following lens space knots:
$$T(2,3),\ T(2,5),\ T(4,3),\ T(2,7),\ T(3,5),\text{ or }T(4,5).$$
\end{theorem}
We can also find concrete lens surgery parameters from the realization.
%Furthermore, we classify lens surgery $NS_h=(d,d-1,d-3,d-4,\cdots,2,1,0)$ of type-(A).
\begin{prop}
\label{torusknot}
Let $K$ be a lens space knot in an $\lz$ with $(p,k)$.
Let $\gamma$ be a non-zero curve and $\gamma'$ the non-zero curve obtained by the $(k,-e)$ parallel translation.
Then, the parameter $(p,k)$ can be realized by a surgery parameter of a torus knot surgery in $S^3$,
if and only if $\gamma$ and $\gamma'$ are included in $\mathcal{N}^{A,m}$ and $\mathcal{N}^{A,m+e}$ respectively.
\end{prop}
\begin{proof}
The parallel transformation by a vector ${\bf v}_1=(1,-k_2)$ acts on $\mathcal{N}^{A,m}$
and the non-zero curve in $\mathcal{N}^{A,m}$.
The vector ${\bf v}_2=(0,ep)$ gives a congruence map to next non-zero region.
Lens space surgeries of $(r,s)$-torus knot in $S^3$ ($r<s$) are $p=rs\pm1$ in \cite{Mo}.
If $r>2$, then the parameter is $(rs-e,r,s)$.
If $r=2$, then the parameter is $(2s+1,2,s)$ $(e=-1)$ or $(2s-1,2,s-1)$ $(e=1)$.
Thus in both cases we have $p=kk_2-e$.

This equality is equivalent to $(k,-e)=(k,-kk_2+p)=k{\bf v}_1+e{\bf v}_2$.
Thus $(k,-e)$ moves the next non-zero regions in $\mathcal{N}^{A}=\cup_{m\in {\mathbb Z}}\mathcal{N}^{A,m}$ as $\mathcal{N}^{A,m}\mapsto \mathcal{N}^{A,m+1}$.
\end{proof}
In the case of $g\le 3$, the lens surgery polynomials are all torus knot polynomials.
Because, if the polynomial is not $T(2,n)$-torus knot, then the non-zero terms are at most five and $\alpha\le 3$ holds.
From Theorem~\ref{a22a2cor} and Corollary~\ref{thenumber}, $p\le 17$ and $k_2\le 5$.
Thus lens space surgery on this restriction is realized by some torus knot surgery.

\begin{lem}
\label{36lemma}
If $K$ is a lens space knot in an $\lz$ or $K_{p,k}$ with $3\le k\le k_2\le 6$, then the parameter $(p,k)$ is either of 
the following:
$$(31,5),(29,5),(26,5),(24,5),(13,5),(12,5),(25,4),(23,4),(21,4),(19,4),$$
$$(19,3),(17,4),(17,3),(15,4),(13,3),(11,3),(14,3),(16,3),(8,3),(10,3).$$
\end{lem}
\begin{proof}
If $k_2=5$ holds, then $k\le k_2=5$.
If $k=5$, then $p$ is the divisor of $26$ or $24$.
Other cases are all listed similarly.
\end{proof}
Suppose that $K$ is a lens space knot in an $\lz$ with the parameters $(26,5)$, $(24,5)$, $(13,5)$, $(12,5)$, $(17,4)$, $(15,4)$, $(19,3)$, $(17,3)$ and $(10,3)$.
Then some $a_i(K)$ (in Theorem~\ref{tangeprop}) is not absolutely less than or equal to $1$.
If one of others is not realized by a torus knot, it is realized by either of the following lens space knots:
\begin{itemize}
\item $K_{25,4}=(T(2,3))_{13,2}$ (the $(13,2)$-cable knot of $T(2,3)$)
\item $K_{23,4}=(T(2,3))_{11,2}$ (the $(11,2)$-cable knot of $T(2,3)$)
\item $K_{8,3}$ in $Y_{8,3}=\Sigma(2,3,5)$.
\end{itemize}

Here we classify lens space surgery with $g(K)=4$.
\begin{prop}
\label{g=4}
Let $K$ be a lens space knot in an $\lz$ with $g(K)=4$, then the parameters are $(17,2)$, $(19,2)$, or $(8,3)$.
The parameters are realized by $T(2,9)$ or
$K_{8,3}$ in $\Sigma(2,3,5)$.
\end{prop}
\begin{proof}
Let $(p,k)$ be a lens surgery parameter of a lens space knot $K$ with genus 4.
Suppose that $\Delta_{K}(t)$ does not equal to a $(2,r)$-torus knot polynomial.
Then the half non-zero sequence is $(4,3,1,0)$ or $(4,3,0)$ from Corollary~\ref{trefoilorollary}.
The $\alpha$-index of the polynomial is $5$ or $4$.
Thus $k_2\le 6$ holds from Theorem~\ref{them2}.
The parameters with $k_2\le 6$ are already classified above.
Then the parameters with $g(K)=4$ are realized by torus knots or $K_{8,3}$.
$NS_h(K_{8,3})=(4,3,1,0)$.
Since $(4,3,0)$ is not a non-zero sequence of a torus knot polynomial, this case does not occur.
\end{proof}
Note that the equality of non-zero sequence $NS_h(K_{8,3})=NS_h(K_{14,3})$ holds.
This proposition also proves that the sequence $(4,3,0)$ is not non-zero sequence of lens space knot in an $\lz$.

The author in \cite{tan2} proved the following:
\begin{theorem}[Theorem 16 in \cite{tan2}]
\label{15tan2}
If a knot $K$ satisfying $\Delta_K(t)=t^n-1+t^{-n}$ admits lens surgery, then
$n=1$ and moreover $K$ is the trefoil knot.
\end{theorem}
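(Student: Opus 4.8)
The plan is to deduce everything from the determination of the second non-zero exponent, Theorem~\ref{main}. Write $\Delta_K(t)=t^{n}-1+t^{-n}$ with $n\ge 1$ (for $n=0$ the polynomial is $1$, realized by the unknot, and this case is excluded from the statement). This polynomial is already flat and alternating, and its half non-zero sequence is $NS_h(K)=(n,0)$; in particular the top degree is $d=n$ and the second non-zero exponent is $n_2=0$. First I would note that, since $K\subset S^{3}$ admits an integral lens surgery $S^{3}_{p}(K)=L(p,q)$, the knot $K$ is a lens space knot in $S^{3}$, hence of type (A); by Greene's genus bound \cite{G} the surgery is admissible, so $K$ is a non-trivial (because $\Delta_K\neq 1$) admissible lens space knot, and by Ni's fibering theorem \cite{YN} together with the equality of genus and degree one has $g(K)=d=n$.

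The second step is to apply Theorem~\ref{main} to $K$: for a non-trivial knot of type (A) one has $n_2=d-1$. Combining this with $n_2=0$ and $d=n$ gives $n-1=0$, that is $n=1$, which settles the first assertion.

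It remains to identify $K$ when $n=1$. In that case $g(K)=d=1$, so $K$ is a genus-one knot in $S^{3}$ admitting a lens surgery, and by the classification of such knots \cite{GT} it is the trefoil $T(2,3)$, consistently with $\Delta_K(t)=t-1+t^{-1}=\Delta_{T(2,3)}(t)$; alternatively one may invoke Theorem~\ref{2ntorusknot} with $d=1$. The argument has no genuinely hard step: Theorem~\ref{main} does all the work once one knows that the hypothesis ``$K$ admits a lens surgery'' places $K$ among the admissible type-(A) lens space knots with $g(K)=\deg\Delta_K$. The only points needing a little care are exactly this reduction --- which uses Ni's fibering theorem and Greene's inequality --- and the final identification of $K$, where one should invoke the genus-one classification rather than try to recover $K$ from its Alexander polynomial alone.
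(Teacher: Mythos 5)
Your proof is correct and is exactly the route the paper intends: immediately after stating this theorem the author remarks that it ``is a corollary of Theorem~\ref{main},'' and your derivation (half non-zero sequence $(n,0)$ forces $n_2=0$, while Theorem~\ref{main} forces $n_2=d-1=n-1$, hence $n=1$, with the genus-one classification of Goda--Teragaito identifying the trefoil) is precisely that corollary. The preliminary reduction to an admissible type-(A) knot via Greene's bound and the fiberedness/genus equality is also consistent with the paper's setup, which notes that all lens space knots in $S^3$ are of type (A).
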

This theorem was in \cite{tan2} proved by a longer argument of coefficients, however, 
it is an immediate application of Corollary~\ref{main}.
In the present paper, we can continue to discuss the existence of lens surgery in terms of the number of non-zero coefficients of the Alexander polynomial.

For example, the polynomial $t^n-t^{n-1}+1-t^{-n+1}+t^{-n}$ satisfies the condition $n_2=d-1$ in Corollary~\ref{main},
however, there exists an upper bound of $n$ such that it is a lens surgery polynomial in an $\lz$ or $\Delta_{K_{p,k}}(t)$.
\begin{cor}
\label{5coeff}
Let $K$ be a lens space knot in an $\lz$ or $K_{p,k}$ with $5$ non-zero coefficients of
the Alexander polynomial.
Then $\Delta_K(t)=\Delta_{T(2,5)}(t)$, or $\Delta_{T(4,3)}(t)$.

In other words, if the lens surgery polynomial is of form $t^n-t^{n-1}+1-t^{-n+1}+t^{-n}$, then
$n=2$ or $3$ holds.

These polynomials do not coincide with $\Delta_{K_{p,k}}(t)$ in any non-L-space $\zhs$.
\end{cor}
\begin{proof}
Suppose that $\Delta_K(t)\neq \Delta_{T(2,r)}(t)$.
Then $\alpha(K)=n\ge 3$ holds.
From Corollary~\ref{thenumber}, the inequality $k_2\le 5$ holds.
Since p.288 (A) in \cite{tan1} $k<k_2$ holds.
Then $k_2=k+1=n+1\le 5$ holds from Theorem~\ref{them2}.
If $g(K)=4$, then the non-zero sequence is $(4,3,1,0)$ only.
Thus we have $g(K)\le 3$, that is, $g(K)=3$ from the condition above.
This case is realized by $T(3,4)$.
\end{proof}
\begin{cor}
\label{g=5}
Let $K$ be a lens space knot in an $\lz$ with $g(K)=5$ with parameter $(p,k)$.
Then $(p,k)$ is realized by a lens space surgery of $T(2,11)$ and $Pr(-2,3,7)$.
\end{cor}
\begin{proof}
We assume that $\Delta_K(t)\neq \Delta_{T(2,r)}(t)$.
From Theorem~\ref{2ntorusknot} and \ref{k22g1}, since $3\le k< k_2\le 8$ or $k=k_2=3$ holds.
In the case $k=k_2=3$, we have $p=8$ and $g(K)\neq 5$.
From Corollary~\ref{trefoilorollary} and \ref{5coeff}, the non-zero sequence is one of the following:
$$NS_h(K)=(5,4,1,0),(5,4,2,1,0),(5,4,2,0),(5,4,3,2,0).$$

If $NS_h(K)=(5,4,3,2,0)$ or $(5,4,2,0)$, then the $\alpha(K)$ is $5$ or $3$ respectively.
These cases are $k_2\le 6$ and are already classified in Lemma~\ref{36lemma}.
However any of these cases is not $g(K)=5$.
If $NS_h(K)=(5,4,1,0)$ then $\alpha(K)=6$ and $k_2\le 7$ due to Corollary~\ref{thenumber}.
If $k_2\le 6$, then by the same arguments as above $g(K)\neq 5$.
Thus $k_2=7$ holds.
From p.288 (A) in \cite{tan1} $k<k_2$ holds.
Then $k=4$ holds due to Theorem~\ref{them2}.
Thus $p$ is a divisor of $27$ or $29$.
Since $2k_2=14<p$ holds, $p=27$, $29$.
$(p,k)=(27,4)$ and $(29,4)$ are realized by $T(4,7)$.
However $g(K)\neq 5$ holds.

Suppose that $NS_h(K)=(5,4,2,1,0)$ and $\alpha(K)=7$.
If $k_2\le 6$, then the genus is not $5$ as described above.
Then $k_2=7,8$ holds.
From the above description $k<k_2$ holds.

Suppose that $k_2=7$.
If $k=3$, then there exists $dA=2$ point as in the leftmost picture in Figure~\ref{-237}.
If $k=5$, then $p$ is a divisor of $34$, $36$ and satisfies $p>2k_2=14$.
Thus $p=17,18,34$, or $36$ holds.
The case of $(p,k)=(17,5)$ does not satisfy the flat coefficients for the formula of $a_i(K)$ in Proposition~\ref{tangeprop}.
Other cases are $T(5,7)$ or $Pr(-2,3,7)$.
The former case does not satisfy $g(K)=5$.

Suppose that $k_2=8$.
The $k=3$ does not occur by the same reason as above.
If $k=5$, then $p$ is a divisor of $39$ or $41$ and satisfies $p>2k_2=16$.
Then we have $p=39,41$.
This case is realized by the $(5,8)$-torus knot surgery.
However $g(T(5,8))\neq 5$.
If $k=7$, then $p$ is a divisor of $55$ or $57$ and satisfies $p>2k_2=16$.
Then $p=19,55,57$.
The case of $(p,k)=(19,7)$ is realized by $Pr(-2,3,7)$ with $g(K)=5$.
The case of $(p,k)=(55,7)$ or $(57,7)$ is realized by $T(7,8)$ with $g(K)=21$.

Thus lens space knot in an $\lz$ with $g(K)=5$ is realized by $T(2,11)$ or $Pr(-2,3,7)$.
\begin{figure}[htbp]
\begin{center}
\input{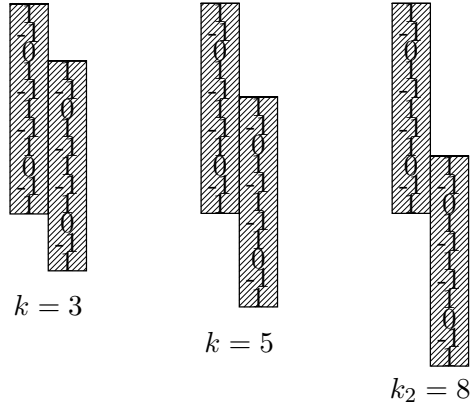}
\caption{The case of $NS_h(K)=(5,4,2,1,0)$.}
\label{-237}
\end{center}
\end{figure}

\end{proof}
Next, we classify lens space knots with $7$ non-zero coefficients.
\begin{cor}
\label{7coeff}
If the Alexander polynomial $\Delta_K(t)$ of lens space knot in an $\lz$ or $K_{p,k}$ with $7$ non-zero coefficients, 
then $\Delta_K(t)=\Delta_{T(2,7)}(t)$, $\Delta_{T(3,5)}(t)$, or $\Delta_{T(4,5)}(t)$.

If $K$ is a lens space knot in an $\lz$ or $K_{p,k}$ with
\begin{equation}
\label{7first}
\Delta_K(t)=t^g-t^{g-1}+t-1+t^{-1}-t^{-g+1}+t^{-g}
\end{equation}
\begin{equation}
\label{7second}
\Delta_K(t)=t^g-t^{g-1}+t^2-1+t^{-2}-t^{-g+1}+t^{-g},
\end{equation}
then when (\ref{7first}), $g=3,4$ and when (\ref{7second}), $g=6$.
\end{cor}
\begin{proof}
Suppose that $K$ is a lens space knot in an $\lz$ or $K_{p,k}$ and $\Delta_K(t)\neq \Delta_{T(2,2g+1)}(t)$.

If the lens surgery polynomial is of the form (\ref{7first}), then $g\ge 4$ and $\alpha(K)=g+1$.
If $k=k_2$, then $k_2\le 3$ and $k=k_2=1$ or $3$.
These cases do not satisfy the Alexander polynomial condition.
Thus we have $k<k_2$.
If $g\ge 5$ and $(k,k_2)=(g+1,g+2)$, $(g-1,g+2)$ $(g-1,g+1)$, then the non-zero curve is not disconnected.
Thus $g\le 4$ holds.
Then $(k,k_2)=(3,6),(3,5),(5,6)$.
The candidate $p$ is a divisor of $17,19,14,16,29,31$.
Among these candidates, genus $4$ cases are $(p,k,k_2)=(14,3,5)$ or $(16,3,5)$ only.
This is the $\Delta_{T(3,5)}(t)$.

If the lens surgery polynomial is of form (\ref{7second}), then $\alpha(K)=g-2$.
If $g\ge8$ then the non-zero curve is disconnected in the same reason as above.
Thus, $g\le 7$ holds.
Searching connected curves, we can find the possibilities of non-zero curves as in the pictures in Figure~\ref{7nonzero}.
The next is the table of the 4 non-zero curves.
$$\begin{array}{|c|c|c|c|}\hline
k\text{ or }k_2&g&\alpha\\\hline
4&6&4\\\hline
5&7&5\\\hline
5&6&4\\\hline
4&5&3\\\hline
\end{array}
$$
By using the inequality (\ref{alphainequation}) in Theorem~\ref{them2}, considering all the cases $(p,k,k_2)$,
we get $(p,k,g)=(19,4,6),(21,4,6)$ and $\Delta_K(t)=\Delta_{T_{4,5}}(t)$.
These cases are torus knot surgeries.
\begin{figure}[htbp]
\begin{center}
\input{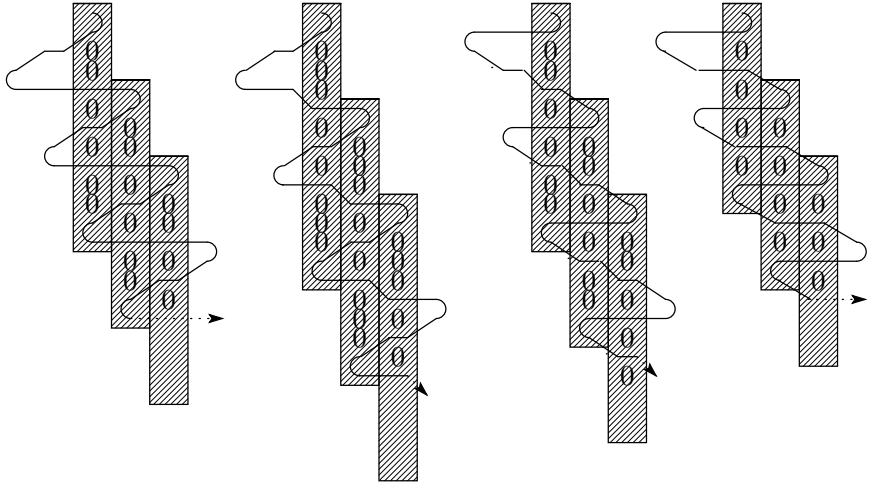}
\caption{The patterns with $g=6,7,6,5$ respectively.}
\label{7nonzero}
\end{center}
\end{figure}

\end{proof}
Here we prove Theorem~\ref{dle5} and \ref{atmost7}.
\begin{proof}
Suppose that $\Delta_K(t)\neq \Delta_{T(2,r)}(t)$.
If $g\le 3$, then then 3 and 5 non-zero coefficients classification, such parameters are $(11,3)$ or $(13,3)$ and are realized by $T(4,3)$.
If $g=4$ or $5$, then the parameters are $(8,3)$, $(18,5)$ or $(19,7)$ and are realized by $K_{8,3}$, $Pr(-2,3,7)$
due to Proposition \ref{g=4} and Corollary \ref{g=5}.
\end{proof}
\begin{proof}
From Theorem~\ref{15tan2} and Corollary~\ref{5coeff} and \ref{7coeff}, this theorem follows.
\end{proof}

The next classification of lens surgeries should be done for the surgeries with $9$ non-zero coefficients.
This is left for readers.
\begin{prob}
Classify the Alexander polynomial with $9$ non-zero coefficients.
The polynomials are of the form:
$$t^n-t^{n-1}+t^{m}-t^{m-1}+1+t^{-m}-t^{-m+1}-t^{-n}+t^n,$$

$$t^n-t^{n-1}+t^{m}-t^{m-2}+1+t^{-m}-t^{-m+2}-t^{-n}+t^n$$
or
$$t^n-t^{n-1}+t^{m}-t^{m-3}+1+t^{-m}-t^{-m+3}-t^{-n}+t^n.$$
\end{prob}
\subsection{Lens surgeries with $2g(K)-4\le k_2\le 2g(K)-2$.}%%%%%%%%%%%%%%%%%%%%%%%%%%%%%%%%%%%%%%%%%%%
\label{K2n2g-42g-1}
In Theorem~\ref{k22g1}, we classify lens space knots in an $\lz$ or $K_{p,k}$ in $Y_{p,k}$ in the cases of $2g(K)-1\le k_2$.
Here we classify lens space knots in an $\lz$ with $2g(K)-4\le k_2\le 2g(K)-2$.
\begin{theorem}
\label{234class}
If a lens space knot $K$ in an $\lz$ satisfies $2g(K)-4\le k_2\le 2g(K)-2$, then 
the lens surgery parameters are $(11,3)$, $(14,3)$, $(19,7)$ and are realized by 
$$T(3,4), T(3,5)\text{ or }Pr(-2,3,7),$$
respectively.
\end{theorem}
These lens surgeries in the theorem just correspond to the ones with the half non-zero sequence
$$(g,g-1,g-3,g-4,\cdots,2,1,0).$$

The following table is the classification of $NS$ with $2g(K)-k_2\le 4$.
$$\begin{array}{|c|c|c|}\hline
2g(K)-k_2&(p,k)&NS\\\hline
-1&(4d+3,2)&NS(T(2,2d+1))\\\hline
0&(4d+1,2)&NS(T(2,2d+1))\\\hline
1&\text{no}&\text{no}\\\hline
2&(19,7)&NS(Pr(-2,3,7))\\\hline
2&(11,3)&NS(T(3,4))\\\hline
3&(14,3)&NS(T(3,5))\\\hline
4&\text{ no }&\text{ no }\\\hline
\end{array}$$

Theorem~\ref{234class} is proven by decomposing it into the following three propositions.
\begin{prop}
\label{2g2cl}
Let $K$ be a lens space knot in an $\lz$ with genus $g$.
If $K$ satisfies $k_2=2g-2$, then 
the lens surgery parameters are $(11,3)$, $(19,7)$ and are realized by $T(3,4)$, or $Pr(-2,3,7)$ respectively.
\end{prop}
\begin{proof}
In the case of $2g-2=k_2$, the half non-zero sequence $NS_h(K)$ is 
$$(g,g-1,g-2,\cdots,0)\text{ or }(g,g-1,g-3,g-4,\cdots,2,1,0).$$
The former case corresponds to the surgery parameter realized by $(2,r)$-torus knot due to Theorem~\ref{2ntorusknot}.

We consider the latter case.
In the case of $g\ge 5$, there exists a sequent values $\cdots,-1,1,-1,1,\cdots$ for $dA$ on a column.
Hence, from Lemma~\ref{0number}, the $m$ in the statement is $0$.

By Lemma~\ref{0number}, we have $p-k_2=2g+1$ and $p-k_2=2g+2$.
See Figure~\ref{e-1case}.
Since the latter case does not satisfy $(p,k_2)=1$, we have $(p,k_2)=(4g-1,2g-2)$.

Consider the 3 lattice points on a vertical line with $dA=0$ among a period.
The distances of these 3 points are $5$, $2g-3$ and $2g-3$, see Figure~\ref{e-1case}.
Let $(i_0,j_0)$ be a lattice point with $i_0q+j_0k_2\equiv k_2+1$.
Then $(i_0,j_0+ek)$ and $(i_0,j_0+2ek)$ are the points satisfying $dA=0$.
Since $j_0,j_0+ek,j_0+2ek$ are $dA=0$ points in the period $p$.
Thus, $k=2g-3$ holds if $g>4$.
Therefore, we have $kk_2=(2g-3)(2g-2)=4g^2-10g+6\equiv3g+3\equiv \pm1\bmod 4g-1$.
Solving this equality, we have $(p,k)=(19,7)$ and $g=5$.
This case is realized by the $(-2,3,7)$-pretzel knot.

In the case of $g\le 4$, we have $g=3$ and $(p,k)=(11,3)$ from Theorem~\ref{dle5}.
\begin{figure}[htbp]
\begin{center}
\input{torus3.tex}
\caption{The $A$-function and $dA$-function in the case of $(p,k_2)=(4g-1,2g-2)$.}
\label{e-1case}
\end{center}
\end{figure}
\end{proof}
%\begin{figure}[htbp]
%\begin{center}
%\input{torus4.tex}
%\caption{}
%\label{e1case}
%\end{center}
%\end{figure}
%\begin{figure}[htbp]
%\begin{center}
%\input{34torus.tex}
%\caption{The $A$-function or $dA$-function of the $(3,4)$-torus knot.}
%\label{34torus}
%\end{center}
%\end{figure}
Next is the classification of the cases of $2g(K)-3=k_2$.
\begin{prop}
\label{2gk-3k2}
Let $K$ be a lens space knot in $\lz$ with genus $g$.
If $K$ satisfies $k_2=2g-3$, then 
the lens surgery parameter is $(14,3)$ and is realized by $T(3,5)$.
\end{prop}

\begin{proof}
From the condition $k_2=2g-3$, the half non-zero sequence of the Alexander polynomial is
$$(g,g-1,g-3,g-4,\cdots,2,1,0)\text{ or }(g,g-1,g-4,g-5,\cdots,2,1,0).$$
$N_1$ and $N_2$ denote the half non-zero sequences respectively.
The non-zero regions for these cases are the first picture in Figure~\ref{2g-3}.

If $p-k_2<2g+1$ holds, then the case of $NS_h=N_2$ holds and the $A$-function is the picture (a) in Figure~\ref{2g-3}.
This picture $\cup_{m\in {\mathbb Z}}\mathcal{N}^{A,m}$ has the symmetry of the parallel transformation $(2,0)$.
Thus $p=2$ holds.
This case does not occur.
Hence $p-k_2\ge 2g+1$ holds.

Suppose that $p-k_2\ge 2g+1$.
If $NS_h=N_2$, then the $dA$-function is the picture (b).
This picture is inconsistent with Lemma~\ref{0number}.
In fact in the sequence $\{dA_{i_0,j}\}_{j\in {\mathbb Z}}$ there exist parts of the following
$$\cdots, 1,-1,1,-1,\cdots,0,0,\cdots.$$
This violates Lemma~\ref{0number}.

If $NS_h=N_1$ and $g>5$, then the number $m$ in Lemma~\ref{0number} is $0$.
Thus $p-k_2=2g+1$ or $2g+2$, therefore, $(p,k_2)=(4g-2,2g-3),(4g-1,2g-3)$ respectively.
If $(p,k)=(4g-2,2g-3)$ then $p-2k_2=4$ is satisfied, i.e., the points with $dA=0$ have four points among a period (sequent $p$ lattice points).
If $(p,k)=(4g-1,2g-3)$ then $p-2k_2=5$ is satisfied, i.e., the points with $dA=0$ have five points among a period.
The distances of these 4 points (or 5 points) are $3,2g-5,5,2g-5$ (or $3,2g-5,3,3,2g-5$ respectively).
Here, the existence of the distance $3$ implies $3k_2-p=2g+8\le 4$.
However this is contradiction of the condition $g\ge 5$.

Suppose that $NS_h=N_1$ and $g\le 5$.
From Theorem~\ref{dle5} we can find $(p,k,k_2)=(14,3,5)$.

%Since the case of $p-k_2<2d+1$ does not occur, $p-3=8$, $9$, or $10$.
%Thus we have $(p,k_2)=(11,3),(13,3)$ due to $\gcd(p,|k_2|)=1$.
%This case $NS_h=(3,2,0)$ is realized by the torus knot $T(3,4)$.
\end{proof}
\begin{figure}[htbp]
\begin{center}
\input{2g-3.tex}
\caption{The $A$-function and $dA$-function in the case of $2g(K)-3=k_2$ and $N_2$.}
\label{2g-3}
\input{2d+12d+2.tex}
\caption{The $A$-function and $dA$-function in the case of $g\ge 5$.}
\end{center}
\end{figure}

\begin{figure}[htbp]
\begin{center}
\input{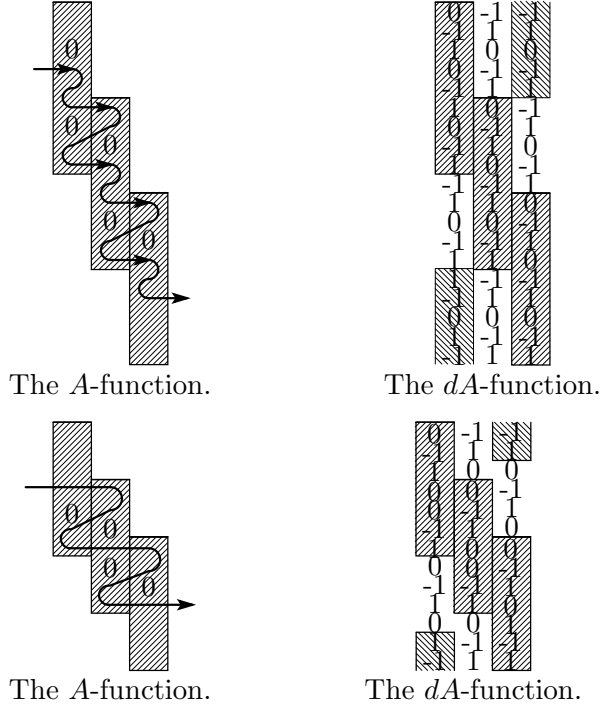}
\caption{The $A$-function or $dA$-function in the case of $NS_h=N_1$ and $g=3$.}
\label{2g-32}
\end{center}
\end{figure}

\begin{prop}
\label{2gk-4k2}
There exists no lens space knot $K$ in an $\lz$ with $2g(K)-4=k_2$ and parameter $(p,k)$.
\end{prop}
\begin{proof}
Let $g$ be a genus of such a knot.
From Corollary~\ref{main} $n_1=g$ and $n_2=g-1$ hold.
If $n_3=g-3$, then we can find a point with $dA=-2$ from the leftmost picture in Figure~\ref{422}.

Thus $n_3=g-2$ and then the parameter is $(p,2,3)$ or $n_4=g-3$ from Corollary~\ref{trefoilorollary}.
The former case does not satisfy $2g-4=k_2$.
We assume $n_4=g-3$.
If $n_5=g-4$ and $g>5$, then from the connectivity of non-zero sequence, $n_6=g-5$ and $\Delta_K(t)=\Delta_{T(2,2g+1)}(t)$.
See the second picture in Figure~\ref{422}.
However, this is contradiction to Theorem~\ref{2ntorusknot}.
\begin{figure}[htbp]
\begin{center}
\input{422.tex}
\caption{}
\label{422}
\end{center}
\end{figure}
If $n_5=g-4$ and $g\le 5$, then from the classification of Theorem~\ref{dle5} there are no such lens space knots.

Thus, consider $n_5=g-5$.
Then from the third picture in Figure~\ref{422} the non-zero sequence is the following sequence
$$(g,g-1,g-2,g-3,g-5,g-6,\cdots).$$
If $g\le 5$, then from Theorem~\ref{dle5}, lens space knots satisfying this do not exist.
Thus we assume $g>5$.
Then we can find the parts of the sequence $\{dA_{i_0,j}\}_{j\in {\mathbb Z}}$
$$-1,1,0,0,0,-1,1,\cdots, -1,1,-1,1,\cdots.$$
See the third and fourth picture in Figure~\ref{422}.
Such a sequence does not satisfy Lemma~\ref{0number}.
\end{proof}
\begin{prop}
Let $K$ be a lens space knot in an $\lz$ with genus $g$.
If the half non-zero sequence is $(g,g-1,g-3,g-4,\cdots,3,2,1,0)$
are $g=3,5$ only.
These cases can be realized by the $(3,4)$-torus knot and $(-2,3,7)$-pretzel knot respectively.
\end{prop}
%\begin{prop}
%Let $K$ be a lens space knot $K$ in $\lz$ with $2d(K)-5=p$.
%Then $K$ is realized by $(3,4)$-torus knot and $(-2,3,7)$-pretzel knot respectively.
%\end{prop}
\begin{proof}
We suppose the case of $k_2<2g-4$.
If $k_2<2g-5$, then we can find the following part in the sequence $\{dA_{i_0,j}\}_{j\in {\mathbb Z}}$
$$0,0,0,\cdots  1,-1,1,-1,\cdots,$$
as in the left two pictures in Figure~\ref{dd1d3}.
\begin{figure}[htbp]
\begin{center}
\input{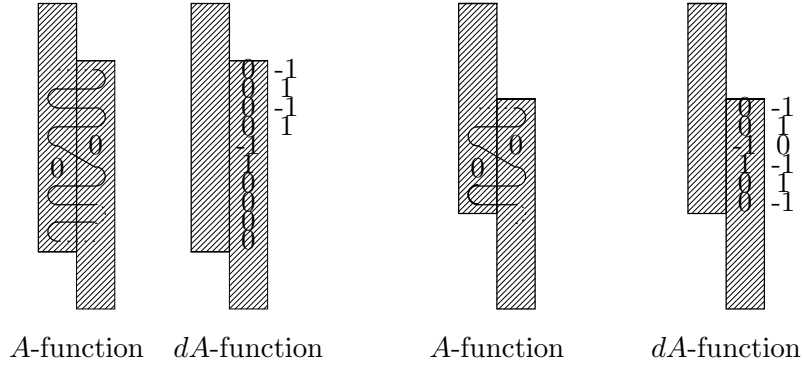}
\caption{The cases of $k_2<2g-5$ and $k_2=2g-5$.}
\label{dd1d3}
\end{center}
\end{figure}
Thus we obtain $k_2=2g-5$.
Suppose that $g>5$.
Then there exists the following part of the sequence $\{dA_{i_0,j}\}_{j\in {\mathbb Z}}$
$$0,0,1,-1,0,0,\cdots, -1,1,-1,0,\cdots,$$
as in the left two pictures in Figure~\ref{dd1d3}.
This is contradiction to Lemma~\ref{0number}.
Thus we have $g\le 5$.
From the classification of Theorem~\ref{dle5}, the Alexander polynomials are $\Delta_{Pr(-2,3,7)}(t)$
or $\Delta_{T(3,4)}(t)$.
The surgery parameters are $(11,3,4)$ or $(19,7,8)$.
\end{proof}
Here we prove Theorem~\ref{234class}.
\begin{proof}
From Proposition \ref{2g2cl}, \ref{2gk-3k2} and \ref{2gk-4k2} the assertion of this theorem follows.
\end{proof}

Finally, we describe the following tables.
\begin{example}%%%%%%%%%%%%%%%%%%%%%%%%%%
In Table~\ref{nonzeroL1} and \ref{nonzeroL2} we list $K_{p,k}$ with $g(K_{p,k})\le 30$ in non-L-space homology spheres $Y_{p,k}$.
These tables are obtained by the pillowcase method in \cite{tan4}.
\end{example}
\begin{table}[htbp]
\begin{center}
$\begin{array}{|c|c|c|c|c|c|}\hline
g&p&k&k_2&NS_h,AS&\alpha\\\hline
6&10&3&3&(6,5,3,2,0)&6\\
&&&&(6,3,0)&\\\hline
10&17&3&6&(10,9,7,6,4,3,1,0)&11\\
&&&&(10,7,4,1,-1)&\\\hline
12&12&5&5&(12,11,7,6,5,4,2,1,0)&14\\
&&&&(12,7,5,2,0,-2)&\\\hline
12&17&5&7&(12,11,7,6,5,4,2,1,0)&14\\
&&&&(12,7,5,2,0,-2)&\\\hline
12&19&3&6&(12,11,9,8,6,5,3,2,0)&12\\
&&&&(12,9,6,3,0)&\\\hline
13&23&7&10&(13,12,10,9,6,5,3,2,0)&13\\
&&&&(13,10,6,5,3,0)&\\\hline
14&13&5&5&(14,13,9,8,6,5,4,3,1,0)&15\\
&&&&(14,9,6,4,1,-1)&\\\hline
15&15&4&4&(15,14,11,10,7,6,4,2,0)&11\\
&&&&(15,11,7,4)&\\\hline
16&23&5&9&(16,15,11,10,7,5,2,0)&9\\
&&&&(16,11,7)&\\\hline
16&26&3&9&(16,15,13,12,10,9,7,6,4,3,1,0)&17\\
&&&&(16,13,10,7,4,1,-1)&\\\hline
16&26&7&11&(16,15,12,11,9,8,5,4,2,0)&14\\
&&&&(16,12,9,5,2)&\\\hline
16&29&8&11&(16,15,13,12,8,7,5,4,2,1,0)&18\\
&&&&(16,13,8,5,2,0,-2)&\\\hline
18&17&4&4&(18,17,14,13,10,9,6,4,2,0)&12\\
&&&&(18,14,10,6)&\\\hline
18&25&9&11&(18,17,9,8,7,6,4,3,2,1,0)&22\\
&&&&(18,9,7,4,2,0,-2,-4)&\\\hline
18&28&3&9&(18,17,15,14,12,11,9,8,6,5,3,2,0)&18\\
&&&&(18,15,12,9,6,3,0)&\\\hline
19&29&9&13&(19,18,12,11,10,9,6,5,3,2,1,0)&22\\
&&&&(19,12,10,6,3,1,-1,-3)&\\\hline
20&27&5&11&(20,19,15,14,10,8,5,3,0)&10\\
&&&&(20,15,10)&\\\hline
21&35&8&13&(21,20,16,15,13,12,8,7,5,4,3,2,0)&21\\
&&&&(21,16,13,8,5,3,0)&\\\hline
21&38&9&17&(21,20,17,16,12,11,8,7,4,2,0)&17\\
&&&&(21,17,12,8,4)&\\\hline
\end{array}$
\caption{The list of $K_{p,k}$ with $p\le 21$.}
\label{nonzeroL1}
\end{center}
\end{table}
\begin{table}[htbp]
\begin{center}
$\begin{array}{|c|c|c|c|c|c|c|}\hline
g&p&k&k_2&NS_h&\alpha\\\hline
22&35&3&12&(22,21,19,18,16,15,13,12,10,9,7,6,4,3,1,0)&23\\\hline
24&16&7&7&(24,23,17,16,15,14,10,9,8,7,6,5,3,2,1,0)&27\\\hline
24&32&7&9&(24,23,17,16,15,14,10,9,8,7,6,5,3,2,1,0)&27\\\hline
24&33&5&13&(24,23,19,18,14,13,11,10,9,8,6,5,4,3,1,0)&25\\\hline
24&35&11&16&(24,23,13,12,11,10,8,7,5,4,2,1,0)&26\\\hline
24&37&3&12&(24,23,21,20,18,17,15,14,12,11,9,&24\\
&&&&8,6,5,3,2,0)&\\\hline
25&37&13&17&(25,24,14,13,12,11,8,7,5,4,3,2,1,0)&30\\\hline
25&43&9&19&(25,24,20,19,16,15,11,10,7,5,2,0)&18\\\hline
26&42&11&19&(26,25,22,21,18,17,15,14,11,10,7,6,4,2,0)&22\\\hline
26&47&5&19&(26,25,21,20,16,15,11,10,7,5,2,0)&19\\\hline
28&44&3&15&(28,27,25,24,22,21,19,18,16,15,13,12,10,9,&29\\
&&&&7,6,4,3,1,0)&\\\hline
28&44&7&19&(28,27,21,20,16,15,14,13,9,8,&25\\
&&&&7,6,3,1,0)&\\\hline
28&44&13&17&(28,27,18,17,15,14,11,10,8,&26\\
&&&&7,5,4,2,0)&\\\hline
29&45&7&13&(29,28,22,21,16,14,9,7,3,0)&13\\\hline
29&55&16&24&(29,28,22,21,15,14,13,12,8,7,6,4,1,0)&23\\\hline
30&39&7&11&(30,29,23,22,19,18,16,15,12,11,9,7,5,4,2,0)&21\\\hline
30&43&15&20&(30,29,15,14,13,12,10,9,7,6,4,3,2,1,0)&34\\\hline
30&46&3&15&(30,29,27,26,24,23,21,20,18,&30\\
&&&&17,15,14,12,11,9,8,6,5,3,2,0)&\\\hline
30&53&3&18&(30,29,25,24,20,19,15,14,10,8,5,3,0)&20\\\hline
30&58&7&25&(30,29,23,22,16,15,14,13,9,8,7,6,5,4,2,1,0)&32\\\hline
\end{array}$
\caption{Non-zero sequences of $K_{p,k}$ in non-L-space homology spheres up to $22\le p\le 30$.}
\label{nonzeroL2}
\end{center}
\end{table}

\noindent
Motoo Tange\\
University of Tsukuba, \\
Tennodai 1-1-1 Tsukuba, Ibaraki 305-8571, Japan. \\
tange@math.tsukuba.ac.jp

\end{document}